\newtheorem{theorem}{Theorem}[section]
\newtheorem{lemma}[theorem]{Lemma}
\newtheorem{proposition}[theorem]{Proposition}
\theoremstyle{definition}
\newtheorem{definition}[theorem]{Definition}
\newtheorem{remark}[theorem]{Remark}
\newtheorem{example}[theorem]{Example}
\DeclareMathOperator{\rk}{rk}
\DeclareMathOperator{\dlog}{dlog}
\newcommand{\dd}{\textrm{d}}
\newcommand{\Z}{\mathbb{Z}}
\newcommand{\Q}{\mathbb{Q}}
\newcommand{\C}{\mathbb{C}}
\newcommand{\A}{\mathcal{A}}
\newcommand{\exA}{\mathcal{B}}
\newcommand{\poset}{\mathcal{L}}
\newcommand{\oomega}{\overline{\omega}}
\newcommand{\oeta}{\overline{\eta}}
\DeclareMathOperator{\gr}{gr}
\DeclareMathOperator{\FF}{\mathcal{F}}
\newcommand{\uno}{\mathbf{1}}
\newcommand{\bj}{\mathbf{j}}
\newcommand{\bk}{\mathbf{k}}
\newcommand{\bm}{\mathbf{m}}
\newcommand{\bn}{\mathbf{n}}
\newcommand{\cf}{cf.\ }
\newcommand{\ie}{i.e.\ }
\def\MR#1{}
\def\XE{\Lambda(E)}
\begin{document}
	
\title[OS-type presentations for the cohomology of toric arrangements]{Orlik-Solomon-type presentations for the cohomology algebra of toric arrangements}

\author[F. Callegaro]{Filippo Callegaro}
\address{Filippo Callegaro \newline Universit\'a di Pisa\\ Dipartimento di Matematica\\Largo Bruno Pontecorvo 5, 56127 Pisa\\ Italia}\email{callegaro@dm.unipi.it}

\author[M. D'Adderio]{Michele D'Adderio}
\address{Michele D'Adderio \newline Universit\'e Libre de Bruxelles (ULB)\\D\'epartement de Math\'ematique\\ Boulevard du Triomphe, B-1050 Bruxelles\\ Belgium}\email{mdadderi@ulb.ac.be}

\author[E. Delucchi]{Emanuele Delucchi}
\address{Emanuele Delucchi \newline Universit\'e de Fribourg\\D\'epartement de math\'ematiques\\ Chemin du Mus\'ee 23, CH-1700 Fribourg\\ Switzerland}\email{emanuele.delucchi@unifr.ch}

\author[L. Migliorini]{Luca Migliorini}
\address{Luca Migliorini \newline Universit\'a di Bologna\\Dipartimento di Matematica\\ Piazza di Porta S.Donato 5, Bologna\\ Italia}\email{luca.migliorini@unibo.it}

\author[R. Pagaria]{Roberto Pagaria}
\address{Roberto Pagaria \newline Scuola Normale Superiore\\ Piazza dei Cavalieri 7, 56126 Pisa\\ Italia}\email{roberto.pagaria@sns.it}

\begin{abstract}

We give an explicit presentation for the integral cohomology ring of the complement of any arrangement of level sets of characters in a complex torus (alias ``toric arrangement''). Our description parallels the one given by Orlik and Solomon for arrangements of hyperplanes, and builds on De Concini and Procesi's work on the rational cohomology of unimodular toric arrangements. As a byproduct we extend Dupont's rational formality result to  formality over $\mathbb Z$.

The data needed in order to state the presentation of the rational cohomology is fully encoded in the poset of connected components of intersections of the arrangement.	
\end{abstract}

\maketitle


\tableofcontents

\section{Introduction}

The topology of the complement of an arrangement of hyperplanes in a complex vector space is a classical subject, whose study received considerable momentum form early work of Arnol'd and Brieskorn (e.g., \cite{Arnold1969,Brieskorn1973}) motivated by applications to the theory of braid groups and of configuration spaces. A distinguishing trait of this research field is the deep interplay between the topological and geometric data and the arrangement's combinatorial data, here usually understood to be the arrangement's {\em matroid}, a combinatorial abstraction of the 
linear dependencies among the hyperplanes' defining forms.
%
%
A milestone in this direction is the presentation of the complement's integral cohomology algebra given by Orlik and Solomon \cite{OrlikSolomonOriginal}, building on work of Arnol'd and Brieskorn. As we will explain below, this presentation \
is fully determined
by the combinatorial (matroid) data and thus such an algebra can be associated with any matroid. Over the years, {\em Orlik-Solomon algebras} of general matroids have attracted interest in their own right \cite{YuzvinskiSurvey2001}.

\medskip

In the wake of De Concini, Procesi and Vergne's work  on the connection between partition functions and splines \cite{deConciniProcesiVergne} 
came a renewed interest in the study of complements of arrangements of subtori in the complex torus -- a class of spaces which had already been considered by Looijenga in the context of moduli spaces \cite{Looijenga}. Following \cite{DeConciniProcesiToricArr} we call such objects {\em toric arrangements}. In \S\ref{ss:HA-OS} below we will briefly outline the state of the art on the topology of toric arrangements. 
This research direction was spurred particularly by the seminal work of De Concini and Procesi \cite{DeConciniProcesiToricArr} which foreshadowed 
as rich an interplay between topology and combinatorics as is the case for hyperplane arrangements.

A crucial aspect that emerged in \cite{DeConciniProcesiToricArr} and was confirmed by subsequent research in the topology of toric arrangements is that the matroid data naturally associated with every toric arrangement is not fine enough to encode meaningful geometric and topological invariants of the arrangement's complement. 
The quest for a suitable enrichment of matroid theory has been pursued from different points of view, i.e., by modeling the algebraic-arithmetic structure of the set of characters defining the arrangement \cite{DM,BM,FinkMoci}   or by studying the properties of the pattern of intersections \cite{DelucchiRiedel,Pagaria2017} (see \S\ref{ss:TA-TOP} and \S\ref{sec:AM}).

\medskip

In this paper we provide an Orlik-Solomon type presentation for the cohomology algebra of an arbitrary toric arrangement, generalizing De Concini and Procesi's work on the unimodular case.
Our presentation with rational coefficients is fully determined by the intersection pattern.
This presentation holds also for the integral cohomology algebra, but, in this case, it is not determined by the intersection pattern.
In order to be able to state our results we provide some background.






%
%
%

\subsection{Arrangements of hyperplanes and Orlik-Solomon Algebras}\label{ss:HA-OS}
We start with a \emph{(central) hyperplane arrangement}, \ie a finite set  $\mathcal A = \{H_\lambda\}_{\lambda \in E}$ of codimension one linear subspaces in a complex vector space $V\simeq \mathbb{C}^n$.
The space $M(\mathcal A):= V\setminus \cup \mathcal A$ is in a natural way an affine complex variety, hence its cohomology (over $\mathbb C$) is computed by the {\em algebraic} de Rham complex, as
the quotient of the group of closed algebraic forms modulo that of exact algebraic ones (by Grothendieck's algebraic de Rham theorem \cite{Grothendieck}).

We choose vectors 
$\{a_\lambda \}_{\lambda \in E} \subset V^*$ such that $H_\lambda = \operatorname{ker} a_\lambda$ and consider the free exterior algebra $\Lambda_E$ over $\Z$ generated by the symbols $\{e_\lambda \}_{\lambda\in E}$. In $\Lambda_E$ we define an ideal 
as follows: for every  subset $A:=\{a_{\lambda_1}, \cdots ,a_{\lambda_r}\} \subset \{a_\lambda \}_{\lambda\in E}$  of linearly {\em dependent} vectors, we set
\[
\partial e_A:=\sum_{i=1}^r(-1)^{i-1}e_{\lambda_1}\cdots \widehat{e_{\lambda_i}}\cdots
e_{\lambda_r}
\]
and let $J_E$ be the ideal generated by the $\partial e_A$'s, where $A$ runs over all linearly dependent subsets of $E$. 


The quotient algebra $\Lambda_E / J_E$
is called the \emph{Orlik-Solomon algebra} of the arrangement.
The theorem of Orlik and Solomon states that the map $\Lambda_E \to H^*\left(M(\mathcal A), \Z\right)$ sending
$e_\lambda$ to the differential form $\frac{1}{2\pi i}\dlog a_\lambda$ factors to an algebra isomorphism
\begin{equation*}
	\Lambda_E / J_E \stackrel{\simeq}{\longrightarrow} H^*\big{(}V \setminus \bigcup_{\lambda \in E} H_\lambda, \Z\big{)}.
\end{equation*}
Two consequences of this fact are:
\begin{enumerate}
	\item 
	$H^*\left(M(\mathcal A), \Z \right)$ is generated in degree one;
	\item
	the integral ring structure depends only on the structure of the family of linearly dependent subsets of $\{a_\lambda\}_{\lambda \in E}$.
\end{enumerate}

As we will explain more precisely in  \Cref{ss:matroids}, the combinatorial data of the family of linearly dependent subsets of $E$ 
is encoded in the arrangement's {\em matroid}. Thus, item (2) above can be rephrased by saying that the integral ring structure depends only on the matroid or equivalently, using a basic fact in matroid theory, that it depends only on the partially ordered 
set 
\begin{equation}
	\mathcal L(\mathcal A) :=\{\cap \mathcal B \mid \mathcal B \subseteq \mathcal A \} \tag{*}
\end{equation}
of all intersections of hyperplanes, ordered by reverse inclusion \cite[\S 2.1]{OrlikTeraoBook}.




The construction of $\Lambda_E/J_E$ can be formally carried out for every abstract matroid, hence with every matroid is associated an \emph{Orlik-Solomon algebra}, and this class of algebras enjoys a rich structure theory (see \cite{YuzvinskiSurvey2001} for a survey).
For instance, the matroid's Whitney numbers of the first kind count the dimensions of the algebra's graded pieces (hence, in the case of arrangements, the Betti numbers of the complement), and generating functions for these numbers can be obtained from classical polynomial invariants of matroids (e.g., the Tutte polynomial).



\subsection{Toric arrangements}\label{ss:TA-TOP}

A \emph{toric arrangement} is a finite  set $\mathcal A$ of 
codimension one 
subtori
in a complex torus $T\simeq (\mathbb{C^*})^n$.
The topological object of interest is, again, the complement $M(\mathcal A):=T\setminus \cup \mathcal A$.
%
%
Each 
such 
subtorus 
can be defined as a coset of the kernel of some character of $T$. 
The arrangement is called \emph{central} if every subtorus is the kernel of a certain character.
If we fix one such defining character for every subtorus in $\mathcal A$ we can consider the matroid of linear dependencies among the resulting set of characters (e.g., viewed as a family of elements of the vector space obtained by tensoring the lattice of characters by $\mathbb Q$). This matroid does not depend on the choice of the characters.

Even to encode basic topological data such as the Betti numbers of the arrangement's complement, this ``algebraic'' matroid data must be refined, for instance by some
``arithmetic'' data given by the multiplicity function 
which keeps track of
the index of sublattices spanned by subsets of the characters.
This approach goes back to Lawrence \cite{Lawrence}. An axiomatization of some crucial properties of this function is the foundation of the theory of {\em arithmetic matroids} \cite{DM,BM}. By \cite{DeConciniProcesiToricArr} and via Moci's arithmetic Tutte polynomial \cite{MociTutte2012}, the Betti numbers of the complement of a 
central
toric arrangement can be computed from the associated arithmetic matroid.

Since intersections of subtori can be topologically disconnected, the ``geometric" intersection data of a toric arrangement is customarily taken to be the poset of {\em layers}, i.e., connected components of intersections (see \Cref{def:posetoflayers}). The significance of this poset was already pointed out by Zaslavsky \cite{Zaslavsky1977}. 
Using the results in this paper Pagaria in \cite{pagaria2} has shown that the integral cohomology algebra of the complement of a toric arrangement is not determined by the poset of layers and that the rational cohomology algebra is not determine by the arithmetic matroid (however it is determined by the poset of layers).
For an in-depth discussion of this question see also \cite{Pagaria2017}. 
The paper \cite{DelucchiRiedel} 
introduces
{\em group actions on semimatroids} 
as an attempt for a unified axiomatization of posets of layers and multiplicity functions.

\medskip

The line of research leading to the present work starts with \cite{DeConciniProcesiToricArr} where a general result about the Betti numbers of the complement 
was obtained (see \Cref{lm:poincare}). Combinatorial models for the homotopy type of complements of toric arrangements were studied in \cite{MociSettepanella,dAntonioDelucchi1}, and minimality of such spaces was proved in \cite{dAntonioDelucchiJEMS}. De Concini and Gaiffi recently computed the cohomology of certain compactifications of $M(\mathcal A)$ \cite{DeCG1,DeCG2}, see also \cite{MociWM} for related earlier work.

%
Algebraic (rational) models for the associated graded of the rational cohomology
of $M(\mathcal A)$ were developed by Bibby \cite{Bibby2016} and Dupont \cite{DupontHypersurfaces2015}, and the minimality result of \cite{dAntonioDelucchiJEMS} implies torsion-freeness of the integral cohomology. Dupont also proved rational formality of $M(\mathcal A)$ in \cite{DupontFormality2016}. Further related work includes results about representation stability \cite{Bibby2} and local system cohomology \cite{DSY}.

Presentations of the graded rational algebra were discussed in \cite{Bibby2016} and further described in \cite{Pagaria2017}, where the dependency of these algebras from the combinatorial data of the poset of layers has been investigated in depth.

The integral cohomology algebra was considered in \cite{CallegaroDelucchi2017} using purely combinatorial methods, but we point out that the formulas for the multiplication given there contain a mistake (see \cite{CallegaroDelucchiErratum}). Here we take a different point of view. 
In particular, we obtain 
a presentation for the cohomology ring $H^*(M(\mathcal A),\mathbb C)$ that can be seen as generalizing the one obtained for hyperplanes by Orlik-Solomon. In the {\em unimodular} case, i.e., when all the intersections of hypertori are connected,
we recover the presentation that had been obtained in \cite{DeConciniProcesiToricArr}.

\subsection{Results} In this paper we provide Orlik-Solomon type presentations for the integral cohomology algebra of a general 
toric arrangement. 

More precisely, 
\begin{itemize}
	\item We generalize De Concini and Procesi's presentation beyond the unimodular case, to 
	all
	toric arrangements (\Cref{thm:main_rational}).
	In the general case this algebra is not necessarily generated in degree one, and every minimal linear dependency among characters induces a number of relations equal to the number of connected components of the intersection of the involved characters (the case where every such dependency induces one relation is precisely the unimodular one studied by De Concini and Procesi). 
	
	\item We prove that the forms we choose as generators of the cohomology are integral. 
	Moreover the relations involved in our presentation hold as relation of forms, not only of cohomology classes.
	Thereby we extend Dupont's result of rational formality  to integral formality, and we obtain an Orlik-Solomon type presentation for the integral cohomology algebra as well (\Cref{thm:main_integral}).
	\item The data needed for the presentation of the rational cohomology is fully encoded in the poset of connected components of intersections (\Cref{combdata}) and, thus, in the $G$-semimatroid associated to the arrangement \cite{DelucchiRiedel}. Moreover, an example due to Pagaria shows that the cohomology ring structure cannot be recovered from the associated arithmetic matroid or from the associated matroid over $\Z$ (see \Cref{rem:matroid_vs_poset} and \cite{pagaria2}).  
\end{itemize}

\subsection{Plan}
The plan of the paper is as follows: First, in \Cref{s:first} we recall a few definitions related to the topology and combinatorics of toric arrangements, and introduce our choice of logarithmic forms associated with the arrangement's elements.
In \Cref{s:some_formal_identities} we start from De Concini and Procesi's work and deduce some formal identities associated with minimal dependencies among the arrangement's defining characters. The technical tool towards treating the non-unimodular case are certain coverings of toric arrangements introduced in \Cref{s:coverings}. Then, in \Cref{s:separation} we put this tool to work and single out a special class of coverings (which we call ``separating covers''). These coverings allow us to define some fundamental forms accounting for the single contributions in cohomology associated with different components of the same intersection.
In \Cref{s:rational_cohomology} we prove that these forms generate the cohomology algebra and the relations generate the whole relation ideal. Finally, in \Cref{sec:integer} we extend our results to integral homology.

\subsection{Acknowledgements} Emanuele Delucchi was supported by the Swiss National Science Foundation professorship grant PP00P2\_150552/1.
Filippo Callegaro and Luca Migliorini were supported by PRIN 2015 ``Moduli spaces and Lie theory'' 2015ZWST2C - PE1.

\section{Basic definitions and notations} \label{s:first}

\subsection{Generalities}
Throughout, $E$ will denote a finite set. For indexing purposes, we will fix an arbitrary total ordering $<$ of $E$ (e.g., by identifying it with a subset of $\mathbb N$). We will also follow the following conventions: we will consider every subset of $E$  to be ordered with the induced ordering. For $A,B\subseteq E$, we will write $(A,B)$ for the concatenation of the two totally ordered sets, \ie if $A=\{a_i<\cdots < a_l\}$ and $B=\{b_i<\cdots < b_h\}$, then $(A,B)=(a_1,a_2,\ldots,a_l,b_1,\ldots,b_h)$, which is typically different from $A \cup B$.

\begin{definition} \label{def:ell}
	Given $A,B\subseteq E$, let $\ell({A,B})$ denote the length of the permutation that takes $(A,B)$ into $A\cup B$.
\end{definition}

\subsection{Toric arrangements}\label{sec:TA}

Let $T=(\mathbb{C}^*)^d$ be a complex torus, and let $\Lambda$ be the lattice of characters of $T$. Consider a list $\underline{\chi}\in \Lambda^{|E|}$ of primitive elements of $\Lambda \simeq H^1(T,\Z)$ and a tuple $\underline{b}\in (\mathbb C^*)^{|E|}$. The \emph{toric arrangement} defined by $\underline{\chi}$ and $\underline{b}$ is 
\[\mathcal{A}=\{H_i\mid i\in E\},\] where $H_i=\chi_i^{-1}(b_i)$ is the level set of $\chi_i$ at level $b_i$, for all $i\in E$.

The toric arrangement is called {\em central} if $\underline{b}=(1,\ldots,1)$, i.e., if $H_i$ is the kernel of $\chi_i$ for all $i\in E$.

\begin{remark}\label{rem:matrix}
	Once an isomorphism of $\Lambda$ with $\mathbb Z^d$ is fixed, for every subset $A\subseteq X$ we can associate the integer $d\times \vert A \vert$ - matrix $[A]$ whose columns are the characters in $A$, say in the fixed ordering of $E$.
\end{remark}

\begin{definition}\label{def:complementaretorico}
	We define $M(\mathcal{A})\subset T$ to be the complement of the toric arrangement $\mathcal{A}$, \ie
	\[
	M(\mathcal{A}):=T\setminus \bigcup_{H \in \A}H.
	\]
\end{definition}

\begin{definition}
	The toric arrangement $\A$ is called \textit{unimodular} if $\cap_{i\in A} H_i$ is either connected or empty for all $A\subseteq E$.
\end{definition}

\begin{definition}\label{def:posetoflayers}
	For a given arrangement $\A$ in a torus $T$ we define the \emph{poset of layers}  $\poset(\A)$ as the set of all connected components of nonempty intersections of elements of $\A$ ordered by reverse inclusion. 
	The elements of $\poset(\A)$ are called \emph{layers} of the arrangement $\A$.
\end{definition}
Notice that the torus $T$  is an element of $\poset(\A)$ since it is the intersection of the empty family of hypertori.
\begin{definition}\label{def:essential}
	The toric arrangement $\A$ is called \emph{essential} if the maximal elements in $\poset(\A)$ are points.
\end{definition}
\begin{remark}\label{rmk:essential}
If $\A$ is 
not essential, all maximal layers in $\poset(\A)$ are translates of the same torus subgroup $W$ of $T$. 
This follows from the classical theory of hyperplane arrangements by applying \cite[Lemma 5.30]{OrlikTeraoBook} to the lifting of $\A$ in the universal covering of $T$.
By choosing any direct summand $T'$ of  $W$ in $T$ we can decompose the ambient torus as $T = W \times T'$. Hence, if we call $\A' = \{H \cap T' \mid H \in \A\}$ the arrangement induced by $\A$ in $T'$, we have that $\A'$ is essential and $M(\A') = M(\A)/W$. Moreover $M(\A) = W \times M(\A')$.
\end{remark}
\begin{definition}\label{def:localarrangement}
	Given a toric arrangement $\A$ in $T$ and a point $p \in T$ we define the linear arrangement $\A[p]$ in the tangent space $\operatorname{T}_p(T)$ as the arrangement given by the hyperplanes $\operatorname{T}_p(H)$ for all $H \in \A$ such that $p \in H$ (cp.\ \S\ref{ss:HA-OS}, and see \cite{OrlikTeraoBook} for background on hyperplane arrangements).
	
	For a given layer $W$ of $\A$, a point $p \in W$ is generic if for any $H \in \A$ such that $W \not \subseteq H$ we have that $p \notin H$.
	We define the linear arrangement $\A[W]$ as the hyperplane arrangement $\A[p]$ for a generic point $p \in W$. 
\end{definition}
\begin{remark}
	Notice that the arrangement  $\A[W]$ does not depend on the choice of the generic point $p$.
\end{remark}

\begin{example} \label{ex:primo}
	Let $x,y$ be the coordinates on the $2$-dimensional torus $T$. We consider the arrangement $\exA$ in $T=(\C^*)^2$ given by the following hypertori:
	\begin{align*}
		H_0:=& \{x^3y=1\};\\
		H_1:=& \{y=1 \};\\
		H_2:=& \{x=1 \}.
	\end{align*}
	Notice that $H_1$ and $H_2$ as well as $H_2$ and $H_0$ intersect in a single point $p= (1,1)$, while $H_1$ and $H_0$ intersect in three points: $p, q= (e^{\frac{2 \pi i}{3}},1), r= (e^{\frac{4 \pi i}{3}},1).$
	
	We can identify the group of characters $\Lambda$ with $\Z^2$ generated by $\chi_1= (0,1)$, $\chi_2=(1,0)$. Hence $y = e^{\chi_1}, x = e^{\chi_2}$ and the hypertorus $H_0$ is associated with the character $\chi_0 = \chi_1 + 3 \chi_2$. 
	
	The intersection of $\exA$ with the compact torus is represented in Figure \ref{fig:esempio}.
	\begin{figure}\label{fig:esempio}
		\begin{tikzpicture}[scale=0.5]
		\draw[gray] (0, 0) rectangle (6,6);
		\draw [line width=1.5pt] (0,3) -- node[below, shift={(0.5,0.05)}]{$H_1$} (6,3);
		\draw [line width=1.5pt] (1,0) -- node[left, shift={(0.1,1)}]{$H_2$} (1,6);
		\draw [line width=1.5pt](0,0) -- (2,6);
		\draw [line width=1.5pt](2,0) -- node[right, shift={(-0.35,1)} ]{$H_0$} (4,6);
		\draw [line width=1.5pt](4,0) -- (6,6);
		\draw (1,3) node [circle, draw, fill=black, inner sep=0pt, minimum width=4pt,label={[shift={(-0.2,0)}]$p$}] {};
		\draw (3,3) node [circle, draw, fill=black, inner sep=0pt, minimum width=4pt,label={[shift={(-0.2,0)}]$q$}] {};
		\draw (5,3) node [circle, draw, fill=black, inner sep=0pt, minimum width=4pt,label={[shift={(-0.2,0)}]$r$}] {};
		\end{tikzpicture}
		\caption{A picture of the arrangement $\exA$.}
	\end{figure}
	Along this paper we will use this arrangement as a running example for the definitions and results that we introduce.
	
	We identify the tangent space $\operatorname{T}_p(T)$ with $\C^2$, with coordinates $\bar x, \bar y$. The local arrangement $\exA[p]$ is given by the hyperplanes with equations $3\bar x+\bar y=0$, $\bar y=0$, $\bar x=0$, while the local arrangement $\exA[q]$ has equations $3\bar x+\bar y=0$, $\bar y=0$.
\end{example}


\subsection{Matroids}\label{ss:matroids} As elements of the vector space $\mathbb Q\otimes \Lambda$, the characters $\chi_i$ determine linear dependency relations. The family
\[
\mathcal C :=\min_{\subseteq}\{C\subseteq E \mid \{\chi_i\}_{i\in C} \textrm{ is a linearly dependent set}\}
\]
of index sets of minimal linear dependencies among characters in $X$ is the set of \emph{circuits} of a {\em matroid} $\mathcal M$ on the set $E$. In the following we will not need specifics about abstract matroids, hence we point to \cite{OxleyBook} for an introduction to this theory.  In general, we will speak of linearly dependent or independent subsets of $E$ referring to dependencies of the corresponding characters.

\def\nbc{\operatorname{nbc}}

\begin{definition}\label{df:nbc}
	Recall the fixed total ordering of $E$. A {\em broken circuit} of the matroid $\mathcal M$ is any subset of $E$ of the form $C\setminus \min{C}$ where $C$ is a circuit.
	
	A {\em no-broken-circuit set} (or {\em nbc}-set) is any subset of $E$ that does not contain any broken circuit. The collection of all nbc sets is denoted $\nbc (\A)$ (or $\nbc(\mathcal M)$ if we want to stress the dependency from the matroid). 
\end{definition}

\begin{remark}
	
	Every nbc-set is necessarily independent.
	
\end{remark}

\subsection{Arithmetic matroids}\label{sec:AM} There is additional enumerative data to be garnered from the set $E$, when this is viewed as a subset of the lattice $\Lambda$. In particular, to every subset $A\subseteq E$ we can associate its {\em span} $\Lambda_A:=\langle A \rangle \subseteq \Lambda$ and a lattice $\Lambda^A:= (\mathbb Q\otimes_{\mathbb Z} \Lambda_A) \cap \Lambda$. 

The function
\[
m: 2^E \to \mathbb N, \quad A\mapsto [\Lambda^A  : \Lambda_A]
\]
that associates to every subset $A$ of $E$ the index of the $\Lambda_A$ in $\Lambda^A$ is called the \emph{multiplicity} of $A$. Equivalently, $m(A)$ is the cardinality of the torsion subgroup of the quotient $\Lambda/\Lambda_A$.


\begin{remark}\label{rem:multiplicity}$\,$
	\begin{enumerate}[label=(\alph*)]
		\item
		If $\A$ is a toric arrangement, then for all $A\subseteq E$ the integer $m(A)$ is the number of connected components of the intersection
		$\bigcap_{\chi_i \in A} H_i$ when this intersection is non-empty (\cf \cite[Lemma~5.4]{MociTutte2012} and \cite[Section 2]{Lawrence2}).
\item Unimodularity of the list $E$ is equivalent to $m$ being constant equal to $1$, and is equivalent to unimodularity of the arrangement $\mathcal A$.
		\item\label{qui} Given a matrix representation as in Remark \ref{rem:matrix}, the number $m(A)$ equals the product of the elementary divisors of $[A]$, i.e., the greatest common divisor of all its minors with size equal to the rank of $[A]$ (\cf 
\cite[\S 16.B]{CurtisReiner}). If $[A]$ is a non-singular square matrix, then $m(A)=\vert \det [A] \vert$.
	\end{enumerate}
\end{remark}

\begin{lemma}\label{lem:arcirc}
	If $C$ is a circuit, then the following relation holds:
	\begin{equation}\label{eq:circuito}
		\sum_{i \in C} {c}_i m(C \setminus \{i\}) \chi_i = 0
	\end{equation}
	where ${c}_i\in \{1,-1\}$ for all $i$.
\end{lemma}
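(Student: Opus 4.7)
\medskip
\noindent\textbf{Proof proposal.} The plan is to exhibit the relation as an integer multiple of a primitive kernel generator, then match up the coefficients via lattice indices.

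First, since $C$ is a circuit, the characters $\{\chi_i\}_{i \in C}$ are minimally linearly dependent. Consider the homomorphism $\varphi: \mathbb{Z}^C \to \Lambda$ sending the $i$-th basis vector $e_i$ to $\chi_i$. Its image is $\Lambda_C$, which has rank $|C|-1$, so $\ker \varphi$ is a saturated rank-$1$ subgroup of $\mathbb{Z}^C$. Let $v = (v_i)_{i \in C} \in \mathbb{Z}^C$ be a primitive generator of $\ker \varphi$; since $C$ is a circuit, every proper subset $C \setminus \{i\}$ is independent, so every coordinate $v_i$ is nonzero. Set $c_i := \operatorname{sign}(v_i) \in \{+1,-1\}$.

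Next, I would compute $|v_i|$ in terms of multiplicities. The key observation is that $\Lambda^{C \setminus \{i\}} = \Lambda^C$ for every $i \in C$: both are the saturation in $\Lambda$ of the same rational subspace $\mathbb{Q} \otimes_{\mathbb{Z}} \Lambda_C = \mathbb{Q} \otimes_{\mathbb{Z}} \Lambda_{C \setminus \{i\}}$ (the equality holds because $C\setminus\{i\}$ has full rank $|C|-1$ in $\Lambda_C$). From the exact sequence
\[
0 \to \mathbb{Z}v \to \mathbb{Z}^C \xrightarrow{\varphi} \Lambda_C \to 0
\]
and the fact that $\mathbb{Z}^C / \mathbb{Z}^{C \setminus \{i\}} \cong \mathbb{Z}$ (generated by the image of $e_i$, on which $v$ projects to $v_i$), we deduce $[\Lambda_C : \Lambda_{C \setminus \{i\}}] = |v_i|$. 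Combining with $\Lambda^{C \setminus \{i\}} = \Lambda^C$ gives
\[
m(C \setminus \{i\}) \;=\; [\Lambda^C : \Lambda_C] \cdot [\Lambda_C : \Lambda_{C \setminus \{i\}}] \;=\; m(C)\,|v_i|.
\]

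Finally, the relation $\sum_{i \in C} v_i \chi_i = 0$ holds by construction, so multiplying by $m(C)$ yields
\[
\sum_{i \in C} c_i\, m(C \setminus \{i\})\, \chi_i \;=\; m(C) \sum_{i \in C} v_i \chi_i \;=\; 0,
\]
as desired. I expect the main (and essentially only) technical point to be the identification $\Lambda^{C \setminus \{i\}} = \Lambda^C$ and the index computation $[\Lambda_C : \Lambda_{C \setminus \{i\}}] = |v_i|$; once one sets up the exact sequence determined by the primitive kernel vector, everything else is formal. An alternative route would apply Cramer's rule to a maximal-rank $(|C|-1) \times (|C|-1)$ minor of the matrix $[C]$ (cf.\ Remark~\ref{rem:multiplicity}\ref{qui}), but then one must still divide out the GCD of minors, which is precisely the lattice-index computation above in disguise.
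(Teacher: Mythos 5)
Your proof is correct, and it takes a cleaner, more lattice-theoretic route than the paper's. The paper argues via determinants: it picks any dependency $\sum_{i\in C} k_i\chi_i = 0$, restricts to $\Lambda^C$ so that each $[C\setminus\{j\}]$ becomes a square matrix with $m(C\setminus\{j\}) = |\det[C\setminus\{j\}]|$, and uses multilinearity to derive the Cramer-type identity $k_i\det[C\setminus\{j\}] = (-1)^{i+j}k_j\det[C\setminus\{i\}]$, from which $k_i$ is proportional to $\pm\det[C\setminus\{i\}]$. Your version replaces the determinant calculation with the index computation $[\Lambda_C : \Lambda_{C\setminus\{i\}}] = |v_i|$ coming from the exact sequence $0 \to \mathbb{Z}v \to \mathbb{Z}^C \to \Lambda_C \to 0$ and the projection onto the $i$-th coordinate. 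Both arguments hinge on the same observation — that $\Lambda^{C\setminus\{i\}} = \Lambda^C$ for all $i\in C$, so all the multiplicities $m(C\setminus\{i\})$ are indices in a common ambient lattice — but you make it explicit, whereas the paper buries it in the phrase ``restrict to the sublattice $\Lambda^C$.'' What your approach buys: it isolates the role of the primitivity/saturation of the kernel, it avoids choosing a basis of $\Lambda^C$, and it produces the stronger equality $m(C\setminus\{i\}) = m(C)\,|v_i|$ as a byproduct, which the determinant proof obtains only up to the common scalar $\lambda$. The paper's route is shorter if one is already comfortable with the convention that $[C\setminus\{i\}]$ denotes a square matrix over $\Lambda^C$; your route is more self-contained. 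Either way, the content is the same.
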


\begin{proof} By definition, as $C$ is minimally dependent, there are nonzero integers $k_i$ such that $\sum_{i\in C} k_i\chi_i = 0$. Moreover, in order to compute the values of the function $m$ it is enough to restrict to the sublattice $\Lambda^C$. By linearity of the determinant function, we have
	\[
	k_i\det [C \setminus \{j\}] = (-1)^{i+j} k_j \det [C \setminus \{i\}] \quad\textrm{ for all } i,j\in C.
	\]
	Now, elementary manipulation shows that \eqref{eq:circuito} holds, e.g., with $c_i=\operatorname{sgn}(k_i)$.

	
\end{proof}

\begin{remark}\label{rm:cipl}
	By \cite[Theorem 3.12]{Pagaria2017}, the coefficients of all relations \eqref{eq:circuito} (and, in particular, the signs $c_i$) depend only on the poset of layers $\mathcal L(\mathscr A)$.
\end{remark}

\begin{remark}\label{rm:unim_pm1}
	If the arrangement is unimodular, from Lemma \ref{lem:arcirc} we garner that every circuit can be realized by a minimal linear dependency all whose coefficients are integer units. 
\end{remark}

The following Lemma is essentially proved in \cite[Thm.~4.2]{DeConciniProcesiToricArr}.

\begin{lemma}\label{lm:poincare}
	If $\A$ is any toric arrangement in a torus $T$ of dimension $d$, the Poincar\'e polynomial of the complement $M(\A)$ is given in terms of the nbc-sets and the multiplicity function as
	\[
	\operatorname{Poin}(M(\A),t)=\sum_{j=0}^d N_j (t+1)^{d-j}t^j,
	\]
	where, for $j=0,\ldots, d$,
	\[
	N_j:= \sum_{L\in \mathcal C_j} \vert \nbc_j(\A[L])\vert.
	\]
	and $\nbc_j(\A[L])$ is the set of no-broken-circuits of cardinality $j$ in the arrangement $\A[L]$.
	In particular, the $j$-th Betti number of $M(\A)$ is
	\[
	\beta_j(M(\A)) = \sum_{i=0}^j N_i \binom{d-i}{j-i}.
	\]
\end{lemma}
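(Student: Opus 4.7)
The plan is to transcribe De Concini and Procesi's Poincaré polynomial formula into the form claimed. Their Theorem~4.2 yields a formula of the shape
\[
\operatorname{Poin}(M(\A),t) \;=\; \sum_{L \in \mathcal L(\A)} |\mu(T,L)| \, t^{\operatorname{codim} L} (1+t)^{d-\operatorname{codim} L},
\]
where $\mu$ denotes the Möbius function of the poset of layers. Collecting layers by codimension reduces the statement to the single point-wise identity
\[
|\mu(T, L)| \;=\; |\nbc_j(\A[L])| \qquad \text{for every } L \in \mathcal C_j,
\]
after which the Betti number formula is obtained merely by extracting the coefficient of $t^j$:
\[
\beta_j(M(\A)) \;=\; [t^j]\left( \sum_{i=0}^d N_i\, t^i (1+t)^{d-i} \right) \;=\; \sum_{i=0}^j N_i \binom{d-i}{j-i}.
\]

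The one real step is thus the Möbius-versus-nbc identification. I would first set up a poset isomorphism between the interval $[T,L]$ in $\mathcal L(\A)$ and the intersection lattice of the local hyperplane arrangement $\A[L]$ (cf.\ \Cref{def:localarrangement}). Pick a generic point $p\in L$: by genericity, a layer $L'\in \mathcal L(\A)$ satisfies $L'\supseteq L$ if and only if $p\in L'$, and the assignment $L'\mapsto \operatorname{T}_p L'$ is then an order-preserving bijection onto the intersection lattice of $\A[L]$. In particular the Möbius values match: $\mu_{\mathcal L(\A)}(T,L) = \mu_{L(\A[L])}(\hat 0, \operatorname{T}_p L)$.

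Then I invoke the classical Orlik-Solomon theorem for hyperplane arrangements: for a central essential complex hyperplane arrangement of rank $j$, the absolute Möbius value at the top of its intersection lattice equals the dimension of the top graded piece of its Orlik-Solomon algebra, which is precisely the number of maximal nbc-sets (see, e.g., \cite{OrlikTeraoBook}). Since the local arrangement $\A[L]$ is central essential of rank $j=\operatorname{codim} L$, this gives $|\mu(\hat 0, \operatorname{T}_p L)| = |\nbc_j(\A[L])|$, closing the argument.

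The main (and rather mild) obstacle lies in checking the poset isomorphism with $\A[L]$ rigorously; once that is in place, the combinatorics is routine. One final caveat is the non-essential case: by \Cref{rmk:essential}, $M(\A)=W\times M(\A')$ splits off a torus factor $W$, and both sides of the claimed formula factorize compatibly (the $(1+t)^{\dim W}$ factor in $\operatorname{Poin}(M(\A),t)$ matches the extra binomial slack in the $(1+t)^{d-j}$ terms while the local arrangements $\A[L]$ and $\A'[L\cap T']$ coincide), so the argument reduces cleanly to the essential situation already handled by De Concini--Procesi.
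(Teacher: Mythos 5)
Your argument is correct, and it matches the paper's approach: the paper gives no proof of this lemma, simply citing De Concini--Procesi's Theorem~4.2, and your proposal spells out the translation from their statement (via the local poset isomorphism $[T,L]\cong L(\A[L])$, the classical Möbius-vs-nbc identity for central hyperplane arrangements, and the coefficient extraction for the Betti numbers) that the paper leaves implicit. The reduction to the essential case via $M(\A)=W\times M(\A')$ and Künneth is also sound, though DCP's statement already covers the general case.
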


\begin{remark}\label{rem:matroid_vs_poset}
	$\,$
	\begin{itemize}
		\item[(a)]
		The data given by the matroid $\mathcal M$ together with the function $m$ determines an {\em arithmetic matroid}. We refer to \cite{DM} for a general abstract definition of an arithmetic matroid, and some of its properties.
		\item[(b)] The poset $\mathcal L(\mathcal A)$ determines the arithmetic matroid data. In fact, for any given set $A\subseteq E$ we can consider the set $X$ of minimal upper-bounds in $\mathcal L(\mathcal A)$: $A$ is independent if and only if the poset-rank of the elements of $X$ equals $\vert A\vert$, and the multiplicity of $A$ equals $\vert X\vert$ (via \Cref{rem:multiplicity}).
		On the other hand the recent paper \cite{pagaria2} explicitly constructs two toric arrangements with isomorphic  arithmetic matroid data but non-isomorphic posets of layers. 
	\end{itemize}
\end{remark}

\begin{example}
	In the arrangement $\exA$ introduced in Example \ref{ex:primo} the only minimal dependent set of characters is $C=\{\chi_0, \chi_1, \chi_2\}$, hence this is the only circuit in the associated matroid. 
	The relation $ -\chi_0 +\chi_1 + 3 \chi_2  = 0$ holds.
	The arithmetic matroid associated with $\exA$ has set $E = \{\chi_0, \chi_1, \chi_2\} \subset \Lambda = \Z^2$ and the multiplicity function is given by
	\[
	m(\{\chi_0, \chi_1\}) = 3,
	\]
	while $m(A)= 1$ for all other subsets of $E$. In particular notice that $\exA$ is a central, not unimodular arrangement.
\end{example}

\subsection{Logarithmic forms} \label{sec:log_forms}
We will study presentations of the cohomology algebra that use, as generators, a distinguished set of logarithmic forms.

\begin{definition} \label{def:omega_oomega_psi}
	For all $i\in E$ we set
	\begin{equation}\label{df:ompsi}
		\omega_i:= \frac{1}{2 \pi \sqrt{-1}}\dlog(1-e^{\chi_i}),\quad \text{ and } \quad
		\psi_i:= \frac{1}{2 \pi \sqrt{-1}} \dlog(e^{\chi_i}).
	\end{equation}
	For symmetry reasons, we also define the forms
	\begin{equation}\label{df:bar}
		\oomega_i:=\frac{1}{2 \pi \sqrt{-1}}\dd\log(1-e^{\chi_i}) + \frac{1}{2 \pi \sqrt{-1}}\dd \log (1-e^{-\chi_i}) = 2 \omega_i -\psi_i.\end{equation}
	Given any $A=\{a_1< \ldots < a_l\}\subseteq E$ we write
	\[
	\psi_A:=\psi_{a_1}\wedge \ldots \wedge \psi_{a_l}.
	\]
	and
	\[
	\omega_A:=\omega_{a_1}\wedge \ldots \wedge \omega_{a_l}, \quad
	\textrm{ resp. } 
	\quad
	\oomega_A:=\oomega_{a_1}\wedge \ldots \wedge \oomega_{a_l}.
	\]
\end{definition}

Now, if $C = \{ \chi_0, \ldots, \chi_k \}\subseteq E$ is a circuit of a unimodular arrangement, and we assume that
\[
\chi_0 = \sum_{i=1}^{k} \chi_1,
\]
De Concini-Procesi in \cite[p.~410, eq.~(20)]{DeConciniProcesiToricArr} (see also Remark \ref{rm:dcpm} below)
prove the formal relation
\begin{equation}\label{eq:20dps}
	\partial \omega_C = 
	\sum_{\substack{\min C \in A\subseteq C,\\ B(A) \neq \emptyset}}  
	(-1)^{\epsilon(A)}\omega_{A}\psi_{B(A)}
\end{equation}
where the fixed total ordering on $E$ is understood,
\begin{align*}
	i(A) & :=\max(C\setminus A),\\ 
	B(A) & :=(C\setminus A) \setminus i(A),\\ 
	\epsilon(A) & :=\vert A\vert + 
	\ell(A,C \setminus A),
\end{align*}
and $\ell(A,C \setminus A)$ is the length of the permutation reordering $A,C \setminus A$ (see \Cref{def:ell}).

\begin{remark}\label{rm:dcpm}
	Notice that in \cite[eq.~(15.3)]{DeConciniProcesiBook} (and also in  \cite[eq.~(20)]{DeConciniProcesiToricArr}) there is a misprint concerning the sign: 
	writing $[k]$ for $\{1, \ldots, k \}$, 
	the correct equation is
	\begin{equation}\label{eq:dp_corretta}
		\omega_{[k]} = \sum_{I \subsetneq [k]}  
		(-1)^{
			|I| + k + 1 + 
			\ell(I, [k] \setminus I )
		}
		\omega_{I}\psi_{B(I \cup \{0\})}\omega_0.
	\end{equation}
\end{remark}

To go from \cite[eq.~(15.3)]{DeConciniProcesiBook} to our \eqref{eq:20dps} it is enough to use the boundary relation
\begin{equation*}
	\partial \omega_C = \omega_{[k]} +
	\sum_{\substack{0\in A\subseteq C,\\ |A|=k}}  
	(-1)^{\epsilon(A)}\omega_{A} .
\end{equation*}

\begin{example} \label{ex:unimod}
	Consider the unimodular arrangement $\exA'$ in $T=(\C^*)^2$ given by the hypertori $ H_1, H_2, H_0$, where $H_0=\{xy=1\}$. 
	The relation
	$ \chi_0 =\chi_1 + \chi_2 $
	holds and the forms associated with $\exA'$ are
	\begin{align*}
		\omega_0=& \frac{1}{2 \pi \sqrt{-1}}\dlog(1-xy),& \omega_1=& \frac{1}{2 \pi \sqrt{-1}}\dlog(1-y),& \omega_2=& \frac{1}{2 \pi \sqrt{-1}}\dlog(1-x),\\
		\psi_0= &\frac{1}{2 \pi \sqrt{-1}} \dlog(xy),&
		\psi_1= & \frac{1}{2 \pi \sqrt{-1}} \dlog(y),&  \psi_2= & \frac{1}{2 \pi \sqrt{-1}} \dlog(x).
	\end{align*}
	The set $C = \{\chi_0,\chi_1, \chi_2\}$ is the only circuit and relation \eqref{eq:20dps} gives
	\[
	\omega_0 \omega_1 - \omega_0 \omega_2 + \omega_1 \omega_2 = \omega_0 \psi_1
	\]
	as can be checked directly. In the following we will use the arrangement $\exA'$ as a running example of a unimodular arrangement.
\end{example}

%

\section{Some formal identities}\label{s:some_formal_identities}

%

In this section we derive some identities among the forms associated with a circuit $C\subseteq E$. For ease of notation we identify $E$ as a subset of $\mathbb N$ with the natural order, and we suppose that $C=\{0,1,\ldots,k\}$. Then the characters $\chi_0,\ldots,\chi_k$ exhibit a linear dependency, and we  examine different cases according to the signs of the coefficients of this linear dependency. 

The results of this section will be enough in order to treat the unimodular case, where (see Remark~\ref{rm:unim_pm1}) such coefficients must be units. 

\begin{lemma}\label{lem:prodotto}
	If $\chi_0 = \sum_{i=1}^k  \chi_i $, we have the following  identity.
	\begin{equation}\label{eq:prodotto}
		\omega_1\cdots \omega_k
		=
		\omega_0 \prod_{i=2}^k (\omega_i - \omega_{i-1} + \psi_{i-1})\end{equation}
\end{lemma}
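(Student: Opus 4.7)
The plan is to verify the identity by direct computation, after passing to multiplicative coordinates where the forms have transparent expressions.

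First I would set $u_i := e^{\chi_i}$, so that $\omega_i = \tfrac{1}{2\pi\sqrt{-1}}\,\tfrac{u_i}{u_i-1}\,d\chi_i$ and $\psi_i = \tfrac{1}{2\pi\sqrt{-1}}\,d\chi_i$, and the hypothesis $\chi_0 = \sum_{i\geq 1}\chi_i$ translates into $u_0 = u_1\cdots u_k$ together with $d\chi_0 = \sum_{j=1}^{k} d\chi_j$. Both sides of \eqref{eq:prodotto} are $k$-forms, and after pulling out common scalar factors they are proportional to $d\chi_1\wedge\cdots\wedge d\chi_k$ (any $d\chi_j$ with $j \notin \{1,\dots,k\}$ cannot appear). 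It therefore suffices to check a scalar identity between the coefficients of $d\chi_1\wedge\cdots\wedge d\chi_k$.

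The LHS coefficient is the easy part: $\omega_1\cdots\omega_k$ contributes $\prod_{i=1}^{k}\tfrac{u_i}{u_i-1}$ (up to the universal $(2\pi\sqrt{-1})^{-k}$). For the RHS I would first rewrite each factor using
\[
\omega_{i-1}-\psi_{i-1} \;=\; \frac{1}{2\pi\sqrt{-1}}\,\frac{1}{u_{i-1}-1}\,d\chi_{i-1},
\]
so that
\[
\omega_i - \omega_{i-1} + \psi_{i-1} \;=\; \frac{1}{2\pi\sqrt{-1}}\Bigl(\tfrac{u_i}{u_i-1}\,d\chi_i - \tfrac{1}{u_{i-1}-1}\,d\chi_{i-1}\Bigr).
\]
Each such factor is a bidiagonal $1$-form in $d\chi_{i-1},d\chi_i$, and the product $\prod_{i=2}^{k}$ is the $(k-1)$-form whose coefficient in front of $d\chi_1\wedge\cdots\wedge\widehat{d\chi_j}\wedge\cdots\wedge d\chi_k$ is the determinant of the $(k-1)\times(k-1)$ minor obtained from the bidiagonal $(k-1)\times k$ coefficient matrix by deleting column $j$. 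This minor splits into an upper-triangular block above the gap and a lower-triangular block below, giving the clean formula
\[
(-1)^{j-1}\;\prod_{l=1}^{j-1}\tfrac{1}{u_l-1}\;\prod_{l=j+1}^{k}\tfrac{u_l}{u_l-1}.
\]

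Finally I would wedge with $\omega_0 = \tfrac{1}{2\pi\sqrt{-1}}\,\tfrac{u_0}{u_0-1}\sum_{l=1}^{k}d\chi_l$; only the $d\chi_j$ summand survives for each $j$, producing a sign $(-1)^{j-1}$ which cancels the one above. The identity \eqref{eq:prodotto} thus reduces to the scalar equality
\[
\prod_{i=1}^{k}\tfrac{u_i}{u_i-1} \;=\; \tfrac{u_0}{u_0-1}\sum_{j=1}^{k}\frac{1}{(u_1-1)\cdots(u_{j-1}-1)}\prod_{l=j+1}^{k}\tfrac{u_l}{u_l-1}.
\]
Dividing through by $\prod_{i}\tfrac{u_i}{u_i-1}$, this becomes $\frac{u_0-1}{u_0} = \sum_{j=1}^{k}\tfrac{u_j-1}{u_1\cdots u_j}$, whose right-hand side telescopes as $\sum_j\bigl(\tfrac{1}{u_1\cdots u_{j-1}}-\tfrac{1}{u_1\cdots u_j}\bigr)=1-\tfrac{1}{u_0}$, closing the argument.

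The only delicate step is the cofactor bookkeeping in the bidiagonal expansion; once that is set up, the remaining algebra is a transparent telescoping sum, and no induction on $k$ is needed. An induction on $k$ would be awkward here because reducing $k$ changes the underlying linear dependency, whereas the direct coefficient comparison is self-contained.
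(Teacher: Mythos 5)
Your proof is correct, and it takes a genuinely different route from the paper's. The paper argues purely formally: for each $j$ it isolates the sum of monomials in the expansion of the right-hand side that contain neither $\omega_j$ nor $\psi_j$, showing this sum equals $\omega_0\prod_{i<j}(-\omega_i+\psi_i)\prod_{i>j}\omega_i$; summing over $j$ yields the symbolic identity~\eqref{eq:pf}, which is then matched term-by-term against De~Concini--Procesi's equation~(15.3) in~\cite{DeConciniProcesiBook}. You instead compute directly in coordinates $u_i=e^{\chi_i}$: both sides become scalar multiples of $d\chi_1\wedge\cdots\wedge d\chi_k$, the coefficients on the right are the maximal cofactors of a bidiagonal $(k-1)\times k$ matrix, whose block-triangular structure gives $(-1)^{j-1}\prod_{l<j}\tfrac{1}{u_l-1}\prod_{l>j}\tfrac{u_l}{u_l-1}$; the sign cancels against the $(-1)^{j-1}$ arising from moving $d\chi_j$ to the front, and the whole identity collapses to the telescoping scalar equality $\sum_{j=1}^k\tfrac{u_j-1}{u_1\cdots u_j}=1-\tfrac{1}{u_0}$. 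I verified each of these steps and they are correct. The trade-off: the paper's route is shorter on the page because it outsources the combinatorial core to a citation, while yours is entirely self-contained and elementary, requiring only a cofactor expansion and a telescoping sum. One small point worth flagging explicitly: reducing to a single coefficient of $d\chi_1\wedge\cdots\wedge d\chi_k$ uses that $\chi_1,\dots,\chi_k$ are linearly independent; this holds in context because $\{0,\dots,k\}$ is a circuit, and when independence fails both sides vanish for degree reasons, so the identity still holds --- but a half-sentence saying so would make the argument airtight.
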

\begin{proof}
	We fix $j \in \{1, \ldots, k\}$. Consider the non-zero products in the expansion of \eqref{eq:prodotto} that do not contain either the factor $\omega_j$ nor the factor $\psi_j$. In each one of these terms, all the factors $\omega_i$ for $i>j$ have to appear.
	Instead, due to the fact that $\omega_i\wedge  \psi_i = 0$,  
	exactly one of the two  terms $\omega_i$ and $\psi_i$
	has to appear for $i<j$ . So 
	the sum of the products not containing $\omega_j$ or $\psi_j$ will be
	\[
	\omega_0\prod_{1 \leq i<j} (-\omega_i+\psi_i) \prod_{i>j} \omega_i.
	\]
	Hence we have,
	\begin{align*}
		\omega_0 \prod_{i=2}^k (\omega_i - \omega_{i-1} + \psi_{i-1}) & =  \sum_{j=1}^k \omega_0\prod_{1 \leq i<j} (-\omega_i+\psi_i) \prod_{i>j} \omega_i = \\
		& =\sum_{j=1}^k \sum_{\substack{0\in A\subsetneq C,\\ i(A) = j}} 
		(-1)^{|A_{\leq j}|-1} \eta_A
	\end{align*}
where 
	\[
	\eta_A = \eta_0 \cdots \widehat{\eta_{i(A)}} \cdots \eta_k
	\] 
	and
	\[
	\eta_i := \left\{
	\begin{array}{ll} \omega_i & \text{ if }i\in A\\ 
	\psi_i & \text{ otherwise.}
	\end{array}
	\right. 
	\]  
	We conclude the purely formal identity
	\begin{equation}\label{eq:pf}
		\omega_0 \prod_{i=2}^k (\omega_i - \omega_{i-1} + \psi_{i-1}) =  \sum_{\substack{0\in A\subsetneq C}} 
		(-1)^{|A_{\leq i(A)}|-1} \eta_A.
	\end{equation}
	Now we use our assumption $\sum_{i=1}^k  \chi_i = \chi_0$. It entails that the form $\vartheta^{(0)}$ defined before Proposition 15.6 in \cite{DeConciniProcesiBook} equals $\omega_0$. In particular, again in the notation of \cite{DeConciniProcesiBook}, for $I \subset [k]$ we have
	\[
	\Phi_I^{(0)}  \stackrel{\textrm{def}}{=} (-1)^{\ell(I, [k] \setminus I)}\prod_{i \in I} \omega_i \prod_{j\in 
		B(I \cup \{0\})
	} \psi_j  \vartheta^{(0)} = (-1)^{i(I)-1}  \eta_{I \cup \{0\}}
	\] 
	noticing that the products $\eta_{I \cup \{0\}}$ already follow the standard ordering. We can now use \cite[eq.~(15.3)]{DeConciniProcesiBook}, i.e.,
	\[
	\sum_{I \subsetneq [k]} (-1)^{|I| + k
		+1
	} 	\Phi_I^{(0)} = \omega_1 \cdots \omega_k
	\]
	in order to rewrite Equation \eqref{eq:pf}. If we take $A = I \cup\{0\}$, since $k-i(A) = |A|-|A_{\leq i(A)}|,$
	we obtain the claimed equality.
\end{proof}
\begin{lemma}\label{lm:allplus}
	If $\sum_{i=0}^k \chi_i = 0$, then we have
	\begin{equation}\label{eq:prod_a}
		\prod_{i=1}^k (\omega_i - \omega_{i-1} + \psi_{i-1}) = 0
	\end{equation}
	
	or, using the forms $\oomega_i$ defined in  Equation~\ref{df:bar},
	\begin{equation}
		\prod_{i=1}^{k} (\oomega_i + \psi_i - \oomega_{i-1} + \psi_{i-1})=0.
	\end{equation}
	
\end{lemma}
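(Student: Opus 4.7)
The plan is to reduce the identity to \Cref{lem:prodotto} by replacing $\chi_0$ with $-\chi_0$. Set $\alpha_i:=\omega_i-\omega_{i-1}+\psi_{i-1}$ and $P:=\prod_{i=2}^{k}\alpha_i$, so that the left-hand side factors as $\alpha_1\cdot P=(\omega_1-(\omega_0-\psi_0))\cdot P$.

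The first step is to observe that $\omega_0-\psi_0$ is precisely the logarithmic form associated with the character $-\chi_0$. This is immediate from the identity $1-e^{-\chi}=-e^{-\chi}(1-e^{\chi})$, which yields $\tfrac{1}{2\pi\sqrt{-1}}\dd\log(1-e^{-\chi})=\omega-\psi$. Denote this form by $\hat\omega_0$, so $\alpha_1=\omega_1-\hat\omega_0$. The hypothesis $\sum_{i=0}^{k}\chi_i=0$ then rewrites as $-\chi_0=\sum_{i=1}^{k}\chi_i$, so the list $(-\chi_0,\chi_1,\ldots,\chi_k)$ satisfies the assumption of \Cref{lem:prodotto}. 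Applying that lemma (with $-\chi_0$ playing the role of $\chi_0$) gives
\[
\omega_1\omega_2\cdots\omega_k=\hat\omega_0\cdot P.
\]

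The second step is to establish the parallel identity
\[
\omega_1\omega_2\cdots\omega_{j-1}\cdot\alpha_j=\omega_1\omega_2\cdots\omega_j,\qquad j\ge 2,
\]
by induction on $j$. The inductive step uses only the trivial relations $\omega_{j-1}\wedge\omega_{j-1}=0$ and $\omega_{j-1}\wedge\psi_{j-1}=0$; the second holds because both $\omega_{j-1}$ and $\psi_{j-1}$ are scalar multiples of the single one-form $\dd\chi_{j-1}$. Iterating yields $\omega_1\cdot P=\omega_1\omega_2\cdots\omega_k$, and combining with the previous step gives $\omega_1\cdot P=\hat\omega_0\cdot P$, whence
\[
\prod_{i=1}^{k}\alpha_i=(\omega_1-\hat\omega_0)\cdot P=0.
\]

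The only genuinely conceptual move is recognising that $\omega_0-\psi_0$ is the $\omega$-form of the negated character $-\chi_0$, which is what allows \Cref{lem:prodotto} to be applied in a setting whose hypothesis looks different at first sight; after that, the argument is a short telescoping using formal vanishing relations. The equivalent $\oomega$-formulation follows automatically: expanding $\oomega_i=2\omega_i-\psi_i$ one checks $\oomega_i+\psi_i-\oomega_{i-1}+\psi_{i-1}=2\alpha_i$, so the product in the second formulation equals $2^{k}\prod_{i=1}^{k}\alpha_i=0$.
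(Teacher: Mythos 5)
Your proof is correct and takes essentially the same route as the paper's: both invoke \Cref{lem:prodotto} applied to the rewritten hypothesis $-\chi_0=\sum_{i\geq 1}\chi_i$, identify $\omega_0-\psi_0$ as the logarithmic form attached to $-\chi_0$, factor the product through the telescoping identity $\omega_1\prod_{i\geq 2}(\omega_i-\omega_{i-1}+\psi_{i-1})=\omega_1\cdots\omega_k$, and pass to the $\oomega$-version by multiplying by $2^k$. The only cosmetic difference is that you spell out the induction behind the telescoping identity, which the paper dismisses as readily verified.
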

\begin{proof}
	We start by a formal identity which can be readily verified, e.g., by induction on $k$.
	\[
	\omega_1 \cdots \omega_k = \omega_1 \prod_{i=2}^{k} (\omega_i - \omega_{i-1} + \psi_{i-1})
	\]
	We can now expand the left-hand side using Lemma \ref{lem:prodotto} applied to the identity $(-\chi_0) = \sum_{i>0} \chi_i$. Collecting terms we obtain
	\[
	0= \left(\omega_1 - \frac{1}{2 \pi \sqrt{-1}} \dd \log (1 - e^{-\chi_0})\right) \prod_{i=2}^{k} (\omega_i - \omega_{i-1} + \psi_{i-1}).\]
	Noticing that $\frac{1}{2 \pi \sqrt{-1}}\dd \log (1 - e^{-\chi_0}) = \omega_0 - \psi_0$ we conclude:
	\[
	(\omega_1- \omega_0 + \psi_0) \prod_{i=2}^{k} (\omega_i - \omega_{i-1} + \psi_{i-1}) = 0.\]

	For the second equation we can immediately compute
	\[2(\omega_i - \omega_{i-1} + \psi_{i-1})=\oomega_i + \psi_i - \oomega_{i-1} + \psi_{i-1},\] so multiplying formula \eqref{eq:prod_a} by $2^k$ we get the claimed identity.
\end{proof}

\begin{example}
	We continue with the arrangement introduced in Example \ref{ex:unimod}.
	Since the relation
	\[
	\chi_0 = \chi_1 + \chi_2
	\]
	holds, from Lemma \ref{lem:prodotto} we have
	$ \omega_1 \omega_2 = \omega_0 (\omega_2-\omega_1 + \psi_1).
	$
	In order to apply Lemma \ref{lm:allplus} we set $\chi_0' := - \chi_0$ (and hence $\omega'_0 = \omega_0-\psi_0$ and $\psi'_0 = -\psi_0$) and if we consider the characters $\chi_0', \chi_1, \chi_2$ and the ordering $(0,1,2)$ for the elements of  the circuit we obtain
	\[
	(\omega_1 - \omega_0' + \psi_0')(\omega_2 - \omega_1 + \psi_1)=0,
	\]
	while if we consider the ordering $(0,2,1)$ we obtain the relation
	\[(\omega_2-\omega_0' + \psi_0')(\omega_1 - \omega_2 + \psi_2) = 0
	\]
	as one can easily check by direct computation. The relations that we can obtain with different orderings of the elements in the circuit are consequences of the two above.
\end{example}

%


\begin{lemma}\label{lm:signedeq}
	If $\sum_{i=0}^k {c}_i \chi_i = 0$ where ${c}_i = \pm 1$ for all $i$, 
	\begin{equation}\label{eq:prod_b}
		\prod_{i=1}^{k} (\oomega_i + {c}_i\psi_i - \oomega_{i-1} + {c}_{i-1}\psi_{i-1})=0,
	\end{equation}
\end{lemma}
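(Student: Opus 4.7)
The plan is to reduce this to Lemma \ref{lm:allplus} by exploiting symmetry properties of the forms $\oomega_i$ and $\psi_i$ under sign changes of the defining characters. The key observation is that replacing a primitive character $\chi_i$ by $-\chi_i$ leaves $\oomega_i$ invariant while flipping the sign of $\psi_i$. Indeed, from the definitions
\[
\oomega_i = \frac{1}{2\pi\sqrt{-1}}\dd\log(1-e^{\chi_i}) + \frac{1}{2\pi\sqrt{-1}}\dd\log(1-e^{-\chi_i}),\qquad \psi_i = \frac{1}{2\pi\sqrt{-1}}\dd\chi_i,
\]
the first expression is manifestly symmetric under $\chi_i\mapsto -\chi_i$ (the two summands swap), while the second is antisymmetric.

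Now set $\chi_i' := c_i\chi_i$ for $i=0,\dots,k$, and denote by $\oomega'_i$, $\psi'_i$ the forms associated to the $\chi_i'$ by the recipe of \Cref{def:omega_oomega_psi}. By the symmetry observation above we have
\[
\oomega'_i = \oomega_i,\qquad \psi'_i = c_i\,\psi_i.
\]
The hypothesis $\sum_{i=0}^k c_i\chi_i = 0$ rewrites as $\sum_{i=0}^k \chi_i' = 0$, so by \Cref{lm:allplus} applied to the characters $\chi_0',\dots,\chi_k'$ (which are still primitive, since $c_i\in\{\pm1\}$), we obtain the formal identity
\[
\prod_{i=1}^{k} (\oomega'_i + \psi'_i - \oomega'_{i-1} + \psi'_{i-1})=0.
\]
Substituting the translations above yields precisely \eqref{eq:prod_b}.

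There is no real obstacle here: the argument is a direct reduction, and the only thing to check carefully is the compatibility of the symmetry rule $\psi'_i = c_i\psi_i$ and $\oomega'_i = \oomega_i$ with the re-indexed linear dependency. One might worry that \Cref{lm:allplus} was stated under the assumption that the $\chi_i$ are (primitive) characters, but since the sign changes do not affect primitivity this causes no issue.
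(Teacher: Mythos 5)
Your proof is correct and coincides with the paper's own argument: set $\chi_i':=c_i\chi_i$, observe from the defining formulas that $\oomega_i'=\oomega_i$ and $\psi_i'=c_i\psi_i$, and apply \Cref{lm:allplus} to $\sum_i\chi_i'=0$. Nothing to add.
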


\begin{proof} We apply Lemma \ref{lm:allplus} to the identity $\sum_{i=0}^k\chi_i'=0$ where we set $\chi_i':=c_i\chi_i$ for all $i$. A glance at Equations \eqref{df:ompsi} and \eqref{df:bar} shows that the forms $\oomega_i'$ and $\psi_i'$ associated with the $\chi_i'$ satisfy $\oomega_i'=\oomega_i$ and $\psi_i'=c_i\psi_i$ for all $i$, proving the claimed equality.
	%
	%
	%
\end{proof}

\begin{definition}\label{df:etabarra}
	Given a subset $A\subseteq E$, for every $i\in E$ let
	\[
	{\oeta}_i^A:= \left\{
	\begin{array}{ll} \oomega_i & \text{ if }i\in A\\ 
	\psi_i & \text{ otherwise}
	\end{array}
	\right. .
	\]
	Thus, if $B\subseteq E$ is disjoint from $A$ we can define
	\[
	\oeta_{A,B} := \prod_{i \in A \cup B}\oeta_i^A,
	\]
	where the factors are in increasing order with respect to the total order on $E$.
\end{definition}



\begin{proposition}\label{prop:rel_unimod} Let $C$ be a circuit of the matroid such that the corresponding minimal linear dependency has the form $\sum_{i\in C} c_i\chi_i=0$ where $c_i\in \{\pm 1\}$ for all $i$. Then,
	
	\begin{equation}\label{eq:raccolta}
		\sum_{j\in C} \sum_{
			\substack{
				A,B\subset C\\
				C= A \sqcup B \sqcup \{j\}
		}} (-1)^{|A_{\leq j}|} c_{B}\oeta_{A,B}  = 0
	\end{equation}
	where, 
	for every $B\subseteq E$, we write $c_B := \prod_{i \in B} c_i$.
	Moreover, as a consequence of the equation above we have
	\begin{equation}\label{eq:relazione_bar}
		\sum_{j \in C} \sum_{
			\substack{
				A,B\subset C\\
				C= A \sqcup B \sqcup \{j\}\\ \vert B \vert \textrm{ even}
			}
		} (-1)^{|A_{\leq j}|} c_B\oeta_{A,B}  = 0.
	\end{equation}
	In particular $\partial \oomega_C$ corresponds to the sum of the terms with $B = \emptyset$.
\end{proposition}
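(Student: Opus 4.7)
The plan is to expand the identity from Lemma~\ref{lm:signedeq} in two stages. First rewrite each factor as $X_i - Y_{i-1}$, where
\[
X_j := \oomega_j + c_j \psi_j, \qquad Y_j := \oomega_j - c_j \psi_j,
\]
and expand the product (keeping the factors in the order $i = 1, \dots, k$) as
\[
\prod_{i=1}^k (X_i - Y_{i-1}) = \sum_{S \subseteq \{1,\dots,k\}} (-1)^{\lvert S^c \rvert} \prod_{i \in S} X_i \prod_{i \notin S} Y_{i-1}.
\]

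The key combinatorial observation is that $\oomega_j$ and $\psi_j$ are both scalar multiples of $\dd \chi_j$, so
\[
\oomega_j \wedge \oomega_j = \psi_j \wedge \psi_j = \oomega_j \wedge \psi_j = 0.
\]
Thus in a nonzero summand, each index $j \in C$ appears at most once among the chosen factors. Since position $i$ contributes index $i$ if $i \in S$ and index $i-1$ if $i \notin S$, survival is equivalent to the indicator sequence of $S$ being nondecreasing, i.e.\ $S = \{t+1, \dots, k\}$ for some $t \in C$. In that case the unique missing index is $t$, which will play the role of the parameter $j$ in \eqref{eq:raccolta}.

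For fixed threshold $t$, I would further expand each $Y_i$ (for $i < t$) and each $X_i$ (for $i > t$); this amounts to choosing, for each $i \neq t$, either $\oomega_i$ (place $i$ in $A$) or $\pm c_i \psi_i$ (place $i$ in $B$). Since the factors already appear in increasing order, the resulting monomial is exactly $\oeta_{A,B}$ of \Cref{df:etabarra}. Its coefficient is $(-1)^t \cdot (-1)^{\lvert B_{<t} \rvert} \cdot c_B$: the first sign from the $t$ choices of $-Y_{i-1}$, the second from the $-c_i \psi_i$ summand inside each $Y_i$ with $i < t$. Using $\lvert A_{<t} \rvert + \lvert B_{<t} \rvert = t$ together with $t \notin A$ collapses this to $(-1)^{\lvert A_{\le t} \rvert} c_B$, giving \eqref{eq:raccolta}. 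The main obstacle is precisely this sign bookkeeping, which must be aligned consistently with the conventions in \Cref{df:etabarra} and in \eqref{eq:raccolta}.

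To obtain \eqref{eq:relazione_bar}, apply \eqref{eq:raccolta} both to the given relation $\sum c_i \chi_i = 0$ and to $\sum (-c_i)\chi_i = 0$; the substitution $c_i \mapsto -c_i$ multiplies each summand by $(-1)^{\lvert B \rvert}$, so averaging the two identities kills the odd-$\lvert B \rvert$ terms. Finally, in the $B = \emptyset$ part of \eqref{eq:raccolta} one has $A = C \setminus \{j\}$, whence $\lvert A_{\le j} \rvert = j$ and $c_\emptyset = 1$, so this part reduces to $\sum_{j \in C} (-1)^j \oomega_{C \setminus \{j\}} = \partial \oomega_C$, as claimed.
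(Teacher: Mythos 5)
Your proof is correct and follows essentially the same route as the paper's: both expand the telescoping product of Lemma~\ref{lm:signedeq}, and your survival analysis (identifying $S=\{t+1,\dots,k\}$ as the only subsets giving a nonzero monomial because $\oomega_j$ and $\psi_j$ are proportional to $\dd\chi_j$) is exactly the justification behind the paper's one-line rewriting of \eqref{eq:prod_b} as $\sum_{j}\prod_{i<j}(-\oomega_i+c_i\psi_i)\prod_{i>j}(\oomega_i+c_i\psi_i)$. The passage to \eqref{eq:relazione_bar} via the substitution $c_i\mapsto -c_i$ and the $B=\emptyset$ observation also coincide with what the paper does.
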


\begin{proof}
	Equation~\eqref{eq:prod_b} can be rewritten as follows:	
	\begin{equation}
		\sum_{j=0}^k \prod_{i<j} (-\oomega_i + c_i\psi_i) \prod_{i>j} (\oomega_i + c_i\psi_i) = 0
	\end{equation}
	Expanding all the products and using Definition~\ref{df:etabarra} we obtain formula~\eqref{eq:raccolta}.
	
	\label{rm:paridispari}
	Moreover, using the negated equation $\sum_{i=0}^k - {c}_i \chi_i = 0$, Lemma \ref{lm:signedeq} gives
	\begin{equation}\label{eq:negata}
		\prod_{i=1}^{k} (\oomega_i - {c}_i\psi_i - \oomega_{i-1} - {c}_{i-1}\psi_{i-1})=0.
	\end{equation}
	Adding this relation to the one in \eqref{eq:prod_b}, and decomposing the expansion of the product 
	in two parts, one containing all the terms $\omega_A\psi_B$ with $|B|$ even and the other one containing all those terms with $|B|$ odd, it can be shown that each of the two parts must equal $0$.
\end{proof}

In \cite[Thm.~5.2]{DeConciniProcesiToricArr} De Concini and Procesi prove that the complement of a unimodular toric arrangement is formal. They do this by showing that the rational cohomology ring is isomorphic to the sub-algebra of closed forms generated by $\omega_i=\dlog(e^{b_i}-e^{\chi_i})$ for $i \in E$ and $\psi_\chi=\dlog(e^{\chi})$ for $\chi \in \Lambda$. The formal relations among these generators are implicit in \cite[eq.~(20)]{DeConciniProcesiToricArr}.

Notice that if the arrangement $\A$ is essential the forms  $\psi_i=\dlog(e^{\chi_i})$ for $i \in E$ generate $H^1(T; \Q)$.  It follows that the relations stated in \Cref{prop:rel_unimod}  above lead to a presentation of the cohomology ring
with respect to the generators
$\oomega_i$'s and $\psi_i$'s. Hence we have the following reformulation of the result of \cite{DeConciniProcesiToricArr}.
\begin{theorem}
	Let $\A$ be an essential unimodular toric arrangement. The rational cohomology algebra $H^*(M(\A),\mathbb Q)$ is isomorphic to the algebra $\mathcal E$ with 
	\begin{itemize}
		\item[--] Set of generators $e_{A;B}$, where $A$ and $B$ are disjoint and such that $A\sqcup B$ is an independent set;
		the degree of the generator $e_{A;B}$ is $\vert A \sqcup B\vert$.
		\item[--] The following types of relations
		\begin{itemize}
			\item For any two generators $e_{A;B} $, $e_{A';B'}$,  
			\begin{equation}\label{eq:rel_prodotto_nullo_xyz}
			e_{A;B} e_{A';B'}=0
			\end{equation} if $ A \sqcup B \sqcup A' \sqcup B' $  is a dependent set, and otherwise 
			\begin{equation}\label{eq:relazione_prodotto_xyz}
				e_{A;B}e_{A';B'}= 
				(-1)^{
					\ell(A\cup B, A' \cup B')
				} 
				e_{A\cup A';B\cup B'}.
			\end{equation}
			\item For every linear dependency $\sum_{i\in E} n_i \chi_i=0$ with $n_i\in \mathbb Z$, a relation 
			\begin{equation}\label{eq:relazione_toro_xyz}
				\sum_{i\in E} n_i e_{\emptyset;\{i\} } =0.
			\end{equation}
			
			\item  For every circuit $C\subseteq E$, with associated linear dependency $\sum_{i\in C} n_i\chi_i$ with $n_i\in \mathbb Z$, a relation
			\begin{equation}\label{eq:relazione_final_unimod}
				\sum_{j\in C} \sum_{
					\substack{
						A,B\subset C\\
						C= A \sqcup B \sqcup \{j\}\\
						\vert B \vert \textnormal{ even}
				}}
				(-1)^{|A_{\leq j}|} c_B   e_{A;B} = 0
			\end{equation}
			where, for all $i\in C$, $c_i:=\operatorname{sgn}n_i$ and $c_B = \prod_{i \in B} c_i$.			
		\end{itemize}
	\end{itemize}     
\end{theorem}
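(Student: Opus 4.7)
The plan is to define an algebra map $\phi \colon \mathcal E \to H^*(M(\A),\mathbb Q)$ by sending $e_{A;B}$ to the cohomology class of $\oeta_{A,B}$ (see \Cref{df:etabarra}), then show $\phi$ is well defined, surjective, and an isomorphism by matching graded dimensions against \Cref{lm:poincare}.

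First I would verify that every defining relation of $\mathcal E$ holds among the forms $\oeta_{A,B}$. Relation \eqref{eq:relazione_prodotto_xyz} just records the sign produced when reshuffling a wedge product of distinct $1$-forms into standard order. When $A\sqcup B\sqcup A'\sqcup B'$ is dependent, \eqref{eq:rel_prodotto_nullo_xyz} follows by choosing any circuit inside the union and applying \eqref{eq:relazione_bar} to rewrite the wedge product as a sum of wedges each containing a repeated $1$-form, and hence zero. The torus relation \eqref{eq:relazione_toro_xyz} already holds at the level of forms because $\sum_i n_i\psi_i=\frac{1}{2\pi\sqrt{-1}}d\log\prod_i e^{n_i\chi_i}$ vanishes whenever $\sum_i n_i\chi_i=0$. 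Finally, \eqref{eq:relazione_final_unimod} is exactly the identity \eqref{eq:relazione_bar} of \Cref{prop:rel_unimod}, whose hypothesis $c_i\in\{\pm 1\}$ is automatic in the unimodular setting by \Cref{rm:unim_pm1}.

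Next I would establish surjectivity. By \cite[Thm.~5.2]{DeConciniProcesiToricArr}, the classes of $\{\omega_i\}_{i\in E}\cup\{\psi_\chi\}_{\chi\in\Lambda}$ generate $H^*(M(\A),\mathbb Q)$ as an algebra. Since $\A$ is essential, the $\chi_i$ span $\Lambda\otimes\mathbb Q$, so every $\psi_\chi$ is already a rational linear combination of the $\psi_i$; combined with $\omega_i=(\oomega_i+\psi_i)/2$, the classes $[\oomega_i]$ and $[\psi_i]$ suffice as generators. These are the images of the degree-one generators $e_{\{i\};\emptyset}$ and $e_{\emptyset;\{i\}}$ of $\mathcal E$, so $\phi$ is surjective.

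It remains to compare graded dimensions. Iterating \eqref{eq:relazione_prodotto_xyz} shows that $\mathcal E$ is spanned as a $\mathbb Q$-vector space by the monomials $e_{A;B}$ with $A\sqcup B$ independent. I would then cut this spanning set down to a basis indexed by pairs $(A,B)$ with $|A|+|B|=j$, where $A$ is an nbc-set of the localisation $\A[L]$ at $L=\cap_{i\in A}H_i$ and $B$ is contained in a preassigned completion of $A$ to a basis of $\Lambda\otimes\mathbb Q$. The relation \eqref{eq:relazione_toro_xyz} lets one replace any $B$ by a subset of such a completion, while a straightening algorithm modeled on the classical nbc-basis argument for Orlik-Solomon algebras, driven by \eqref{eq:relazione_final_unimod}, eliminates broken circuits from the $A$-part. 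Counting these pairs gives $\sum_i N_i\binom{d-i}{j-i}=\beta_j(M(\A))$ by \Cref{lm:poincare}, forcing $\phi$ to be a graded isomorphism. The main obstacle will be confluence of the straightening: \eqref{eq:relazione_final_unimod} is not homogeneous in the partition $(A,B)$, since it links monomials with different sizes of $B$ subject to $|B|$ even, which is precisely why one must work with $\oomega$ rather than $\omega$ (compare \eqref{df:bar}). Verifying that a suitable lexicographic reduction on $(A,B)$ terminates at exactly the predicted nbc basis is the substantive combinatorial content of the argument.
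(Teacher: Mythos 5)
Your proof takes a genuinely different route than the paper does for this particular theorem. The paper presents the unimodular statement as a ``reformulation'' of De Concini--Procesi's result: the paragraph preceding the theorem cites \cite[Thm.~5.2]{DeConciniProcesiToricArr}, and the remark immediately following it spells out the dictionary matching the generators $e_{\{i\};\emptyset}, e_{\emptyset;\{i\}}$ and the relations \eqref{eq:rel_prodotto_nullo_xyz}--\eqref{eq:relazione_final_unimod} to the ones listed by De Concini--Procesi. You instead run a self-contained verification in the style of the paper's later proof of \Cref{thm:main_rational}: build $\phi\colon\mathcal E\to H^*(M(\A),\Q)$, check well-definedness from the formal identities of \Cref{prop:rel_unimod}, use De Concini--Procesi only for surjectivity, and finish with an nbc straightening plus a count against \Cref{lm:poincare}. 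This buys a more explicit argument at the cost of duplicating work the paper defers to the general case, where it is done once via the Leray graded decomposition (\Cref{lem:graduato}) rather than the Poincar\'e-polynomial count.

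Two points need tightening. First, your justification of $\oeta_{A,B}\oeta_{A',B'}=0$ when $A\sqcup B\sqcup A'\sqcup B'$ is dependent is not established by ``applying \eqref{eq:relazione_bar}'': that identity lives in degree $|C|-1$ and does not directly rewrite a degree-$|C|$ monomial. Set $\tilde A=A\cup A'$, $\tilde B=B\cup B'$ and pick a circuit $C\subseteq\tilde A\sqcup\tilde B$. If $C\cap\tilde B\neq\emptyset$, use the linear relation $\sum_{i\in C}c_i\psi_i=0$ to express one $\psi_{c_0}$ ($c_0\in C\cap\tilde B$) in terms of the remaining $\psi_i$; every resulting monomial has a repeated index $i$, hence vanishes because $\oomega_i\psi_i=\psi_i\psi_i=0$. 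If $C\subseteq\tilde A$, one needs the stronger fact $\oomega_C=0$, which follows by expanding $\oomega_C=\prod_{i\in C}(2\omega_i-\psi_i)$: the $S=C$ term is $\pm 2^{|C|}\omega_C=0$ by \Cref{lem:prodotto}, and every term with $S\subsetneq C$ has a $\psi$-factor indexed in $C$ that can be eliminated as above. Second, the ``confluence'' concern at the end is a red herring for your purposes: to bound $\dim_{\Q}\mathcal E_j$ from above you only need that the straightening rewriting terminates, which is immediate from the well-founded lexicographic order you set up; confluence would only matter if you wanted the rewriting to produce a canonical normal form, and the dimension count already does that job for you.
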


\begin{remark}
In order to check that the presentation above gives the same algebra described in \cite{DeConciniProcesiToricArr}, we can first notice that relation \eqref{eq:relazione_prodotto_xyz} implies that our algebra is generated in degree $1$, by elements of the form $e_{\{i\}; \emptyset}$ and $e_{\emptyset; \{i\}}$, that correspond respectively to the generators $\lambda_{a_i, \chi_i}$ and $\omega_i$ in \cite[p.~410]{DeConciniProcesiToricArr}.  Then our relation \eqref{eq:rel_prodotto_nullo_xyz} corresponds to relation (2)  of \cite[p.~410]{DeConciniProcesiToricArr}; our relation \eqref{eq:relazione_toro_xyz} corresponds to relation (1)  of \cite[p.~410]{DeConciniProcesiToricArr} and our relation \eqref{eq:relazione_final_unimod} corresponds to relation (20') that is implicit in \cite{DeConciniProcesiToricArr}.
\end{remark}

\begin{example}
	Continuing Example \ref{ex:unimod} and using the relation $-\chi_0+\chi_1+\chi_2=0$ 
	we obtain that the rational cohomology of the complement of arrangement $\exA'$ has a presentation with generators 
	\[\oomega_0, \oomega_1, \oomega_2, \psi_0, \psi_1, \psi_2\]
	where $-\psi_0 + \psi_1 + \psi_2 = 0$ and
	relation \eqref{eq:relazione_bar} (or equivalently relation \eqref{eq:relazione_final_unimod}) gives
	\[
	\oomega_0 \oomega_1 - \oomega_0 \oomega_2 + \oomega_1 \oomega_2 - \psi_0\psi_1 - \psi_0\psi_2 + \psi_1\psi_2=0.
	\]	
	Note that $\psi_0\psi_1 + \psi_0\psi_2 = \psi_0 \psi_0 = 0$ and hence the relation above can be simplified.
\end{example}

\section{Coverings of arrangements}\label{s:coverings}

Recall our setup from Section \ref{sec:TA}, and in particular that we consider a primitive arrangement $\A$ in a torus $T$. 

Given a lattice $\Lambda'$, $\Lambda \subseteq \Lambda' \subseteq \Lambda  \otimes \Q$ we consider the Galois covering $U \to T$ associated with the subgroup $\Lambda'^* \subseteq \Lambda^*  \simeq \pi_1(T)$ whose  group of deck automorphisms is $(\Lambda'/\Lambda)^* \simeq \operatorname{Gal}(U/T)$.

\begin{definition}
	Let $f:U\to T$ be a finite covering, and call $\A_U$ the lift of $\A$ through $f$ to the torus $U$. More precisely, let
	\[
	\A_U:=\bigcup_{H\in \A} \pi_0(f^{-1}(H)),
	\]
	the set of connected components of preimages of hypertori in $\A$.
	Moreover, given $i\in E$
	let
	\[
	a_i:=\vert \pi_0(f^{-1}(H_i)) \vert
	\]
	denote the number of connected components of $f^{-1}(H_i)$. Given $q\in f^{-1}(H_i)$, let 
	\begin{equation*}
		H_i^U(q) 
	\end{equation*} 
	denote the connected component of $f^{-1}(H_i)$ containing $q$.
	
\end{definition}

\begin{remark}\label{rem:charU}
	The previous definition ensures that $\A_U$ is again a primitive arrangement. It is, however, not necessarily central. 
	
	In fact, if we call $\widehat{\chi}:= f\circ \chi$ the character of $U$ induced by $\chi$, we see that the connected components $ f^{-1}(H_i)$ are associated with the (primitive) character $\frac{\widehat{\chi}_i}{a_{i}}$. More precisely, every $L\in \pi_0(f^{-1}(H_i))$ has equation
	\[
	\frac{\widehat{\chi}_i}{a_i} =  \frac{\widehat{\chi}_i}{a_i}(q)
	\]
	where $q$ is any point of $L$.
\end{remark}

\subsection{Logarithmic forms on coverings}

%



Our next task is to describe the logarithmic forms on $M(\A_U)$ associated with $\A_U$ according to the set-up of Section \ref{sec:log_forms}. 

Let $f:U\to T$ be a finite covering and let $M(\A_U)$ be as above.
The algebraic de Rham complex $\Omega_{M(\A_U)}^\bullet$ splits as direct sum of subcomplexes 
\begin{equation}
	\label{eq:decompos_seminvarianti}
	\Omega_{M(\A_U)}^\bullet \simeq \bigoplus_{\lambda \in \Lambda'/\Lambda} \Omega^\bullet_\lambda
\end{equation}
where $\Omega^\bullet_\lambda$ consists of forms $\alpha$ such that for any $\tau \in \operatorname{Gal}(U/T)$ we have that $\tau^*(\alpha) = \lambda(\tau) \alpha$. In particular the subcomplex of invariant forms $\Omega^\bullet_\mathbbm{1}$ is canonically identified with $\Omega^\bullet_{M(\A)}$.

For any $i\in E$ and any point $q\in f^{-1}(H_i)$ we
set
\begin{equation}\label{eq:def_omega_U}
	\omega_i^U(q) :=\frac{1}{2 \pi \sqrt{-1}} \dd \log \left(1 - e^{\frac{\widehat{\chi}_i}{a_i}-\frac{\widehat{\chi}_i}{a_i}(q)}\right)
\end{equation}
for the logarithmic form 
in $\Omega^1_{M(\A_U)}$ associated with $H_i(q)$.  Notice that this form does not depend on the choice of $q$ in the same connected component. 

Moreover, let
\begin{equation}
	\label{df:psiU}
	\psi_i^U:= \frac{f^*(\psi_i)}{a_i}
	= \dd\log e^{\frac{\widehat{\chi}_i}{a_i}
	},
\end{equation}
where the upper symbol $^*$ denotes as usual the pull-back.


More generally, given any $A\subseteq E$, choose $q\in f^{-1}(\cap_{i\in A} H_i)$ and 
let
%
\begin{equation}\label{eq:prodotti_omega}
	\oomega_A^U(q) := \prod_{i \in A} \oomega_i^U(q) 
	\quad
	\textrm{ and }
	\quad
	\omega_A^U(q) := \prod_{i \in A} \omega_i^U(q) ,
\end{equation}
where the factors are taken in increasing order with respect to the index $i$ and $\oomega_i^U(q) := 2 \omega^U_i(q) - \psi^U_i$.
%
Moreover, under the same set-up, for disjoint $A$ and $B$ let
\[
\oeta^U_{A,B}(q) := \prod_{i \in A \sqcup B}\oeta^U_i(q)
\]
where		$\oeta_i^U(q) = \oomega_i^U(q)$ if $i \in A$ and $\oeta_i^U(q) = \psi_i^U$ if $i \in B$.
\begin{proposition}
	Let $A$ be any set of indices and let $W$ be a connected component of $\bigcap_{i\in A} H_i$. Let $p$ be any point in $W$.
	The class
	\[
	\sum_{q \in f^{-1}(p)} \oomega_A^U(q)
	\]
	is invariant with respect to the group $G$ of deck automorphisms of $f$ and it does not depend on the choice of the point $p$ in $W$.
\end{proposition}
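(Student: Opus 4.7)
The plan is to establish the two properties separately, using one key structural observation: the form $\omega_i^U(q)$ depends on $q \in f^{-1}(H_i)$ only through the value $e^{\widehat{\chi}_i(q)/a_i}\in \C^*$, and by \Cref{rem:charU} this value is constant on the connected component $H_i^U(q)$. Consequently $\oomega_i^U(q)$, and hence $\oomega_A^U(q) = \prod_{i \in A} \oomega_i^U(q)$, depends on $q$ only through the tuple $(H_i^U(q))_{i \in A}$ of components of the $f^{-1}(H_i)$ that contain $q$.

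For independence from $p \in W$, I would decompose the fiber according to the connected components of $f^{-1}(W)$: write $f^{-1}(p) = \bigsqcup_{\widetilde W}\bigl(f^{-1}(p) \cap \widetilde W\bigr)$, where $\widetilde W$ ranges over $\pi_0(f^{-1}(W))$. Each such $\widetilde W$ is contained in a single component $L_i(\widetilde W) \in \pi_0(f^{-1}(H_i))$ for every $i \in A$, so by the observation above every $q \in \widetilde W$ yields the same form, which I denote $\oomega_A^U(\widetilde W)$. The restriction $f|_{\widetilde W}\colon \widetilde W \to W$ is a covering of the connected layer $W$, hence the cardinality $d_{\widetilde W} := |f^{-1}(p) \cap \widetilde W|$ equals its degree and is independent of $p \in W$. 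Therefore
\[
\sum_{q \in f^{-1}(p)} \oomega_A^U(q) \;=\; \sum_{\widetilde W \in \pi_0(f^{-1}(W))} d_{\widetilde W}\, \oomega_A^U(\widetilde W),
\]
which manifestly does not depend on the chosen $p$, and in fact shows that the statement holds as an equality of forms, not merely of cohomology classes.

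For $G$-invariance, I set $\xi_i := e^{\widehat{\chi}_i/a_i}$ and observe that because $\xi_i^{a_i} = e^{\widehat{\chi}_i}$ is pulled back from $T$, each $\tau \in G$ acts as $\xi_i \circ \tau = \zeta_i(\tau)\, \xi_i$ for some $a_i$-th root of unity $\zeta_i(\tau)\in \C^*$ (constant on $U$ since $\tau$ is a translation of the group $U$). A short direct computation using $\xi_i(\tau^{-1}(q)) = \zeta_i(\tau)^{-1}\xi_i(q)$ then yields $\tau^*\omega_i^U(q) = \omega_i^U(\tau^{-1}(q))$, while $\tau^*\psi_i^U = \psi_i^U$ since the logarithmic derivative kills the multiplicative constant. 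Hence $\tau^*\oomega_A^U(q) = \oomega_A^U(\tau^{-1}(q))$, and since $\tau$ permutes the fiber $f^{-1}(p)$, the substitution $q' = \tau^{-1}(q)$ gives
\[
\tau^*\sum_{q \in f^{-1}(p)} \oomega_A^U(q) \;=\; \sum_{q \in f^{-1}(p)} \oomega_A^U(\tau^{-1}(q)) \;=\; \sum_{q' \in f^{-1}(p)} \oomega_A^U(q'),
\]
as required.

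The only point needing a little care is verifying that $f|_{\widetilde W}\colon \widetilde W \to W$ is genuinely a covering with constant fiber cardinality; this follows because $f$ restricts to a finite covering of $\bigcap_{i \in A} H_i$ and any connected component $\widetilde W$ of its preimage of $W$ maps surjectively onto $W$ (its image in $W$ is open, closed, and non-empty, and $W$ is connected), necessarily with a constant number of sheets.
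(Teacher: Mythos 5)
Your proof is correct and, for the $G$-invariance claim, it is essentially the paper's argument expressed in multiplicative rather than additive notation: both establish the equivariance $\tau^*\oomega_A^U(q) = \oomega_A^U(\tau^{-1}(q))$ and then conclude by observing that $\tau$ permutes the fiber $f^{-1}(p)$. Where you go beyond the paper is in the treatment of independence from $p$: the paper simply writes ``the claim follows'' after the equivariance computation, implicitly relying on the earlier remark that $\omega_i^U(q)$ depends only on the component $H_i^U(q)$, whereas you make this explicit by decomposing $f^{-1}(p)$ according to $\pi_0(f^{-1}(W))$ and noting that each $f|_{\widetilde W}\colon\widetilde W\to W$ is a finite covering of a connected base, so the fiber counts $d_{\widetilde W}$ are constant. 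That extra care is harmless and arguably an improvement, since equivariance alone does not literally imply independence from $p$ (the group permutes a single fiber $f^{-1}(p)$, not different fibers), so the locally-constant-on-$W$ argument is the honest reason.
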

\begin{proof} The only nontrivial case is when the characters associated with the indices in $A$ are linearly independent, otherwise $\oomega^U_A(q)=0$.
	
	Let $\tau \in G $. Using the definitions we have the equalities
	\begin{align*}
		\tau^*({\omega}^U_i(q)) & = \tau^*\left(\frac{1}{2 \pi \sqrt{-1}} \dd\log\left(1-e^{\frac{\chi_i}{a_i}-\frac{\chi_i}{a_i}(q)}\right) \right)  \\
		& =  \frac{1}{2 \pi \sqrt{-1}}\dd\log\left(1-e^{\frac{\chi_i}{a_i} + \frac{\chi_i}{a_i}(\tau) -\frac{\chi_i}{a_i}(q)}\right) \\
		&= \frac{1}{2 \pi \sqrt{-1}}\dd\log\left(1-e^{\frac{\chi_i}{a_i} -\frac{\chi_i}{a_i}(\tau^{-1}q)}\right)\\
		& = {\omega}^U_i(\tau^{-1}(q)).
	\end{align*}	
	Since the forms $\psi_i$ are translation-invariant, we obtain immediately also
	\[
	\tau^*({\oomega}^U_i(q))= {\oomega}^U_i(\tau^{-1}(q)).
	\]
	If we write $A=\{a_1,\ldots,a_k\}$, we see that every form 
	\[
	{\oomega}^U_A(q) = {\oomega}^U_{a_1}(q) {\oomega}^U_{a_1}(q)\cdots {\oomega}^U_{a_k}(q)
	\] 
	satisfies 
	\[
	\tau^* \left({\oomega}^U_A(q) \right)= {\oomega}^U_A(\tau^{-1}(q)).
	\]
	The claim follows.
\end{proof}

The previous result allows us to give the following definition.

\begin{definition} \label{def:oomegaf}
	Let $\A=\{H_i\}_{i\in E}$ be a toric arrangement in the torus $T$ and consider a finite covering $f:U\to T$.  Consider an independent set $A\subseteq E$, let $W$ be a connected component of $\cap_{i\in A} H_i$ and choose $p\in W$.
	Since the pullback map $f^*$ is injective, we can define forms $\oomega_{W,A}^f$ and $\omega_{W,A}^f$ as the unique forms on $M(\A)$ such that
	\[
	f^*(\oomega_{W,A}^f) = \frac{1}{|\cap_{i \in A} H_i^U(q_0)\cap f^{-1}(p)|} \sum_{q \in f^{-1}(p)} \oomega_A^U(q)
	\]
	and
	\[
	f^*(\omega_{W,A}^f) = \frac{1}{|\cap_{i \in A} H_i^U(q_0)\cap f^{-1}(p)|} \sum_{q \in f^{-1}(p)} \omega_A^U(q)
	\]
	where $q_0$ is any point in $f^{-1}(p)$. 
\end{definition}

\begin{remark} \label{rem:fstaromega}
	If the arrangement $\A_U$ is unimodular the formula in the definition above becomes
	\[
	f^*(\oomega_{W,A}^f) = \frac{1}{|L\cap f^{-1}(p)|} \sum_{q \in f^{-1}(p)} \oomega_A^U(q)
	\]
	where $L$ is any connected component of $f^{-1}(W)$.
\end{remark}

\begin{example}\label{ex:cover}
	We can now consider the arrangement $\exA$ of \Cref{ex:primo} and the covering $f:U = (\C^*)^2 \to T=(\C^*)^2$ given by $(u,v) \mapsto (u, v^3)$. The arrangement $\exA_U$ is unimodular and is given by the $7$ hypertori with equations $u=1$, $v= e^{\frac{2 \pi i a}{3}}
	$ and $uv = e^{\frac{2 \pi i b}{3}}$ for 
	$a,b=0,1,2$.
	
	The three subarrangements of $\exA_U$ containing respectively the hypertori passing through $p_1=(1,1), p_2=(1, e^{\frac{2 \pi i }{3}}), p_3=(1, e^{\frac{4 \pi i }{3}})$ are, up to translation, isomorphic to the unimodular arrangement $\exA'$, while the  subarrangements passing through the other six points are all isomorphic to the boolean arrangement in $(\C^*)^2$ given by the hypertori with equations $x=1$ and $y=1$.
	\begin{figure}[htb]
		\begin{tikzpicture}[scale=0.4]
		\draw [gray] (0, 0) rectangle (6,6);
		\node [draw=none,fill=none] at (7-3,3-4) {$U$};
		\draw [line width=1.0pt](1,0) -- (1,6);
		\draw [line width=1pt] (0,1) -- (6,1);
		\draw [line width=1pt] (0,3) -- (6,3);
		\draw [line width=1pt] (0,5) -- (6,5);
		\draw [line width=1pt](0,4) -- (2,6);
		\draw [line width=1pt](0,2) -- (4,6);
		\draw [line width=1pt](0,0) -- (6,6);
		\draw [line width=1pt](2,0) -- (6,4);
		\draw [line width=1pt](4,0) -- (6,2);
		\foreach \x in {1,2,3}{
			\draw (1,2*\x-1) node [circle, draw, fill=black, inner sep=0pt, minimum width=3pt,label={[shift={(-0.2,-0.1)}]$p_\x$}] {};
			\draw (3,2*\x-1) node [circle, draw, fill=black, inner sep=0pt, minimum width=3pt,label={[shift={(-0.2,-0.1)}]$q_\x$}] {};
			\draw (5,2*\x-1) node [circle, draw, fill=black, inner sep=0pt, minimum width=3pt,label={[shift={(-0.2,-0.1)}]$r_\x$}] {};};
		\draw [line width=1pt,->] (3+3.5,3) -- node[above]{$f$} (4+3.5,3);
		\draw [gray] (0+8, 0) rectangle (6+8,6);
		\draw [line width=1pt] (0+8,3) -- (6+8,3);
		\draw [line width=1pt](0+8,0) -- (2+8,6);
		\draw [line width=1pt](2+8,0) -- (4+8,6);
		\draw [line width=1pt](4+8,0) -- (6+8,6);
		\draw (1+8,3) node [circle, draw, fill=black, inner sep=0pt, minimum width=3pt,label={[shift={(-0.2,0)}]$p$}] {};
		\draw (3+8,3) node [circle, draw, fill=black, inner sep=0pt, minimum width=3pt,label={[shift={(-0.2,0)}]$q$}] {};
		\draw (5+8,3) node [circle, draw, fill=black, inner sep=0pt, minimum width=3pt,label={[shift={(-0.2,0)}]$r$}] {};
		\node [draw=none,fill=none] at (7+8-3,3-4) {$T$};
		\draw [line width=1pt](1+8,0) -- (1+8,6);
		\end{tikzpicture}
		\caption{A picture on the compact torus of the covering described in Example \ref{ex:cover}.} \label{fig:covering}
	\end{figure}
	We have the form 
	\begin{align*}
		f^*(\oomega^f_{p,\{0,1\}}) = & \frac{-1}{4\pi^2}\left(
		\frac{1+uv}{1-uv}\frac{\dd(uv)}{uv}  
		\frac{1+v}{1-v}\frac{\dd v}{v} +
		\frac{1+\zeta_3 uv}{1-\zeta_3 uv}\frac{\dd(uv)}{uv}  
		\frac{1+\zeta_3 v}{1-\zeta_3 v}\frac{\dd v}{v} + \right.\\ &+ \left. 
		\frac{1+\zeta_3^2 uv}{1-\zeta_3^2 uv}\frac{\dd(uv)}{uv}  
		\frac{1+\zeta_3^2 v}{1-\zeta_3^2 v}\frac{\dd v}{v} 
		\right) = \\
		= & \frac{-3}{4\pi^2} \frac{u^3 v^6+u^3v^3 +4u^2v^3+4uv^3+v^3 +1}{u v \left(v^3-1\right) \left(u^3 v^3-1\right)} \dd u \dd v
	\end{align*}
	where $\zeta_3 = e^{\frac{2 \pi i}{3}}$ and hence, taking the pushforward and dividing by the degree, we get
	\begin{align*}
		\oomega^f_{p, \{0,1\}} = & \frac{-1}{4\pi^2}\frac{x^3 y^2+x^3y +4x^2y+4xy+y +1}{xy \left(y-1\right) \left(x^3y-1\right)} \dd x \dd y.
	\end{align*}
\end{example}

\section{Separation}\label{s:separation}

To deal with the non-unimodular case, the following definition turns out to be useful.

\begin{definition}\label{def:separates}
	Let $A$ be an independent subset of $E.$ We say that a covering $f:U \to T$ \emph{separates} $A$ if, for any connected component $W$ of $\cap_{i \in A} H_i$ and for all $i \in A$ there exist $q_i\in f^{-1}({H_i})$ such that 
	$f(\cap_{i \in A} H_i^U(q_i)) = W$.
\end{definition}

\begin{remark}
	If $f:U \to T$ is a covering such that the arrangement $\A_U$ is unimodular, then $f$ separates $A$ for all independent set $A \subset E$.
\end{remark}

\begin{proposition}\label{ex:sepind}
	Let $A\subseteq E$ be an independent set. 
	There exists a covering $f:U \to T$ that separates $A$.
\end{proposition}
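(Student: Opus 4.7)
The plan is to find a finite covering $f:U\to T$ such that the lifted arrangement $\A_U$ is \emph{unimodular on $A$}, i.e.\ the multiplicity $[\Lambda'^A:\Lambda'_A]$ of $A$ computed in the character lattice $\Lambda'$ of $U$ equals $1$; here $\Lambda'_A$ denotes the $\Z$-span of the primitive characters $\widehat\chi_i/a_i$ (cf.\ Remark~\ref{rem:charU}) and $\Lambda'^A := \Lambda'\cap V_A$, where $V_A := \Q\otimes_\Z\Lambda_A$. I will first deduce separation from this unimodularity-on-$A$ condition, and then construct such a covering explicitly.

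\emph{Sufficiency of unimodularity on $A$.} Fix a connected component $W$ of $\bigcap_{i\in A}H_i$, pick $p\in W$ and a lift $\tilde p\in f^{-1}(p)$, and set $q_i:=\tilde p$ for every $i\in A$. The set $Y:=\bigcap_{i\in A}H_i^U(\tilde p)$ contains $\tilde p$, and by unimodularity on $A$ it is a connected coset of the subtorus $\bigcap_{i\in A}\ker(\widehat\chi_i/a_i)\subseteq U$. Its image $f(Y)$ is a connected subset of $\bigcap_{i\in A}H_i$ with the same dimension as $W$, so it is open in $W$; it is closed by properness of the finite covering $f$; connectedness of $W$ then forces $f(Y)=W$.

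\emph{Construction.} I would take the lattice
\[
\Lambda' := \Lambda + \tfrac{1}{m(A)}\Lambda_A\subseteq\Lambda\otimes\Q
\]
and let $f:U\to T$ be the covering associated with $\Lambda\hookrightarrow\Lambda'$. The core lattice computation is that $\Lambda'\cap V_A = \tfrac{1}{m(A)}\Lambda_A$: if $x=y+\tfrac{1}{m(A)}z$ with $y\in\Lambda$, $z\in\Lambda_A$ and $x\in V_A$, then $y\in\Lambda\cap V_A=\Lambda^A$, and since $m(A)\cdot\Lambda^A\subseteq\Lambda_A$ it follows that $y\in\tfrac{1}{m(A)}\Lambda_A$. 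Each $\chi_i$ with $i\in A$ is primitive in $\Lambda$ and hence also in the sublattice $\Lambda_A$, so the largest $a$ with $\chi_i/a\in\Lambda'$ is precisely $a_i=m(A)$. Therefore $\Lambda'_A = \bigl\langle \chi_i/m(A)\bigr\rangle_{i\in A} = \tfrac{1}{m(A)}\Lambda_A = \Lambda'^A$, so $m_{\Lambda'}(A)=1$.

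The main subtlety I anticipate is in this last step: one must check that the enlargement by $\tfrac{1}{m(A)}\Lambda_A$ is simultaneously large enough to make $\Lambda'^A = \tfrac{1}{m(A)}\Lambda_A$ (so that all $m(A)$ components of $\bigcap_{i\in A}H_i$ arise as images of intersections in $U$) and small enough that the primitive divisor of each $\chi_i$ in $\Lambda'$ does not exceed $m(A)$ (otherwise the primitive characters would span a proper sublattice of $\Lambda'^A$). Both points rely on $\chi_i$ being primitive in $\Lambda_A$, which follows from its primitivity in $\Lambda$ via the observation that any relation $\chi_i=cx$ in $\Lambda_A$ with $c\geq 2$ and $x\in\Lambda_A\subseteq\Lambda$ would contradict primitivity of $\chi_i$ in $\Lambda$.
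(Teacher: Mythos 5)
Your proof is correct, and your lattice
\[
\Lambda' = \Lambda + \tfrac{1}{m(A)}\Lambda_A
\]
is in fact \emph{the same} lattice as the one the paper uses (the paper writes it as $\langle \chi_i/m(A)\rangle_{i\in A}\oplus\Gamma$ where $\Lambda=\Lambda^A\oplus\Gamma$; the two descriptions agree because $\Lambda^A\subseteq \tfrac{1}{m(A)}\Lambda_A$). Where you diverge from the paper is in the verification that this covering separates $A$. The paper proves it by a double-counting argument: it counts the connected components of $f^{-1}\bigl(\bigcap_{i\in A}H_i\bigr)$ in two ways (once grouping by choices of components $H_i^U$ and once grouping by fibers over the components $W$ downstairs) to conclude that each $\bigcap_i H_i^U(q)$ is connected. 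You instead compute directly in the character lattice that $m_{\Lambda'}(A)=1$ (making $A$ unimodular in the cover) and then deduce $f(Y)=W$ by the open-closed argument on the connected set $W$. Your route has the advantage of isolating the key mechanism explicitly -- the cover kills the multiplicity of $A$ -- and it also supplies the implication ``$\bigcap_i H_i^U(q)$ connected $\Rightarrow$ its image is all of $W$,'' which the paper leaves implicit. The paper's counting argument, on the other hand, is purely combinatorial and avoids appealing to properness of $f$ and the open/closed dichotomy. One small simplification to your write-up: the fact that $a_i=m(A)$ needs only that $\{\chi_j\}_{j\in A}$ is a $\Z$-basis of $\Lambda_A$ (which follows from independence of $A$), so the discussion of primitivity of $\chi_i$ in $\Lambda_A$ is not actually used.
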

\begin{proof}
Let $\Gamma$ be a direct summand of $\Lambda^A$ in $\Lambda$. Hence $\Lambda = \Lambda^A \oplus \Gamma$.
	Consider the lattice 
	\[
	\Lambda(A):= \left\langle\frac{\chi_i}{m(A)}\right\rangle_{i \in A }\oplus\Gamma \subseteq  \Lambda \otimes \mathbb Q.
	\]
	We have the tower of subgroups 
	\[
	\Lambda_A \subseteq \Lambda^A \subseteq \left\langle\frac{\chi_i}{m(A)}\right\rangle_{i \in A }
	\]
	where by definition $[\Lambda^A:\Lambda_A]=m(A)$. Moreover since $A$ is independent we have $[\big\langle\frac{\chi_i}{m(A)}\big\rangle_{i \in A }:\Lambda_A] = m(A)^{|A|}$. Hence we have $[\big\langle\frac{\chi_i}{m(A)}\big\rangle_{i \in A }:\Lambda^A] = 
	m(A)^{\vert A \vert -1}$.
	The inclusion $\Lambda \subseteq \Lambda(A)$ induces a covering  $f: U\to T$ of degree 
	$[\Lambda(A): \Lambda] = [\big\langle\frac{\chi_i}{m(A)}\big\rangle_{i \in A }:\Lambda^A] = 
	m(A)^{\vert A \vert -1}$. The first equality follows since $\Gamma$ is a direct summand of both terms in the left hand side. 
	
	This covering separates $A$.
	In fact, we claim that for every connected component $W$ of $\bigcap_{i\in A}H_i$ and any choice of a point $q\in f^{-1}(W)$, the intersection $\bigcap_{i\in A}H_i^U(q)$ is connected. To prove this claim, let $k$ denote the number of connected components of $\bigcap_{i\in A} H_i^U(q)$. We count in two different ways the number of connected components of $f^{-1}(\bigcap_{i\in A} H_i)$. 
	On the one hand, for every $i$ we have $m(A)$ connected components of $f^{-1}(H_i)$ and, once we have chosen for every $i$ a connected hypertorus in $f^{-1}(H_i)$, their intersection has $k$ connected components. In this way we have $k m(A)^{\vert A \vert}$ such components.
	On the other hand, the number of connected components of the preimage of each of the $m(A)$ connected components of $\bigcap_{i\in A} H_i$ is at most $m(A)^{\operatorname{rk}(A) -1}$, the degree of the covering -- hence we obtain a count of at most $m(A)^{\operatorname{rk}(A)} = m(A)^{|A|}$ components. We conclude $k=1$. 
\end{proof}
The following theorem motivates our definition of separating coverings.
\begin{theorem} \label{th:coverings}
	Let $A\subset E$ be an independent set. If $f:U \to T$ and $g:V \to T$ are coverings that separate $A$, then $\oomega_{W,A}^f = \oomega_{W,A}^g$.
	Analogously we have $
	\omega_{W,A}^f = \omega_{W,A}^g
	$.
\end{theorem}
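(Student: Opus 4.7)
My plan is to pass to a common refinement $h\colon Z\to T$ of $f$ and $g$ and to establish the chain $\oomega_{W,A}^f=\oomega_{W,A}^h=\oomega_{W,A}^g$ (the argument for the unbarred forms being entirely parallel, as all computations below work for $\omega$ in place of $\oomega$). Let $\Lambda'_f,\Lambda'_g\subseteq\Lambda\otimes\Q$ be the sublattices corresponding to the covers $f$ and $g$, and let $\Lambda'_h:=\Lambda'_f+\Lambda'_g$. The associated Galois cover $h$ dominates both $f$ and $g$: there exist finite coverings $\pi_f\colon Z\to U$ and $\pi_g\colon Z\to V$ with $h=f\circ\pi_f=g\circ\pi_g$. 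By symmetry it then suffices to show $\oomega_{W,A}^f=\oomega_{W,A}^h$.

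Before making the comparison I need to check that $h$ itself separates $A$. More generally, if $f\colon X\to T$ separates $A$ and $\pi\colon Y\to X$ is any finite cover of tori, then $f\circ\pi$ separates $A$: given $q_0\in X$ with $f(\bigcap_i H_i^X(q_0))=W$ (granted by separation of $f$), pick any $\tilde q_0\in\pi^{-1}(q_0)$ and set $\tilde L_i:=H_i^Y(\tilde q_0)$; then $\pi(\tilde L_i)\subseteq H_i^X(q_0)$ yields $(f\circ\pi)(\bigcap_i\tilde L_i)\subseteq W$, while the reverse inclusion holds because each connected component of $\bigcap_i\tilde L_i$ is sent by $\pi$ onto a component of $\bigcap_i H_i^X(q_0)$, and each such component is in turn mapped onto $W$ by $f$. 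The other key ingredient is the pullback formula
\[
\pi_f^*\bigl(\oomega_i^U(L)\bigr)=\sum_{\tilde L\in\pi_0(\pi_f^{-1}(L))}\oomega_i^Z(\tilde L)
\]
for every component $L$ of $f^{-1}(H_i)$. This follows from the elementary factorisation $1-x^b=\prod_{k=0}^{b-1}(1-\zeta^k x)$, where $\zeta$ is a primitive $b$-th root of unity and $b$ is the number of components of $\pi_f^{-1}(L)$, together with the relation $\pi_f^*\psi_i^U=b\,\psi_i^Z$; these combine to make the correction $\oomega=2\omega-\psi$ distribute correctly across the sum.

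Since $h^*=\pi_f^*\circ f^*$ is injective on algebraic forms, it is enough to verify $h^*(\oomega_{W,A}^f)=h^*(\oomega_{W,A}^h)$. Plugging the pullback formula into \Cref{def:oomegaf} and expanding the product over $i\in A$ gives
\[
h^*(\oomega_{W,A}^f)=\frac{1}{N_f}\sum_{q\in f^{-1}(p)}\ \sum_{(\tilde L_i)_{i\in A}}\ \prod_{i\in A}\oomega_i^Z(\tilde L_i),
\]
where each $\tilde L_i$ ranges over $\pi_0(\pi_f^{-1}(H_i^U(q)))$ and $N_f=|\bigcap_i H_i^U(q_0)\cap f^{-1}(p)|$, while from the analogous definition
\[
h^*(\oomega_{W,A}^h)=\frac{1}{N_h}\sum_{\tilde q\in h^{-1}(p)}\prod_{i\in A}\oomega_i^Z\bigl(H_i^Z(\tilde q)\bigr).
\]
I would then match the two double sums by grouping the pairs $(q,(\tilde L_i))$ on the left according to the underlying tuple of components of $h^{-1}(H_i)$, and verify that the multiplicity with which each ``compatible'' tuple is counted is exactly compensated by the ratio $N_h/N_f$ of fibre sizes in the tower. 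The main obstacle I foresee is handling the ``incompatible'' tuples $(\tilde L_i)$ --- those for which no $\tilde q\in\pi_f^{-1}(q)$ lies simultaneously in all the $\tilde L_i$. Such tuples contribute to the expansion on the left but have no point-level counterpart on the right, so I expect to need the Galois action of the deck group of $\pi_f$ together with the separation hypothesis to argue that these extra contributions either cancel pairwise or reassemble into terms for other layers of $\bigcap_i H_i$ that are already accounted for in both normalising constants.
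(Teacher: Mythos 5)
Your high-level strategy---pass to a common cover, check that separation propagates up the tower, and compare via the pullback formula $\pi_f^*\bigl(\oomega_i^U(L)\bigr)=\sum_{\tilde L}\oomega_i^Z(\tilde L)$---matches the paper's, and those pieces are correct. But the obstacle you flag at the end is a genuine gap, and the resolution you guess at (cancellation of incompatible tuples via the deck action) is not what happens: there is no cancellation, and leaving that step as a hope means the argument does not close.

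The missing ingredient is a reformulation of $f^*(\oomega^f_{W,A})$ that removes the fibre sum and the normalising constant of Definition~\ref{def:oomegaf} altogether. This is the paper's Remark~\ref{rem:riparametrizzazione}: when $f$ separates $A$,
\[
f^*(\oomega^f_{W,A}) \;=\; \sum_{\substack{\bj \\ \bigcap_{i\in A} H^U_{i, j_i} \subseteq f^{-1}(W)}}\; \prod_{i \in A} \oomega^U_{i,j_i},
\]
where $\bj=(j_i)_{i\in A}$ ranges over tuples of connected components of the $f^{-1}(H_i)$. To derive it, group the points $q\in f^{-1}(p)$ by the tuple $(H_i^U(q))_{i\in A}$; by deck-group homogeneity each realised tuple is hit by exactly $N_f$ points, which cancels the normalisation, and the separation hypothesis ensures that the set of realised tuples is exactly $\{\bj : \bigcap_i H^U_{i,j_i}\subseteq f^{-1}(W)\}$ (separation forces $f\bigl(\bigcap_i H^U_{i,j_i}\bigr)$ to be a \emph{single} layer of $\bigcap_i H_i$ for every tuple $\bj$, not just for the one witnessing separation). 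Once you use this, pulling back through $\pi_f$ and expanding the product immediately yields a tuple-indexed sum over components of $h^{-1}(H_i)$, subject only to a set-theoretic containment condition, and comparing with the analogous expression for $h^*(\oomega^h_{W,A})$ reduces to an equivalence of two containment conditions, which again follows easily from the separation hypothesis. Since neither side is expressed via points of a single fibre any more, the point-level ``compatibility over $q$'' that was causing you trouble simply never appears.
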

In the proof we will make use of the following remark.
\begin{remark}\label{rem:riparametrizzazione}
	For every index $i$, let $H^U_{i,1},\ldots H^U_{i,m_i}$, denote the connected components of $f^{-1}(H_i)$ and $ \oomega^U_{i,j} := \oomega_{H^U_{i,j}}$ be the associated forms.

	
	If we assume that $f$ separates $A$, then Definition~\ref{def:oomegaf} is equivalent to
	\[
	f^*(\oomega^f_{W,A}) = \sum_{\substack{\uno \leq \bj \leq \bm\\ \cap_i H^U_{i, j_i} \subseteq f^{-1}(W)}} \prod_{i \in A} \oomega^U_{i,j_i}
	\]
	where the sum is indexed using the componentwise ordering among integer $A$-tuples $\uno:=(1, \ldots, 1)$,  $\bj:=(j_i)_{i\in A}$, $\bm := (m_i)_{i\in A}$.
\end{remark}

\begin{proof}[Proof of Theorem~\ref{th:coverings}]
	We give the proof for $\oomega_{W,A}^f = \oomega_{W,A}^g$, the other case being identical.
	
	The theorem follows in its generality if we first assume that the statement holds when $g = f \circ h$, where $h:V\to U$ is a finite covering. In this case  we have
	\[
	\xymatrix{
		V \ar[r]_h \ar@/^1pc/[rr]^g & U \ar[r]_f& T
	}
	\]
	and
	\begin{align*}
		g^*(\oomega_{W,A}^f)  & = h^*(f^*(\oomega_{W,A}^f)) =\\
		&=h^*\left( 
		\sum_{\substack{\uno \leq \bj \leq \bm\\ \cap_i H^U_{i, j_i} \subseteq f^{-1}(W)}} \prod_{i \in A} \oomega^U_{i,j_i}\right)
	\end{align*}
	where the multi-index $\bj$ is as in the Remark~\ref{rem:riparametrizzazione}. The last equality follows since $f$ separates $A$.
	
	Again Remark~\ref{rem:riparametrizzazione} applied to $g$ gives
	\[
	g^*(\oomega_{W,A}^g) = \sum_{\substack{\uno \leq \bk \leq \bn\\ \cap_i H^V_{i, k_i} \subseteq g^{-1}(W)}} \prod_{i \in A} \oomega^V_{i,k_i}.
	\]
	where for all $i$, $H^V_{i, 1}, \ldots, H^V_{i,n_i}$ are the connected components of $g^{-1}(H_i)$, $\bk = (k_1, \ldots, k_{|A|})$, and $\bn = (n_1, \ldots, n_{|A|})$.
	
	Now we have that 
	\[
	h^*(\oomega_{i,j_i}^U)= \sum_{h(H^V_{i,k_i}) = H^U_{i,j_i}} \oomega_{i,k_i}^V.
	\]

	Hence from the previous equality we get
	\begin{align*}
		g^*(\oomega_{W,A}^f)  & = \sum_{\substack{\uno \leq \bj \leq \bm\\ \cap_i H^U_{i, j_i} \subseteq f^{-1}(W)}} \prod_{i \in A} h^*(\oomega^U_{i,j_i}) \\
		& = \sum_{\substack{\uno \leq \bj \leq \bm\\ \cap_i H^U_{i, j_i} \subseteq f^{-1}(W)}} \prod_{i \in A} \sum_{h(H^V_{i,k_i}) = H^U_{i,j_i}} \oomega_{i,k_i}^V \\
		& =  \sum_{\substack{\uno \leq \bj \leq \bm\\ \cap_i H^U_{i, j_i} \subseteq f^{-1}(W)}} \sum_{\substack{\uno \leq \bk \leq \bn\\h(H^V_{i,k_i}) = H^U_{i,j_i}}} \prod_{i \in A}  \oomega_{i,k_i}^V \\
		&  = \sum_{\substack{\uno \leq \bk \leq \bn\\ \cap_i H^V_{i, k_i} \subseteq g^{-1}(W)}} \prod_{i \in A} \oomega^V_{i,k_i} \\ 
		& = g^*(\oomega_{W,A}^g).
	\end{align*}
	Finally, in the general case of two coverings $f:U \to T$ and $g:V \to T$, we can consider the diagram
	\[
	\xymatrix{
		& V'\ar[ld]_h \ar[rd] \ar[dd]^{g'}&\\
		U\ar[rd]_f & & V \ar[ld]^g\\
		& T &
	}
	\]
	where $h:V' \to U$ is the pullback of $g$ by $f$ and $g' = f \circ h$. Since $f$ separates $A$, then also $g'$ separates $A$ and we apply the first part of the proof to the maps $f$ and $g'$.
\end{proof}

\begin{remark}\label{rem:pf}
	Since the covering $f:U \to V$ is finite, we have that $\oomega_{W,A}^f = f_* \oomega_A^U(q)$ for any $q \in f^{-1}(W)$ where $f_*$ is the pushforward associated with the covering map $f$.
\end{remark}

Using Theorem~\ref{th:coverings}, we can state the following definition.
\begin{definition}\label{def:omega_WA}
	Given $A \subset E$ independent and given $W$ a connected component of $\cap_{i \in A}H_i$, we define
	\[
	\oomega_{W,A} := \oomega_{W,A}^f
	\]
	and
	\[
	\omega_{W,A} := \omega_{W,A}^f
	\]
	where $f:U \to T$ is any covering that separates $A$.
\end{definition}

\begin{remark} 
	We would like to convince the reader that the definition of the forms $\oomega_{W,A}$ and $\omega_{W,A}$ given above is the most natural choice in order to provide a set of form generating the cohomology of the toric complement.
	
	As seen in \eqref{eq:decompos_seminvarianti}, once  we fix a covering $f:U \to T$ with Galois group $G$, the $G$-module $\Omega^1(M(\A_U))$ has a natural decomposition as a direct sum of semi-invariant modules associated with the characters of $G$.
	The forms defined above can be identified with certain $G$-invariant forms on $M(\A_U)$.
	We have that 
	\[
	\Omega_\lambda^k \Omega_{\lambda'}^{k'} \subseteq \Omega_{\lambda + \lambda'}^{k+k'}.
	\]
	In particular, if the sum of the characters of the factors is the trivial character, we get invariant forms, which correspond to forms on $M(\A)$.
	
	The hypothesis that $f$ separates $A$ guarantees that we obtain enough 
	semi-invariant $1$-forms associated with the hypertori $f^{-1}(H_i)$, for $i \in A,$ 
	in order to obtain $m(A)$ independent invariant classes.
\end{remark}

\begin{lemma}\label{lem:prodsum}
	If $A, A'\subseteq E$ are such that $A\sqcup A'$ is an independent set and $W$, resp.\ $W'$ are a choice of a connected component of $\bigcap_{i\in A} H_i$, resp.\ $\bigcap_{i\in A'} H_i$, we can compute
	\[
	\oomega_{W,A} \oomega_{W',A'}=
	(-1)^{\ell({A,A'})} \sum_{L\in \pi_0(W\cap W')}
	\oomega_{L,A\sqcup A'}.
	\]
\end{lemma}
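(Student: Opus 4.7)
The plan is to lift both sides to a suitably fine covering $f:U\to T$ where $\A_U$ is unimodular and Remark~\ref{rem:riparametrizzazione} applies directly. First I would choose such an $f$, which can be arranged (for instance by iterating the construction of Proposition~\ref{ex:sepind}, or by pulling back to a lattice fine enough to trivialize all relevant multiplicities) so as to separate each of $A$, $A'$ and $A\sqcup A'$ simultaneously. Since $f^*$ is injective (Definition~\ref{def:oomegaf}), it will suffice to verify the asserted identity after applying $f^*$.

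Next I would invoke Remark~\ref{rem:riparametrizzazione} to write
\[
f^*(\oomega_{W,A})=\sum_{\bj}\prod_{i\in A}\oomega^U_{i,j_i},\qquad f^*(\oomega_{W',A'})=\sum_{\bj'}\prod_{i\in A'}\oomega^U_{i,j'_i},
\]
with factors in the increasing order of $E$ and the sums ranging over those multi-indices $\bj$, resp.\ $\bj'$, such that $\bigcap_{i\in A} H^U_{i,j_i}\subseteq f^{-1}(W)$, resp.\ $\bigcap_{i\in A'}H^U_{i,j'_i}\subseteq f^{-1}(W')$. Multiplying these two expansions produces products whose factors appear in the concatenation order $(A,A')$; sorting each product into the increasing order of $A\sqcup A'$ introduces exactly the sign $(-1)^{\ell(A,A')}$ (Definition~\ref{def:ell}) by anticommutativity of $1$-forms. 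Writing $\bk:=(\bj,\bj')$ as a single multi-index on $A\sqcup A'$, this yields
\[
f^*(\oomega_{W,A})\cdot f^*(\oomega_{W',A'})=(-1)^{\ell(A,A')}\sum_{\bk}\prod_{i\in A\sqcup A'}\oomega^U_{i,k_i},
\]
where the sum is over those $\bk$ whose restrictions to $A$ and $A'$ satisfy the two original component conditions.

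To finish, I would translate this pair of conditions into a single one and recognize the resulting sum. Unimodularity of $\A_U$ together with the independence of $A\sqcup A'$ ensures that for every such $\bk$ the intersection $\bigcap_{i\in A\sqcup A'}H^U_{i,k_i}$ is a nonempty connected subset of $U$, hence projects to a unique component $L(\bk)\in\pi_0(\bigcap_{i\in A\sqcup A'}H_i)$. Using connectedness of $\bigcap_{i\in A}H^U_{i,k_i}$ (guaranteed by unimodularity) together with the fact that $f$ separates $A$ and $A'$, the joint restriction condition becomes exactly $L(\bk)\subseteq W\cap W'$. Partitioning the sum over $\bk$ by $L=L(\bk)\in\pi_0(W\cap W')$ and applying Remark~\ref{rem:riparametrizzazione} once more identifies each block with $f^*(\oomega_{L,A\sqcup A'})$; injectivity of $f^*$ then gives the desired equality. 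The main technical obstacle will be precisely this identification of summation indices: it is where unimodularity (for connectedness of the intersections on $U$) and separation of both $A$ and $A'$ individually are jointly used, and it also explains why only the components actually contained in $W\cap W'$ contribute.
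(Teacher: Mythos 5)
Your proof follows essentially the same route as the paper's: pull back under a suitable covering, expand both factors via Remark~\ref{rem:riparametrizzazione}, multiply and re-sort to absorb the sign $(-1)^{\ell(A,A')}$, recombine the two multi-indices into one, and finally read off the result as the pullback of $\sum_L\oomega_{L,A\sqcup A'}$ using injectivity of $f^*$. Your extra care in identifying, for each combined multi-index $\bk$, the unique component $L(\bk)$ and in explaining why only $L\subseteq W\cap W'$ contribute is a welcome clarification that the paper compresses into a single reindexing step.

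There is, however, one problematic hypothesis. You begin by assuming that one may choose $f$ so that the \emph{entire} lifted arrangement $\A_U$ is unimodular, claiming this "can be arranged" by iterating Proposition~\ref{ex:sepind} or by passing to a fine enough lattice. The paper explicitly refutes this: the remark just before Theorem~\ref{thm:main_rational} exhibits a central arrangement in $(\C^*)^2$ (given by a $2\times4$ integer matrix) admitting \emph{no} covering that makes $\A_U$ unimodular, and iterating Proposition~\ref{ex:sepind} only controls independent sets, not circuits. What you actually need, and what the paper uses, is far weaker: a covering separating the single independent set $A\sqcup A'$. The covering from Proposition~\ref{ex:sepind} applied to $A\sqcup A'$ works; indeed, since $\{\chi_i/m(A\sqcup A')\}_{i\in A\sqcup A'}$ is part of a basis of the covering lattice $\Lambda(A\sqcup A')$, every intersection $\bigcap_{i\in A_1}H^U_{i,j_i}$ for $A_1\subseteq A\sqcup A'$ is a coset of a subtorus, hence connected, which gives both separation of $A$, $A'$, $A\sqcup A'$ and the connectedness you invoke when identifying $L(\bk)$. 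So your argument survives after replacing "unimodular $\A_U$" by "the separating covering for $A\sqcup A'$ of Proposition~\ref{ex:sepind}," but as written the existence claim is false.
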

\begin{proof}
	Consider a covering $f:U\to T$ that separates the independent set  $A\sqcup A'$ (e.g., the one described in \Cref{ex:sepind}). Then, by definition, in order to evaluate the product $\oomega_{W,A} \oomega_{W',A'}$ we consider its pullback $f^*(\oomega_{W,A}) f^*(\oomega_{W',A'})$ which, with \Cref{rem:riparametrizzazione}, equals
	\[
	\Bigg(
	\sum_{\substack{\uno \leq \bj \leq \bm\\ \cap_i H^U_{i, j_i} \subseteq f^{-1}(W)}} \prod_{i \in A} \oomega^U_{i,j_i} \Bigg)
	\Bigg( \sum_{\substack{\uno \leq \bj' \leq \bm'\\ \cap_{i'} H^U_{i', j'_{i'}} \subseteq f^{-1}(W')}} \prod_{i' \in A'} \oomega^U_{i',j'_{i'}}
	\Bigg)
	\]
	\[
	=
	\sum_{\substack{\uno \leq (\bj,\bj') \leq (\bm,\bm') \\ \cap_i H^U_{i, j_i}\cap_{i'} H^U_{i', j'_{i'}} 
			\subseteq f^{-1}(W'\cap W)}} 
	\prod_{i \in A} \oomega^U_{i,j_i}
	\prod_{i' \in A'} \oomega^U_{i,j'_{i'}}
	\]
	\[
	=
	\sum_{\substack{\uno \leq \overline{\bj} \leq (\bm,\bm') \\ \cap_{i\in A\sqcup A'} H^U_{i, \overline{j}_i} 
			\subseteq f^{-1}(W'\cap W)}} 
	(-1)^{\ell({A,A'})} 
	\prod_{i \in A \sqcup A'} \oomega^U_{i,\overline{j}_i},
	\]
	where $\overline{\bj}=(\bj,\bj')$. The latter equals, by definition
	\[
	f^*\Bigg((-1)^{\ell({A,A'})}\sum_{L\in \pi_0(W\cap W')} 
	\oomega_{L,A\sqcup A'}\Bigg)
	\]
	as was to be shown.
\end{proof}

\begin{remark}
	Assume that $\cap_{i \in A} H_i$ is connected and call it $W$. Then, since the identity separates $A$, we have $\oomega_{W,A} = \oomega_A$.
\end{remark}

\begin{definition} \label{df:oetaWAB}
Given $A \subset E$ independent and given $W$ a connected component of 
$\cap_{i \in A}H_i$, we write $\oeta_{W,A,B}$ for the form
\[
(-1)^{\ell({A,B})}\oomega_{W,A} \psi_B.
\]
\end{definition}
In the following if $W$ is not a connected component of $\cap_{i \in A}H_i$  the expression $\oeta_{W,A,B}$ will be considered as meaningless and it will be treated as zero.

\begin{remark} \label{rem:prodrel}
	We have the following immediate consequence of \Cref{lem:prodsum}. 
	If $A, A', B, B'\subseteq E$ are such that $A\sqcup A' \sqcup B \sqcup B'$ is an independent set and $W$, resp.\ $W'$ are a choice of a connected component of $\bigcap_{i\in A} H_i$, resp.\ $\bigcap_{i\in A'} H_i$, we can compute
	\[
	\oeta_{W,A,B} \oeta_{W',A',B'}=
	(-1)^{\ell({A\cup B,A'\cup B'})} \sum_{L\in \pi_0(W\cap W')}
	\oeta_{L,A\sqcup A', B\sqcup B'}.
	\]
\end{remark}

\begin{definition} \label{def:filtrazioneleray}
	We introduce the  
	increasing filtration $\FF$ of $H^*(M(\A);\Z)$ defined by
	\[ \FF_i H^*(M(\A);\Z)  := \sum_{j \leq i} H^j(M(\A);\Z) \cdot H^*(T;\Z). \]
\end{definition}
Such a filtration is the Leray filtration associated with the inclusion $M(\A) \hookrightarrow T.$
The same filtration, with rational coefficients, was introduced in \cite[Remark 4.3.(2)]{DeConciniProcesiToricArr}. 
The associated graded module is
\begin{equation}\label{eq:decompositiongraded}
	\gr_k (H^*(M(\A)))
	= \bigoplus_{
		\substack{W\in \mathcal L(\A)\\
			\operatorname{codim}(W) = k
	}}
	H^*(W) \otimes H^k(M(\A[W]))
\end{equation}
where $\A[W]$ is the hyperplane arrangement introduced in Definition \ref{def:localarrangement}.

\begin{lemma}\label{lem:graduato}
	Let $A,B\subseteq E$ such that $A\sqcup B$ is independent and let $W$ be a connected component of $\cap_{i\in A}H_i$. Then, the image of $\oeta_{W,A,B}$ in $\gr_{\vert A \vert}(H^*(M(\A)))$ equals
	\[
	(-1)^{\ell({B,A})} 2^{\vert A \vert} \psi_B\otimes e_A
	\in H^{\vert B \vert } (W) \otimes H^{\vert A\vert}(M(\A[W])),\]
	where $e_A$ denotes the canonical generator in the top-degree of the Orlik-Solomon algebra of the hyperplane arrangement $\A[W]$ associated with the hyperplanes indexed by $A$ (\cf \Cref{def:localarrangement}).
\end{lemma}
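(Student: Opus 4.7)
The proof rests on the identity $\oomega_i = 2\omega_i - \psi_i$ of Equation \eqref{df:bar} together with the observation that each $\psi_j$ is the pullback of a translation-invariant form on $T$ and so lies in $\FF_0 H^*(M(\A))$. Since the Leray filtration $\FF$ is multiplicative with respect to cup product (a class of log-pole order $a$ cupped with a class in $H^*(T)$ has log-pole order $a$), any term arising in an expansion that replaces an $\omega$-factor by a $\psi$-factor drops at least one level in $\FF$.

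I would first choose a covering $f:U \to T$ that separates $A$, as provided by Proposition \ref{ex:sepind}. By Remark \ref{rem:riparametrizzazione},
\[
f^*(\oomega_{W,A}) = \sum_{\substack{\uno \leq \bj \leq \bm \\ \cap_i H^U_{i,j_i} \subseteq f^{-1}(W)}} \prod_{i \in A} \oomega^U_{i,j_i}.
\]
Expanding each factor as $\oomega^U_{i,j_i} = 2\omega^U_{i,j_i} - \psi^U_i$ and isolating the unique term that retains all $|A|$ log-pole factors, we obtain $f^*(\oomega_{W,A}) \equiv 2^{|A|} f^*(\omega_{W,A})$ modulo the pullback of $\FF_{|A|-1}$. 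Since $f^*$ is injective and respects the filtration, this yields $\oomega_{W,A} \equiv 2^{|A|}\omega_{W,A}$ in $\gr_{|A|}$. Multiplying by $\psi_B \in \FF_0$ then gives
\[
\oeta_{W,A,B} \equiv (-1)^{\ell(A,B)}\, 2^{|A|}\, \omega_{W,A}\, \psi_B \pmod{\FF_{|A|-1}}.
\]

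To match the decomposition \eqref{eq:decompositiongraded}, in which $H^*(W)$ appears as the first tensor factor, I would reorder the wedge product via $\omega_{W,A}\psi_B = (-1)^{|A||B|}\psi_B \omega_{W,A}$. Since $A$ and $B$ are disjoint, every pair $(a,b)\in A\times B$ is an inversion of exactly one of the orderings $(A,B)$ and $(B,A)$, hence $\ell(A,B)+\ell(B,A)=|A||B|$; combining this Koszul sign with the sign from Definition \ref{df:oetaWAB} produces the total sign $(-1)^{\ell(B,A)}$. The resulting class is supported on the summand indexed by $W$, because the summation in Definition \ref{def:oomegaf} is restricted to tuples $\bj$ with $\cap_i H^U_{i,j_i} \subseteq f^{-1}(W)$, so the top-order residue of $\omega_{W,A}$ vanishes at any codimension-$|A|$ layer other than $W$.

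The remaining point, which I expect to be the main obstacle, is identifying the image of $\omega_{W,A}$ in $H^{|A|}(M(\A[W]))$ with the canonical Orlik-Solomon top generator $e_A$. Choosing $f$ so that $\A_U$ is additionally unimodular (which is achievable by further refining the covering of Proposition \ref{ex:sepind}), locally near each connected component of $f^{-1}(W)$ the summation defining $f^*(\omega_{W,A})$ collapses to a single product $\prod_{i \in A} \omega^U_{i,j_i}$, corresponding to the unique tuple of lifted components containing that point. Via the linearization at a generic point of $W$, this form coincides with the standard top-degree logarithmic form of the local hyperplane arrangement $\A[W]$, which by the classical Orlik-Solomon theorem represents $e_A$. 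The averaging factor in Definition \ref{def:oomegaf} ensures that this local identification descends to give precisely $e_A$ (with unit coefficient) on $M(\A)$, completing the proof.
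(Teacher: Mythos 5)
Your argument is correct and follows essentially the same route as the paper's proof: choose a separating cover, observe that $\oomega^U_{i,j_i} = 2\omega^U_{i,j_i} - \psi^U_i$ so that replacing any $\omega$-factor by a $\psi$-factor drops the Leray filtration level, deduce $\operatorname{gr}_{|A|}(\oomega_{W,A})=\operatorname{gr}_{|A|}(2^{|A|}\omega_{W,A})$, and identify the residue with the OS top generator $e_A$ via the local linearization. The only cosmetic differences are that the paper pushes forward via $f_*$ while you pull back via $f^*$ and use injectivity, and the paper reduces to $B=\emptyset$ by multiplicativity of $\operatorname{gr}$ where you carry the $\psi_B$ factor explicitly and verify the sign $\ell(A,B)+\ell(B,A)=|A||B|$ directly; both handle the Koszul sign correctly.
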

\begin{proof}
	We consider the corresponding graduation $\operatorname{gr}^U$ for the lift to a unimodular covering $f: U\to T$ (e.g., the one separating $A$ in \Cref{ex:sepind}). 
	
	By multiplicativity of $\operatorname{gr}_k$, it suffices to prove the case $B=\emptyset$. We thus have to consider $\oomega_{W,A}$ which, by \Cref{rem:pf}, can be written as $\oomega_{W,A}=f_* \oomega_A^U(q)$, where  $q$ is a fixed point in $f^{-1}(W)$. Now,
	\[\operatorname{gr}^U_k \left( 
	\oomega_A^U(q)\right) = 
	\operatorname{gr}^U_k \left(
	2^{\vert A\vert}\omega_A^U(q)\right),
	\]
	hence 
	\[
	\operatorname{gr}_k(\oomega_{W,A}) = f_* (\operatorname{gr}_k \oomega_A^U(q))
	=\operatorname{gr}_k (2^{\vert A\vert}\omega_{W,A}),
	\]
	and, since $\operatorname{exp}_p^*(\omega_i)=e_i$, the class $[2^{\vert A\vert}\omega_{W,A}]$ maps to the element $2^{\vert A\vert}\otimes e_A$ in $H^{0} (W) \otimes H^{\vert A\vert}(M(\A[W]))$ as desired.
\end{proof}

\section{Unimodular coverings of toric arrangements and rational cohomology}\label{s:rational_cohomology}

Let $\A = \{ H_0, \ldots, H_n\}$ be a primitive, central and essential arrangement in the torus $T$. Suppose further that the associated matroid has exactly one circuit $C\subseteq E$, and hence $\operatorname{rk} E = n$. Let $\chi_0, \ldots, \chi_n$ be the associated list of characters, and suppose that $\chi_0,\ldots,\chi_k$ are the characters associated to the circuit $C$.  

Recall that we identify the ground set of the matroid with indices of the characters, so that $C=\{0,\ldots,k\}$ and $E=\{0,\ldots,n\}$. We let also $F:=E\setminus C$, the letter $F$ (for ``free'') signifying that the elements in $F$ are coloops of the matroid.

Recall from \S \ref{sec:AM} that $\Lambda_E \subset \Lambda$ is the sublattice generated by the characters of $E$ and $\Lambda^E$ is the intersection $(\mathbb Q\otimes_{\mathbb Z} \Lambda_E) \cap \Lambda$.

\begin{definition}
	For every $i=0,\ldots,k$ set 
	\[
	a_i:= m(E) \prod_{j\in C, j \neq i}m(C \setminus \{j\}) .
	\] 
	For every $i=k+1,\ldots,n$ set $a_i = m(E)$.
	We call $\XE$ the lattice in $\Q \otimes_\Z \Lambda$ generated by the elements $\frac{\chi_i}{a_i}$, $i=0,\ldots,n$. 
\end{definition}

\begin{remark}\label{rem:ma} 
	
	Since 
	 $\A$ is essential we have that $\Lambda=\Lambda^E$, hence $m(E)$ is precisely the index of $\Lambda_E$ in $\Lambda$. 
\end{remark}

\begin{lemma}\label{lem:unimodularity}
	In $\XE$ we have the relation
	\begin{equation}\label{eq:circuito_2}
		\sum_{i = 0}^k {c}_i  \frac{\chi_i}{a_i} = 0
	\end{equation}
	where $c_i\in \{+1,-1\}$ for all $i$.
\end{lemma}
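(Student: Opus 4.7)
My plan is to start from the circuit dependency relation provided by Lemma~\ref{lem:arcirc}, which asserts that
\[
\sum_{i\in C} c_i\, m(C\setminus\{i\})\, \chi_i = 0
\]
for some signs $c_i\in\{+1,-1\}$, and to rewrite this inside $\Q\otimes_\Z\Lambda$ as a relation among the rescaled characters $\chi_i/a_i$ that generate $\XE$. The key bookkeeping is that the definition of $a_i$ for $i\in C$ is precisely
\[
a_i = m(E)\prod_{\substack{j\in C\\ j\neq i}} m(C\setminus\{j\}),
\]
so that $m(C\setminus\{i\})\,\chi_i$ differs from $\frac{\chi_i}{a_i}$ by the scalar $N:=m(E)\prod_{j\in C}m(C\setminus\{j\})$, which is independent of $i\in C$. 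Explicitly,
\[
m(C\setminus\{i\})\,\chi_i = N\cdot\frac{\chi_i}{a_i}\qquad\text{for every }i\in C.
\]

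Substituting into Lemma~\ref{lem:arcirc} gives
\[
N\cdot\sum_{i=0}^{k} c_i\,\frac{\chi_i}{a_i} = 0
\]
inside $\Lambda$, and hence inside $\Q\otimes_\Z\Lambda$. Since $N$ is a nonzero integer and $\Q\otimes_\Z\Lambda$ is torsion-free, we may divide by $N$ to obtain
\[
\sum_{i=0}^{k} c_i\,\frac{\chi_i}{a_i} = 0
\]
as an identity in $\Q\otimes_\Z\Lambda$. Because each summand $c_i\chi_i/a_i$ belongs to $\XE$ by definition, this is the desired relation in $\XE$, with the same signs $c_i\in\{+1,-1\}$ produced by Lemma~\ref{lem:arcirc}.

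There is no real obstacle here: the only thing to check is the arithmetic identity $m(C\setminus\{i\})\,a_i = N$ for $i\in C$, which is immediate from the definition of $a_i$, and the fact that division by $N$ is legitimate in the ambient $\Q$-vector space. The signs $c_i$ are inherited verbatim from Lemma~\ref{lem:arcirc}, so no additional sign analysis is required.
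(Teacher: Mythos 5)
Your proof is correct and follows essentially the same route as the paper: invoke Lemma~\ref{lem:arcirc}, observe that $a_i\,m(C\setminus\{i\})$ is a constant independent of $i\in C$, and divide by that constant inside $\Q\otimes_\Z\Lambda$. (Incidentally, your value $N=m(E)\prod_{j\in C}m(C\setminus\{j\})$ is the correct one; the exponent $k+1$ on $m(E)$ in the paper's proof appears to be a typo, though it does not affect the argument since only independence from $i$ is used.)
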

\begin{proof} This follows from Lemma~\ref{lem:arcirc}, since
	the product 
	\[a_i m(C \setminus \{i\}) = m(E)^{k+1} \prod_{j=0}^k m(C \setminus \{j\})\]
	 does not depend on the index $i$.
\end{proof}

\begin{lemma}
	The lattice $\XE$ contains $\Lambda$.
\end{lemma}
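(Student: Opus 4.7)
The strategy is to exploit essentiality to reduce the claim to showing that $\frac{1}{m(E)}\Lambda_E$ is contained in $\Lambda(E)$, and then verify this inclusion by using the integer factorizations appearing in the definition of the $a_i$'s.

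First I would check that $\chi_i/m(E) \in \Lambda(E)$ for every $i \in E$. For $i \in F$ this is immediate, since $a_i = m(E)$ so $\chi_i/m(E) = \chi_i/a_i$ is one of the defining generators of $\Lambda(E)$. For $i \in C$, the definition gives
\[
\frac{\chi_i}{a_i} = \frac{\chi_i}{m(E) \prod_{j \in C,\, j \neq i} m(C \setminus \{j\})},
\]
and multiplying the generator $\chi_i/a_i \in \Lambda(E)$ by the integer $\prod_{j \in C,\, j \neq i} m(C \setminus \{j\})$ yields $\chi_i/m(E) \in \Lambda(E)$. Consequently $\frac{1}{m(E)} \Lambda_E \subseteq \Lambda(E)$.

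Next I would invoke essentiality. By Remark~\ref{rem:ma}, the assumption that $\A$ is essential gives $\Lambda = \Lambda^E$, so $\Lambda/\Lambda_E$ is a finite abelian group of order $m(E)$. Its exponent therefore divides $m(E)$, which means $m(E) \cdot \lambda \in \Lambda_E$ for every $\lambda \in \Lambda$. Equivalently, $\Lambda \subseteq \frac{1}{m(E)} \Lambda_E$, and combining with the previous step we conclude $\Lambda \subseteq \Lambda(E)$.

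There is no real obstacle here: the argument is essentially bookkeeping about which fractional denominators are achievable inside $\Lambda(E)$, combined with the standard fact that $\Lambda/\Lambda_E$ is killed by its order. The only point worth keeping in mind is that the circuit relation of Lemma~\ref{lem:unimodularity} is not needed for this inclusion; it is the presence of the coloop factor $m(E)$ in every $a_i$, rather than any relation among the generators, that makes the argument work.
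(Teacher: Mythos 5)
Your proof is correct and takes essentially the same route as the paper: both decompose the claim into the two inclusions $\Lambda \subseteq \frac{1}{m(E)}\Lambda_E \subseteq \Lambda(E)$, with the first following from $\vert\Lambda/\Lambda_E\vert = m(E)$ (via Remark~\ref{rem:ma}) and the second from the observation that $m(E)$ divides every $a_i$. The only cosmetic difference is that you verify the second inclusion generator-by-generator while the paper checks it directly on an arbitrary element of $\frac{1}{m(E)}\Lambda_E$; the underlying ideas are identical.
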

\begin{proof} We split the claim into two inclusions: 
	\[\Lambda \stackrel{\textrm{(i)}}{\subseteq} \frac{1}{m(E)} \Lambda_E \stackrel{\textrm{(ii)}}{\subseteq} \XE .\]
	
	Inclusion (i) follows from the fact that, by Remark~\ref{rem:ma} the quotient $\Lambda/\Lambda_E$ is a group of cardinality $m(E)$, hence $m(E) \Lambda \subset 
	\Lambda_E$. 
	
	For inclusion (ii), notice that every element of $\frac{1}{m(E)}\Lambda_E$ can be written as a combination
	\[
	\frac{1}{m(E)}\sum_{i=0}^n n_i\chi_i = \sum_{i=0}^n \left(\frac{n_i a_i}{m(E)}\right) 
	\frac{\chi_i}{a_i}
	\]
	for some $n_i\in \mathbb Z$.
	Now, 
	$a_i$ is an integer divisible by $m(E)$ for all $i$. 
	 Hence all parenthesized coefficients on the r.h.s.\ are integers, which means $\frac{1}{m(E)}\Lambda_E \subseteq \XE$, as claimed.
	
\end{proof}

\begin{lemma}\label{lem:degree}
	The inclusion of lattices $\Lambda \subseteq \XE$ induces a covering of $T$ of degree 
	\[
	d=\frac{\prod_{j\in E\setminus \{i\}} a_j}{m(E\setminus \{i\})} 
	\]
	where $i$ is any element of $C$.
\end{lemma}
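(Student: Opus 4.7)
The plan is to establish a tower of sublattices $\Lambda_{E \setminus \{i\}} \subseteq \Lambda \subseteq \Lambda(E)$ and apply multiplicativity of indices. Here $i$ is any element of the circuit $C$. Note that the degree of the covering induced by $\Lambda \subseteq \Lambda(E)$ equals the index $[\Lambda(E):\Lambda]$, so our task is to compute this index.

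First I would show that the elements $\{\tfrac{\chi_j}{a_j}\}_{j \in E \setminus \{i\}}$ form a $\mathbb{Z}$-basis of $\Lambda(E)$. They clearly lie in $\Lambda(E)$ and are $\mathbb{Q}$-linearly independent, since $E \setminus \{i\}$ is independent in the matroid (removing one element of the unique circuit $C$ yields a basis of the matroid). To see that they span $\Lambda(E)$ over $\mathbb{Z}$, it suffices to express $\tfrac{\chi_i}{a_i}$ as an integer combination: this follows at once from \Cref{lem:unimodularity}, which gives $c_i \tfrac{\chi_i}{a_i} = -\sum_{j \in C \setminus \{i\}} c_j \tfrac{\chi_j}{a_j}$ with $c_j \in \{\pm 1\}$.

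Next I would compute $[\Lambda(E) : \Lambda_{E \setminus \{i\}}]$. The lattice $\Lambda_{E \setminus \{i\}}$ has $\{\chi_j\}_{j \in E \setminus \{i\}}$ as basis, while $\Lambda(E)$ has $\{\tfrac{\chi_j}{a_j}\}_{j \in E \setminus \{i\}}$ as basis by the previous step. The change-of-basis matrix is diagonal with entries $a_j$ (for $j \in E \setminus \{i\}$), so
\[
[\Lambda(E) : \Lambda_{E \setminus \{i\}}] = \prod_{j \in E \setminus \{i\}} a_j.
\]
On the other hand, since $E \setminus \{i\}$ is a basis of the matroid and $\A$ is essential, the characters $\{\chi_j\}_{j \in E \setminus \{i\}}$ span $\mathbb{Q} \otimes \Lambda$, whence $\Lambda^{E \setminus \{i\}} = (\mathbb{Q} \otimes \Lambda_{E \setminus \{i\}}) \cap \Lambda = \Lambda$. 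By the definition of the multiplicity function this gives $[\Lambda : \Lambda_{E \setminus \{i\}}] = m(E \setminus \{i\})$.

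Finally, the multiplicativity of indices in the tower $\Lambda_{E \setminus \{i\}} \subseteq \Lambda \subseteq \Lambda(E)$ yields
\[
\prod_{j \in E \setminus \{i\}} a_j = [\Lambda(E) : \Lambda] \cdot m(E \setminus \{i\}),
\]
from which the claimed formula follows. Since the left-hand side $[\Lambda(E):\Lambda]$ is intrinsic, the independence of the expression on the choice of $i \in C$ is automatic. The only mildly delicate point is verifying the basis claim in the first step; once that is in hand, the argument is a routine tower computation.
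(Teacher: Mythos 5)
Your proof is correct and takes essentially the same route as the paper: both set up the tower $\Lambda_{E \setminus \{i\}} \subseteq \Lambda \subseteq \XE$, use \Cref{lem:unimodularity} to see that $\{\chi_j/a_j\}_{j \neq i}$ is a $\Z$-basis of $\XE$ so that $[\XE : \Lambda_{E\setminus\{i\}}] = \prod_{j\neq i} a_j$, note that $[\Lambda : \Lambda_{E\setminus\{i\}}] = m(E\setminus\{i\})$ by essentiality, and conclude by multiplicativity of indices. You spell out the basis claim a bit more explicitly than the paper does, but the argument is the same.
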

\begin{proof} It is enough to prove that $d$ as defined above equals the index of $\Lambda$ in $\XE$.
	Let us fix an index $i\in C$ and consider the inclusions
	\[
	\Lambda_{E\setminus \{i\}} \subseteq \Lambda \subseteq \XE.
	\] 
	Since (by Lemma~\ref{lem:unimodularity}) the lattice $\XE$ is generated by the basis $\{ \frac{\chi_j}{a_j} \mid j \neq i\}$, the index of $\Lambda_{E \setminus \{i\}}$ in $\XE$ is
	\[
	[\XE:\Lambda_{E \setminus \{i\}}] = \prod_{j \neq i} a_j.
	\]
	On the other hand, $m(E \setminus \{i\})$ is by definition the index of $\Lambda_{E \setminus \{i\}}$ in $\Lambda = \Lambda^{E \setminus \{i\}}$. 
	In conclusion, the desired index is
	\begin{align*}
		[\XE:\Lambda] &=\frac{[\XE:\Lambda_{E \setminus \{i\}}]}{[\Lambda:\Lambda_{E \setminus \{i\}}]}
		=
		\frac{\prod_{j \neq i} a_j}{m(E \setminus \{i\})} 
	\end{align*}
and the claim follows.
\end{proof}

\begin{definition} Let 
	\[\pi_U: U\to T\]
	denote the covering induced by the inclusion $\Lambda \subseteq \XE$.
	
	We denote by $\A_U$ the central arrangement in the torus $U$ induced by the characters $\frac{\chi_i}{a_i}$ in $\XE$. Notice that $\A_U$ is clearly primitive, since the $\frac{\chi_i}{a_i}$ contain a basis of $\XE$.
\end{definition}

\begin{lemma}
	The arrangement $\A_U$ is unimodular.
\end{lemma}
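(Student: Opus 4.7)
My plan is to verify unimodularity by showing that the multiplicity function $m_U$ of $\A_U$ is identically $1$. Since $\A_U$ is central, every intersection is non-empty, so by \Cref{rem:multiplicity}(a) it suffices to check $m_U(A)=1$ for every $A\subseteq E$.

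The key observation I would establish first is the following:
\begin{quote}
\emph{Claim.} For any $i_0\in C$, the set $\{\chi_i/a_i\}_{i\in E\setminus\{i_0\}}$ is a $\Z$-basis of $\XE$.
\end{quote}
By definition $\XE$ is generated by $\{\chi_i/a_i\}_{i\in E}$, and Lemma~\ref{lem:unimodularity} provides the relation $\sum_{i\in C}c_i\,\chi_i/a_i=0$ with $c_i\in\{\pm 1\}$. Since $c_{i_0}$ is a unit, this relation expresses $\chi_{i_0}/a_{i_0}$ as an integer combination of the remaining generators, so $E\setminus\{i_0\}$ still $\Z$-generates $\XE$. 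On the other hand, because $C$ is the unique circuit of the underlying matroid and $i_0\in C$, the set $\{\chi_i\}_{i\in E\setminus\{i_0\}}$ is $\Q$-linearly independent, hence so is $\{\chi_i/a_i\}_{i\in E\setminus\{i_0\}}$. Since $\A$ is essential, $\Lambda$ has rank $n$, and by Lemma~\ref{lem:degree} so does $\XE$; thus we have $n$ $\Q$-linearly independent generators of a rank-$n$ lattice, forcing a $\Z$-basis.

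With the Claim in hand, I split into two cases on $A\subseteq E$. \textbf{Case 1:} $C\not\subseteq A$. Choose $i_0\in C\setminus A$; then $A\subseteq E\setminus\{i_0\}$ and $\{\chi_i/a_i\}_{i\in A}$ is a subset of a $\Z$-basis of $\XE$, hence generates a saturated sublattice. So $m_U(A)=1$. \textbf{Case 2:} $C\subseteq A$. Fix any $i_0\in C$; using $c_{i_0}^2=1$, the relation gives
\[
\frac{\chi_{i_0}}{a_{i_0}}=-c_{i_0}\sum_{j\in C\setminus\{i_0\}}c_j\,\frac{\chi_j}{a_j}\in\Big\langle\frac{\chi_j}{a_j}\Big\rangle_{j\in A\setminus\{i_0\}},
\]
so $\langle\chi_i/a_i\rangle_{i\in A}=\langle\chi_i/a_i\rangle_{i\in A\setminus\{i_0\}}$. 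Since $A\setminus\{i_0\}\subseteq E\setminus\{i_0\}$, the latter is part of the $\Z$-basis from the Claim, hence saturated, and again $m_U(A)=1$.

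This is essentially a direct verification rather than a deep argument; the one subtle point where the hypothesis is crucial is the transition from a $\Q$-relation among primitive rescalings to a $\Z$-relation, which works only because the coefficients $c_i$ in Lemma~\ref{lem:unimodularity} are units. (If they were merely nonzero integers, removing one generator would yield a finite-index sublattice rather than a basis, and unimodularity could fail.) I do not anticipate any further obstacle.
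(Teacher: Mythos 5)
Your proof is correct and takes essentially the same approach as the paper: you establish that for any $i_0\in C$ the rescaled characters $\{\chi_i/a_i\}_{i\in E\setminus\{i_0\}}$ form a $\Z$-basis of $\XE$ (using the unit-coefficient relation from \Cref{lem:unimodularity} and the fact that $E\setminus\{i_0\}$ is independent since $C$ is the unique circuit), and then observe that every independent subset of $\{\chi_i/a_i\}_{i\in E}$ extends to such a basis by choosing $j\in C\setminus A$. Your Case~2 (for $C\subseteq A$) is handled implicitly in the paper, since for a dependent set $A$ with maximal independent subset $A'$ one has $\Lambda^A=\Lambda^{A'}$ and $\Lambda_{A'}\subseteq\Lambda_A$, so $m_U(A)$ divides $m_U(A')=1$; making that case explicit as you do is harmless and slightly more thorough.
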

\begin{proof} For every $j \in C$ the set
$
\{\frac{\chi_i}{a_i}\}_{i \neq j} $ is a basis of the lattice $\Lambda(E)$.
In fact, $E \setminus \{j\}$ is independent and 
by  \eqref{eq:circuito_2}  we have that 
$\frac{\chi_j}{a_j}$ belongs to the lattice generated by the characters $\frac{\chi_i}{a_i}$, $i\in C\setminus \{j\}$.

Hence for every independent subset $A \subseteq E$  
 we can choose $j \in C \setminus A$. Then 
the set $\{\frac{\chi_i}{a_i}\}_{i \in A}$ can be completed to a basis 
$\{\frac{\chi_i}{a_i}\}_{i \in E\setminus \{j\}}$ of 
$\Lambda(E)$. 
\end{proof}

Notice that the number of connected component of $\pi_U^{-1}(H_i)$ is $a_i$. In fact for $j \neq i$ the character ${\chi_i}$ can be written in the basis $\{\frac{\chi_k}{a_k}\}_{k \neq j}$  as 
$
\chi_i = a_i \frac{\chi_i}{a_i}
$.

\begin{lemma}\label{lem:WAE} Let $A\subsetneq E$ be independent,  let $W$ be  a connected component of $\bigcap_{i\in A} H_i$ and choose $p\in W$. Then, for every layer $L$ of $\A_U$ such that $\pi_U(L)=W$, the number of preimages of $p$ contained in $L$ is 
	\[|L \cap \pi_U^{-1}(p)| = 
	\frac{m(A)}{m(E\setminus\{j\})} \prod_{i\in E\setminus (A\cup \{j\})} a_i 
	\]
	where $j$ is any element of $C\setminus A$.
\end{lemma}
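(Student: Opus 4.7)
My plan is to reduce the statement to a counting problem on the Galois cover $\pi_U$ and then evaluate the resulting quantity using unimodularity of $\A_U$ together with Lemma~\ref{lem:degree}. Set $S := \bigcap_{i \in A} H_i$, so that $W$ is one of the $m(A)$ connected components of $S$. Since $\pi_U$ is a Galois covering with group $G = \XE/\Lambda$ acting transitively on each fibre, a standard lifting argument shows that the induced $G$-action on the connected components of $\pi_U^{-1}(W)$ is transitive (two components both meet $\pi_U^{-1}(p)$, and $G$ is transitive on that fibre). Hence all components of $\pi_U^{-1}(W)$ cover $W$ with the same degree, yielding
\[
|L \cap \pi_U^{-1}(p)| = \frac{\deg \pi_U}{N},
\]
where $N$ denotes the number of connected components of $\pi_U^{-1}(W)$. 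The whole task thus reduces to computing $N$.

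Next, I would count the components of the larger set $\pi_U^{-1}(S)$ and then distribute them among the $m(A)$ components of $S$. Since $\A_U$ is unimodular, every non-empty intersection of chosen connected components of the $\pi_U^{-1}(H_i)$, $i \in A$, is itself connected; the components of $\pi_U^{-1}(S)$ are therefore exactly the non-empty such intersections. The hypothesis $j \in C \setminus A$ ensures that $\{\chi_i/a_i\}_{i \in E \setminus \{j\}}$ is a $\Z$-basis of $\XE$, so in particular its subset $\{\chi_i/a_i\}_{i \in A}$ extends to a basis. Viewing these elements as characters of $U$ yields a surjective homomorphism $\phi \colon U \to (\C^*)^{|A|}$ with connected kernel, and the components of $\pi_U^{-1}(H_i)$ are precisely the fibres $\{\chi_i/a_i = \zeta\}$ for $\zeta \in \mu_{a_i}$. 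This shows that all $\prod_{i \in A} a_i$ candidate intersections are non-empty, so $\pi_U^{-1}(S)$ has exactly $\prod_{i \in A} a_i$ connected components. The $m(A)$ components of $S$ are mutually translates by elements of $S$, and any such translation can be lifted to $U$ by choosing a preimage point, giving a bijection between preimage-components. Consequently each component of $S$ receives the same number $N = \prod_{i \in A} a_i / m(A)$ of preimage components.

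Finally, substituting $\deg \pi_U$ from Lemma~\ref{lem:degree} applied with its free index equal to $j \in C$, I obtain
\[
|L \cap \pi_U^{-1}(p)| = \frac{m(A)}{\prod_{i \in A} a_i} \cdot \frac{\prod_{i \in E \setminus \{j\}} a_i}{m(E \setminus \{j\})} = \frac{m(A)}{m(E \setminus \{j\})} \prod_{i \in E \setminus (A \cup \{j\})} a_i,
\]
where I used $A \subseteq E \setminus \{j\}$ to simplify the quotient of products. I expect the most delicate step to be the unimodularity-plus-basis argument showing that no candidate intersection of lifted hypertori over $S$ is empty; the transitivity of the $G$-action on components and the homogeneity distribution across components of $S$ are then routine group-theoretic checks, and the conclusion is pure arithmetic.
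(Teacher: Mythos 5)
Your proof is correct and follows essentially the same route as the paper: compute $\deg\pi_U$ via Lemma~\ref{lem:degree}, show that $\pi_U^{-1}(W)$ has $\prod_{i\in A} a_i / m(A)$ connected components, and divide. The paper states this component count without justification; your derivation --- Galois transitivity for the equal-fibre claim, the extend-to-a-basis character map $\phi$ to establish non-emptiness and connectedness of every candidate intersection, and translation by a lifted point to equidistribute the count over the $m(A)$ components of $\bigcap_{i\in A}H_i$ --- correctly supplies the details the paper leaves implicit.
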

\begin{proof}
	The cardinality of the preimage of $p$ is equal to the degree of the covering, computed in Lemma~\ref{lem:degree}. 
	On the other hand, 
	given $W$ a connected component of $\cap_{i\in A} H_i$, the number of connected components of $\pi_U^{-1}(W)$ is equal to $\frac{\prod_{i \in A}a_i}{m(A)}$. Hence, for any $j\in C\setminus A$, 
	\begin{align*}
		|L \cap \pi_U^{-1}(p)| & = \frac{m(A) \prod_{i\in E\setminus \{j\}} a_i}{m(E\setminus \{j\})\prod_{i \in A}a_i}\\
		&  = 
		\frac{m(A)}{m(E\setminus\{j\})} \prod_{i\in E\setminus (A\cup \{j\})} a_i . \qedhere
	\end{align*}
\end{proof}
\begin{example}
	In the case of the arrangement of Example \ref{ex:cover} 
	with matrix 
	\[
	\left(
	\begin{array}{ccc}
	3 & 0 & 1 \\
	1 & 1 & 0 
	\end{array}
	\right)
	\]
	we have $E=C$ and the lattice $\Lambda = \Z^2$ coincides with the lattice $\Lambda^C$. In this case we have $a_0 = 3, a_1 = 3, a_2 = 1$, hence the lattice $\Lambda(C)$ is generated by $\langle  e_1 +\frac{e_2}{3},  \frac{e_2}{3} , e_1 \rangle.$ In particular the inclusion 
	$
	\Lambda \subset \Lambda(C)
	$
	corresponds to the covering 
	$f \colon U \to T$
	of Example \ref{ex:cover} (see Figure \ref{fig:covering}). Notice that, with respect to the basis $\{ e_1, \frac{e_2}{3}\}$ of $\Lambda(C)$, the arrangement $\exA_U$ is described by the matrix 
	\[
	\left(
	\begin{array}{ccc}
	3 & 0 & 1\\
	3 & 3 & 0
	\end{array}
	\right).
	\]
\end{example}

\begin{lemma} \label{lem:pull_back_oeta}
	For any $A,B\subseteq E$ such that $A\sqcup B$ is a maximal independent subset of $E$, for any connected component $W$ of $\bigcap_{i\in A}H_i$ and $p\in W$  we have
	
	\begin{equation}\label{eq:oeta_new}
		\pi_U^*(\oeta_{W,A,B}) =   (-1)^{\ell({A,B})} \frac{m(A \cup B)}{m(A)}  \sum_{q \in \pi_U^{-1}(p)}  \oomega^U_A(q) \psi^U_B.
	\end{equation}
	
\end{lemma}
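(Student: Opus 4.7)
The plan is to reduce the claim to computing the pull-back of each factor of $\oeta_{W,A,B} = (-1)^{\ell(A,B)} \oomega_{W,A} \psi_B$ separately under $\pi_U$, using that the covering $\pi_U \colon U \to T$ is unimodular (so in particular it separates every independent subset of $E$, including $A$).

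First, since $A \sqcup B$ is a maximal independent subset of $E$ and the matroid has rank $n$, there is a unique element $j \in E \setminus (A \cup B)$. Because $A\cup B$ already has maximal rank, $\{j\}$ is dependent on $A \cup B$, so $j$ belongs to the unique circuit $C$. In particular $j \in C \setminus A$, so Lemma~\ref{lem:WAE} applies with this choice of $j$; noting that $E \setminus \{j\} = A \cup B$ and $E \setminus (A \cup \{j\}) = B$, it yields
\[
|L \cap \pi_U^{-1}(p)| = \frac{m(A)}{m(A\cup B)} \prod_{i \in B} a_i
\]
for any connected component $L$ of $\pi_U^{-1}(W)$. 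Since $\A_U$ is unimodular, Remark~\ref{rem:fstaromega} then gives
\[
\pi_U^*(\oomega_{W,A}) = \frac{m(A\cup B)}{m(A) \prod_{i \in B} a_i} \sum_{q \in \pi_U^{-1}(p)} \oomega_A^U(q).
\]

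Next, the definition of $\psi_i^U$ in \eqref{df:psiU} gives $\pi_U^*(\psi_i) = a_i \psi_i^U$ for every $i$, so $\pi_U^*(\psi_B) = \bigl(\prod_{i \in B} a_i\bigr) \psi_B^U$. Combining these two computations, pulling back $\oeta_{W,A,B}$ produces the factor $\prod_{i \in B} a_i$ twice, once in the numerator from $\pi_U^*(\psi_B)$ and once in the denominator from $\pi_U^*(\oomega_{W,A})$, so these cancel. What remains is exactly
\[
\pi_U^*(\oeta_{W,A,B}) = (-1)^{\ell(A,B)} \frac{m(A\cup B)}{m(A)} \sum_{q \in \pi_U^{-1}(p)} \oomega_A^U(q) \psi_B^U,
\]
which is the claimed identity.

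The only nontrivial step is the identification of the cardinality $|L \cap \pi_U^{-1}(p)|$ in the form $\frac{m(A)}{m(A\cup B)} \prod_{i \in B} a_i$; once the observation that the unique element of $E \setminus (A\cup B)$ lies in $C$ is made, everything else is a direct application of the definitions together with Lemma~\ref{lem:WAE}. No further obstacle is expected.
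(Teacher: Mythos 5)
Your proof is correct and follows essentially the same route as the paper's: you pull back $\psi_B$ using $\pi_U^*(\psi_i) = a_i\psi_i^U$, pull back $\oomega_{W,A}$ via Remark~\ref{rem:fstaromega}, and then identify the combined constant using Lemma~\ref{lem:WAE} applied with $j$ the unique element of $E\setminus(A\sqcup B)$. The only point the paper leaves implicit that you spell out explicitly is that this $j$ lies in $C\setminus A$ (which is needed for Lemma~\ref{lem:WAE} to apply); your justification — $j$ cannot be a coloop since it is omitted from the basis $A\sqcup B$, and hence lies in the unique circuit — is sound.
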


\begin{proof}
	With \Cref{df:psiU} we have
	\[
	\pi_U^*(\psi_B) = \left( \prod_{i \in B}  a_i \right) \psi_B^U.
	\]
	and hence, applying this equality and Remark~\ref{rem:fstaromega} to Definition~\ref{df:oetaWAB}, we get
	\begin{equation}\label{eq:oeta}
		\pi_U^*(\oeta_{W,A,B}) = (-1)^{\ell({A,B})} \frac{\prod_{i \in B}  a_i}{|L \cap \pi_U^{-1}(p)| }  \sum_{q \in \pi_U^{-1}(p)}  \oomega^U_A(q) \psi^U_B .
	\end{equation}
	
	The coefficient in formula \eqref{eq:oeta} can be rewritten using \Cref{lem:WAE} applied to the independent subset $A\sqcup B$ of $E$ (noticing that there is a unique $j\in E\setminus (A\sqcup B)$, so that in fact $E\setminus (A\cup\{j\})=B$ and $E\setminus \{j\}=A\sqcup B$). We obtain
	\begin{align*}
		\frac{\prod_{i \in B} a_i}{|L \cap \pi_U^{-1}(p)| } & =  \frac{m(A\cup B)}{m(A)} \\
	\end{align*}
	and the claim follows.
\end{proof}
\begin{definition}
	For any $A,B\subseteq E$ such that $A\sqcup B$ is an independent set and every $q \in \pi_U^{-1}(\bigcap_{i\in A}H_i)$,
	we set
	\[\oeta_{A,B}^U(q):=(-1)^{\ell({A,B})}\oomega_A^U(q) \psi^U_B.\]
\end{definition}

Recall from \Cref{prop:rel_unimod} that given a circuit $C$, for every $B \subset C$ we set $c_B= \prod_{i \in B} c_i$.

\begin{theorem} \label{thm:RelCirc}
	Let $E= C \sqcup F$ as above and let $L$ be a connected component of $\cap_{i \in E}H_i$. We have
	\begin{equation}
		\sum_{j\in C} \sum_{
			\substack{
				A,B\subset E\\
				A \supseteq F \\
				C= A \sqcup B \sqcup \{j\}\\
				\vert B \vert \textnormal{ even}\\
				W\supseteq L
		}} 
		(-1)^{|A_{\leq j}|} c_B \frac{m(A)}{m(A \cup B)} \oeta_{W,A,B}  = 0.
	\end{equation}
\end{theorem}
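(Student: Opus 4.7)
The plan is to pull the desired relation back to the unimodular covering $\pi_U:U\to T$ built in the previous section and derive it from the already-established unimodular circuit relation (Proposition~\ref{prop:rel_unimod}) together with the pullback formula of Lemma~\ref{lem:pull_back_oeta}. By Lemma~\ref{lem:unimodularity}, the circuit $C$ of $\A_U$ has a minimal dependency $\sum_{i\in C}c_i\,\frac{\chi_i}{a_i}=0$ with $c_i\in\{\pm 1\}$, and a short computation using that $a_i\,m(C\setminus\{i\})$ is independent of $i\in C$ shows these signs coincide with those appearing in the statement. Hence Proposition~\ref{prop:rel_unimod}, applied to the unimodular forms $\oomega_i^U(q)$ and $\psi_i^U$ for any $q\in\pi_U^{-1}(p)$ with $p$ a generic point of $L$, gives
\[
\sum_{j\in C}\sum_{\substack{A',B'\subset C\\ C=A'\sqcup B'\sqcup\{j\}\\|B'|\text{ even}}}(-1)^{|A'_{\le j}|}\,c_{B'}\,\oeta^U_{A',B'}(q)=0.
\]

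Next, I would multiply this identity by $\oomega_F^U(q)$ and sum over $q\in\pi_U^{-1}(p)$. Since $F$ consists of coloops and all indices of $F$ come after all indices of $C$ in the fixed total order, $\oomega_{A'}^U(q)\,\oomega_F^U(q)=\oomega_{A'\cup F}^U(q)$ with no sign, while moving $\oomega_F^U(q)$ past $\psi^U_{B'}$ produces the sign $(-1)^{|F||B'|}$. Setting $A:=A'\cup F$, the crucial sign bookkeeping is the identity $\ell(A,B')=\ell(A',B')+|F||B'|$, which combines the two signs into the single $(-1)^{\ell(A,B')}$ hidden inside $\oeta^U_{A,B'}(q)$. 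Since $j\in C$ precedes all elements of $F$, also $|A_{\le j}|=|A'_{\le j}|$. Moreover, $A\sqcup B'\sqcup\{j\}=E$ is maximal independent, so the hypotheses of Lemma~\ref{lem:pull_back_oeta} are met. The relation becomes
\[
\sum_{j\in C}\sum_{\substack{A,B\subset E\\ A\supseteq F\\ E=A\sqcup B\sqcup\{j\}\\|B|\text{ even}}}(-1)^{|A_{\le j}|}\,c_B\,\sum_{q\in\pi_U^{-1}(p)}\oeta^U_{A,B}(q)=0.
\]

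Finally, for each term I would apply Lemma~\ref{lem:pull_back_oeta} to the unique connected component $W$ of $\bigcap_{i\in A}H_i$ containing $L$. The lemma packages the inner sum precisely as
\[
\sum_{q\in\pi_U^{-1}(p)}\oeta^U_{A,B}(q)=\frac{m(A)}{m(A\cup B)}\,\pi_U^{*}(\oeta_{W,A,B}),
\]
so the whole expression equals $\pi_U^*$ of the sum appearing in the statement of the theorem. Injectivity of $\pi_U^*$ on forms closes the argument; the indexing condition $W\supseteq L$ in the theorem is automatic since for each $A$ there is a unique component $W$ of $\bigcap_{i\in A}H_i$ that contains the connected set $L$.

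The main obstacle will be the sign and index bookkeeping: verifying that $(-1)^{|A'_{\le j}|}c_{B'}$ in the unimodular relation matches $(-1)^{|A_{\le j}|}c_B$ after enlarging $A'$ by $F$, and that the $(-1)^{\ell(A',B')}$ coming from $\oeta^U_{A',B'}(q)$ together with the $(-1)^{|F||B'|}$ from commuting past $\oomega_F^U(q)$ precisely reconstructs the $(-1)^{\ell(A,B)}$ built into $\oeta^U_{A,B}(q)$. Everything else is a clean application of Proposition~\ref{prop:rel_unimod} and Lemma~\ref{lem:pull_back_oeta}.
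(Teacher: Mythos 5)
Your proof matches the paper's proof almost verbatim: apply the unimodular circuit relation~\eqref{eq:relazione_bar} in $\A_U$ at each $q\in\pi_U^{-1}(p)$, multiply on the right by $\oomega_F^U(q)=\oeta^U_{F,\emptyset}(q)$ to absorb the coloops, sum over $q$, invoke Lemma~\ref{lem:pull_back_oeta} (equation~\eqref{eq:oeta_new}) to recognize each inner sum as $\frac{m(A)}{m(A\cup B)}\pi_U^*(\oeta_{W,A,B})$, and conclude by injectivity of $\pi_U^*$. Your explicit sign bookkeeping (showing $\ell(A'\sqcup F,B')=\ell(A',B')+|F|\,|B'|$ and that $|A_{\le j}|=|A'_{\le j}|$ because $F$ comes after $C$ in the fixed order) is a spelled-out version of the paper's one-line identity $\oeta^U_{A,B}(q)\,\oeta^U_{F,\emptyset}(q)=(-1)^{|F_{\le j}|+\ell(C,F)}\oeta^U_{A\sqcup F,B}(q)$, and is correct.
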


\begin{proof}
	Now we fix a point $p \in L \subseteq \bigcap_{i\in E} H_i$ and we use relation~\eqref{eq:relazione_bar} in $\A_U$ for the circuit $C$. This gives us, for every $q \in \pi_U^{-1} (p)$, 
	\[ 
	\sum_{j\in C} \sum_{
		\substack{
			A,B\subset C\\
			C= A \sqcup B \sqcup \{j\}\\
			\vert B \vert \textnormal{ even}
	}}
	(-1)^{|A_{\leq j}|} \oeta^U_{A,B}(q) c_B = 0.
	\]
	We multiply this equation by $\oeta^U_{F, \emptyset}(q)$, and using the equality $\oeta^U_{A,B}(q)\oeta^U_{F, \emptyset}(q) = (-1)^{|F_{\leq j}|+\ell(C,F)} \oeta^U_{A\sqcup F,B}(q)$ we obtain:
\[
\sum_{j\in C} \sum_{
	\substack{
		A,B\subset E\\
		A \supseteq F \\
		E= A \sqcup B \sqcup \{j\}\\
		\vert B \vert \textnormal{ even}
}}
(-1)^{|A_{\leq j}|} \oeta^U_{A,B}(q) c_B = 0.
\]
	Summing over all $q \in \pi_U^{-1} (p)$, we get
	\begin{align*}
		0 & =\sum_{q \in \pi_U^{-1} (p)}
		\sum_{j\in C} \sum_{
			\substack{
				A,B\subset E\\
				A \supseteq F \\
				E= A \sqcup B \sqcup \{j\}\\
				\vert B \vert \textnormal{ even}
		}}
		(-1)^{|A_{\leq j}|} c_B  \oeta^U_{A,B}(q)\\
		& =
		\sum_{j\in C} \sum_{
			\substack{
				A,B\subset E\\
				A \supseteq F \\
				E= A \sqcup B \sqcup \{j\}\\
				\vert B \vert \textnormal{ even}
		}}
		(-1)^{|A_{\leq j}|} c_B\sum_{q \in \pi_U^{-1} (p)} \oeta^U_{A,B}(q)  \\
		& = 
		\sum_{j\in C} \sum_{
			\substack{
				A,B\subset E\\
				A \supseteq F \\
				E= A \sqcup B \sqcup \{j\}\\
				\vert B \vert \textnormal{ even}\\
				W\supseteq L
		}}
		(-1)^{|A_{\leq j}|}  c_B \frac{m(A)}{m(A \cup B)} \pi^*_U(\oeta_{W,A,B}),
	\end{align*}
	where in the last equality we use eq.~\eqref{eq:oeta_new}.
	Since $\pi_U^*$ is an injective algebra homomorphism, we obtain the claimed equality.
\end{proof}

We now drop the assumption that the arithmetic matroid has a unique circuit and we go back to the general set-up of any arrangement $\A$ in a torus $T$.

\begin{remark}
We have constructed a unimodular covering for an arrangement with only one circuit.
Notice that this construction cannot be done globally, i.e.\ for a general toric arrangement.
Indeed consider the central toric arrangement $\A$ in $(\C^*)^2$ described by the matrix
\[ \left( \begin{array}{cccc}
1 & 0 & 2 & 1 \\ 
0 & 1 & 1 & 3
\end{array} \right),\]
does not exist a covering $f \colon U \to (\C^*)^2$ such that the toric arrangement $\A_U = \bigcup_{H \in \A} \pi_0(f^{-1}(H))$ is unimodular.
\end{remark}

\begin{theorem} \label{thm:main_rational}
	Let $\A$ be an essential arrangement. 
	The rational cohomology algebra $H^*(M(\A),\mathbb Q)$ is isomorphic to the algebra $\mathcal E$ with 
	\begin{itemize}
		\item[--] Set of generators $e_{W,A;B}$, where $W$ ranges over all layers of $\A$, $A$ is a set generating $W$ and $B$ is disjoint from $A$ and such that $A\sqcup B$ is an independent set;
		the degree of the generator $e_{W,A;B}$ is $\vert A \sqcup B\vert$.
		\item[--] The following types of relations
		\begin{itemize}
			\item For any two generators $e_{W,A;B}$, $e_{W',A';B'}$,  
			\[e_{W,A;B} e_{W',A';B'}=0\]
			 if $ A \sqcup B \sqcup A' \sqcup B' $  is a dependent set, and otherwise 
			\begin{equation}\label{eq:relazione_prodotto}
				e_{W,A;B}e_{W',A';B'}= 
				(-1)^{\ell(A\cup B,A'\cup B')} 
				\sum_{L\in \pi_0(W\cap W')}
				e_{L,A\cup A';B\cup B'}.
			\end{equation}
\item For every linear dependency $\sum_{i\in E} n_i \chi_i=0$ with $n_i\in \mathbb Z$, a relation
\begin{equation}\label{eq:relazione_toro}
\sum_{i\in E} n_i e_{T,\emptyset;\{i\} } =0.
\end{equation}
			
\item  For every subset $X\subseteq E$ where $\rk(X)=\vert X \vert -1$ write $X=C\sqcup F$ with $C$ the unique circuit in $X$. Consider the  associated (unique) linear dependency $\sum_{i\in C} n_i\chi_i=0$ with $n_i\in \mathbb Z$, and for every connected component $L$ of $\cap_{i \in X} H_i$ a relation
\begin{equation}\label{eq:relazione_final}
\sum_{j\in C} \sum_{
					\substack{
						A,B\subset X\\
						A\supseteq F\\
						X= A \sqcup B \sqcup \{j\}\\
						\vert B \vert \textnormal{ even}\\
						W\supseteq L
				}}
				(-1)^{|A_{\leq j}|} c_B  \frac{m(A)}{m(A \cup B)} e_{W,A;B} = 0
			\end{equation}
			where, for all $i\in C$, $c_i:=\operatorname{sgn}n_i$, $c_B = \prod_{i \in B}c_i$.
			
		\end{itemize}
	\end{itemize}     
\end{theorem}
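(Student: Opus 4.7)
The plan is to define a homomorphism $\Phi \colon \mathcal E \to H^*(M(\A); \Q)$ by $\Phi(e_{W,A;B}) := [\oeta_{W,A,B}]$, with $\oeta_{W,A,B}$ as in \Cref{df:oetaWAB}, and prove it is an isomorphism.

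The first step is to verify that $\Phi$ is well defined, namely that the defining relations of $\mathcal E$ are satisfied by the forms $\oeta_{W,A,B}$. The product relations \eqref{eq:relazione_prodotto}, together with the vanishing of products indexed by dependent sets, are the content of \Cref{rem:prodrel} (and of \Cref{lem:prodsum}). The torus relations \eqref{eq:relazione_toro} hold because every $\psi_i$ is pulled back from $T$ and is a representative of the class of $\chi_i$ in $H^1(T;\Q)\simeq \Q\otimes_\Z \Lambda$, so that $\sum_{i\in E} n_i \psi_i$ is the logarithmic derivative of $\sum_i n_i \chi_i = 0$, hence identically zero as a form. The circuit relations \eqref{eq:relazione_final} are exactly the statement of \Cref{thm:RelCirc}, applied to each subset $X = C \sqcup F\subseteq E$ of corank one (the uniqueness of the circuit inside $X$ justifies applying the theorem verbatim).

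The next step is surjectivity of $\Phi$. Here I would work with the Leray filtration $\FF$ of \Cref{def:filtrazioneleray} and its associated graded decomposition \eqref{eq:decompositiongraded}. By \Cref{lem:graduato}, the image of $\oeta_{W,A,B}$ in $\gr_{|A|}H^*(M(\A))$ equals
\[
(-1)^{\ell(B,A)}\, 2^{|A|}\, \psi_B \otimes e_A \in H^{|B|}(W) \otimes H^{|A|}(M(\A[W])).
\]
As $A$ ranges over nbc-bases of $\A[W]$ the classes $e_A$ form a basis of $H^{|A|}(M(\A[W]))$ by Orlik--Solomon, and allowing $B$ to range over subsets of $E$ whose characters, restricted to $W$, span $H^{|B|}(W)$ exhausts the second factor. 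Thus the generators $\oeta_{W,A,B}$ surject onto every graded piece, and a standard filtration argument yields surjectivity of $\Phi$.

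Finally, injectivity follows from a dimension count. Using the product and torus relations, every element of $\mathcal E$ can be rewritten as a linear combination of $e_{W,A;B}$ with $A \sqcup B$ an independent set; using the circuit relations \eqref{eq:relazione_final} one reduces further to those generators in which $A$ is an nbc-basis of $\A[W]$, and $B$ is a representative modulo the span of $A$ in the character lattice of $W$. A careful count shows this spanning set has cardinality $\sum_{L} |\nbc_{\operatorname{codim}(L)}(\A[L])|\cdot \dim H^*(L)$, matching $\dim_\Q H^*(M(\A);\Q)$ as computed in \Cref{lm:poincare}. The main obstacle is precisely this combinatorial reduction: one must show that the relations \eqref{eq:relazione_final} indexed by all corank-one sets $X$, and separately at each connected component $L$ of the intersection $\bigcap_{i\in X}H_i$, suffice to bring any generator into nbc-form without introducing new linear relations beyond those controlled by the Poincaré polynomial of the torus $W$. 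In the unimodular case this collapses to the classical Orlik--Solomon rewriting, but in general the bookkeeping of the multiplicities $m(A)/m(A\cup B)$ across different components $W\supseteq L$ is what makes the reduction match the Betti number count.
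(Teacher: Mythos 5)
Your plan coincides with the paper's proof in its essential structure: the same map $\Phi(e_{W,A;B}) := [\oeta_{W,A,B}]$, well-definedness checked against \Cref{rem:prodrel}, the pullback property of $\psi_i$ from $T$, and \Cref{thm:RelCirc}, and the comparison with the associated graded of the Leray filtration via \Cref{lem:graduato}. However, there is a genuine gap at the point you yourself flag as ``the main obstacle'': you assert that the circuit relations \eqref{eq:relazione_final} allow one to reduce every generator to $e_{W,A;B}$ with $A$ an nbc-set of $\A[W]$, and then invoke an unperformed ``careful count'' against the Poincar\'e polynomial. The reduction is not a formality and must be exhibited. The paper does this with a straightening argument: if $A$ contains a broken circuit $C\setminus\min(C)$, apply \eqref{eq:relazione_final} with $X = C\cup A$; this expresses $e_{W,A;B}$ as a linear combination of generators $e_{W',A';B'}$ with either $|A'|<|A|$, or $|A'|=|A|$ and $A'$ lexicographically larger, so induction terminates at nbc-sets. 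Without this (or an equivalent) explicit rewriting, the assertion that the circuit relations suffice is unjustified.

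A secondary but real simplification you miss: once the reduction to nbc $A$ (and, via \eqref{eq:relazione_prodotto} and \eqref{eq:relazione_toro}, to $B\subseteq D(A)$ for a fixed completion $D(A)$ to a matroid basis) is available, there is no need for a separate dimension count against \Cref{lm:poincare}. By \Cref{lem:graduato} these reduced generators map under $\gr\Phi$ onto elements $\psi_B\otimes e_A$ that already form a basis of $\gr_k H^*(M(\A);\Q)$ (Orlik--Solomon for each $\A[W]$ plus the decomposition \eqref{eq:decompositiongraded}). Since the reduced generators span $\mathcal E$ and are sent bijectively to a basis of the associated graded, $\Phi$ is automatically bijective. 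Your approach of a numerical count would work in principle, but it duplicates information already contained in \Cref{lem:graduato} and adds a bookkeeping burden (the interplay of multiplicities $m(A)/m(A\cup B)$ you mention) that the paper's argument avoids entirely.
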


\begin{proof}
	
	Consider the map
	\[
	\Phi: \mathcal E \to H^*(M(\A),\mathbb Q),\quad
	e_{W,A;B} \mapsto [\oeta_{W,A,B}].
	\]
	This map is well-defined -- in fact, in the cohomology ring Equation \eqref{eq:relazione_prodotto} holds by \Cref{rem:prodrel}, Equation \eqref{eq:relazione_toro} already holds in the cohomology of the ambient torus, and Equation \eqref{eq:relazione_final} holds by \Cref{thm:RelCirc}. 
	
We claim that each generator $e_{W,A;B}$ can be expressed as linear combination of generators $e_{W',A';B'}$ with $A'$ a no-broken-circuit for the arrangement $\A[W']$. Indeed if $A$ contains a broken-circuit $A_1 \subseteq A$, i.e.\ $A_1=C \setminus \min (C)$ for a circuit $C$, consider the relation \eqref{eq:relazione_final} for $X= C\cup A$: it expresses the element $e_{W,A;B}$ as a linear combination of some $e_{W',A';B'}$ with $|A'|<|A|$ or with $|A'|=|A|$ and $A'$ lexicographically bigger than $A$. We have inductively proved the claim.
	
	Now fix, for every independent $A\subseteq E$, a subset $D(A)\subseteq E$ such that $A\sqcup D(A)$  is a basis of the matroid.
Then, notice that relations \eqref{eq:relazione_prodotto} and  \eqref{eq:relazione_toro} allow us to express every generator $e_{W,A;B}$, with $A$ no-broken-circuit, in terms of generators $e_{W,A;B}$ where $B$ is a subset of $D(A)$.
	Then, with \Cref{lem:graduato}, the $k$-th graded part of the image of $\Phi$ equals $\operatorname{gr}_k H^*(M(\A),\mathbb Q).$ We conclude that $\Phi$ is bijective, hence it defines the desired isomorphism.
\end{proof}
\begin{remark}
	The relations in the presentation above hold for differential forms and not only for their cohomology classes. As a consequence the space $M(\A)$ is rationally formal. This fact has been already observed by \cite{DeConciniProcesiToricArr} for unimodular arrangements and proved by \cite{DupontFormality2016} in general.
\end{remark}

\begin{remark}\label{combdata} 
	Notice that all relations of type \eqref{eq:relazione_toro} are implied by those associated with minimal linear dependencies (i.e., circuits). 
	
	Moreover, the above presentation is completely encoded in the datum of the poset of layers of $\A$ (needed, e.g., for Relations \eqref{eq:relazione_prodotto}, \eqref{eq:relazione_final}) and in the (relative) sign pattern of the minimal linear dependencies. But by \cite[Theorem 3.12]{Pagaria2017} (see \Cref{rm:cipl}), the latter can also be recovered by the poset.

	The complements of the two toric arrangements constructed in the already quoted paper \cite{pagaria2} (see \Cref{rem:matroid_vs_poset})
	turn out to have non-isomorphic cohomology rings. Since the two arrangements have isomorphic matroids, this implies that the cohomology ring cannot be determined purely in terms of the arithmetic matroid.
\end{remark}

\begin{example} \label{es:formulebar} We can provide a presentation of the rational cohomology of the complement of arrangement  $\exA$.
	
	The cohomology ring is generated by:
	\begin{align*}
		\oomega_0=& \frac{1}{2 \pi \sqrt{-1}}\dlog\left(\frac{(1-x^3y)^2}{x^3y}\right),& \psi_0= &\frac{1}{2 \pi \sqrt{-1}} \dlog(x^3y),\\
		\oomega_1=& \frac{1}{2 \pi \sqrt{-1}}\dlog\left(\frac{(1-y)^2}{y}\right),&
		\psi_1= & \frac{1}{2 \pi \sqrt{-1}} \dlog(y),\\
		\oomega_2=& \frac{1}{2 \pi \sqrt{-1}}\dlog\left(\frac{(1-x)^2}{x}\right),&  \psi_2= & \frac{1}{2 \pi \sqrt{-1}} \dlog(x)
	\end{align*}
	and
	\begin{align*}
		\oomega_{p, \{0,1\}} = & \frac{-1}{4\pi^2}\frac{x^3 y^2+x^3y +4x^2y+4xy+y +1}{xy \left(y-1\right) \left(x^3y-1\right)} \dd x \dd y,\\
		\oomega_{q, \{0,1\}} = & \frac{-1}{4\pi^2}\frac{x^3 y^2+x^3y +4\zeta_3^2 x^2y+4\zeta_3xy+y +1}{xy \left(y-1\right) \left(x^3y-1\right)} \dd x \dd y,\\
		\oomega_{r, \{0,1\}} = & \frac{-1}{4\pi^2}\frac{x^3 y^2+x^3y +4\zeta_3x^2y+4\zeta_3^2xy+y +1}{xy \left(y-1\right) \left(x^3y-1\right)} \dd x \dd y
	\end{align*}
	where $\zeta_3 = e^{\frac{2 \pi i }{3}}$.
	The relations are
	\begin{align*}
		\oomega_i \psi_i =& 0 \qquad \forall \, i, \\
		\oomega_0 \oomega_1 =& \oomega_{p, \{0,1\}} + \oomega_{q, \{0,1\}} + \oomega_{r, \{0,1\}},\\
		-\psi_0 + \psi_1 + 3 \psi_2 =& 0,\\
		\oomega_{p, \{0,1\}} - \oomega_0 \oomega_2 + \oomega_1 \oomega_2 = & - \psi_1 \psi_2 + \psi_0\psi_2 +\frac{1}{3} \psi_0 \psi_1
	\end{align*}
	where the last relation can be verified checking the equalities
	\begin{align*}
		& \frac{x^3 y^2+x^3y +4x^2y+4xy+y +1}{xy \left(y-1\right) \left(x^3y-1\right)} \dd x \dd y -
		\frac{x^3y+1}{y(x^3y-1)} 
		\frac{x+1}{x(x-1)}   \dd y \dd x +\\
		&
		+ \frac{y+1}{y(y-1)}\frac{x+1}{x(x-1)} \dd 
		y \dd x 
		=	\frac{\dd x \dd y}{xy} = \\
		& =  - \dlog (y) \dlog (x) +  \dlog (x^3y) \dlog (x) +{\frac{1}{3}} \dlog (x^3y) \dlog (y) .
	\end{align*}
\end{example}

\begin{example}
Consider the central toric arrangement $\mathcal{C}$ in $T= (\C^*)^3$ given by the four hypertori $H_1=\{x=1\}$, $H_2=\{y=1\}$, $H_3=\{xy=1\}$, and $H_4=\{xy^{-1}z^3=1\}$.
The zero-dimensional layers are the points $p=(1,1,1)$, $q=(1,1, \zeta_3)$, and $r=(1,1,\zeta_3^2)$.
Let $W$ be the layer $H_1 \cap H_2 \cap H_3$.
The relations given by \eqref{eq:relazione_final} are the following:
\begin{align*}
& \oomega_{W,\{1,2\}}-\oomega_{W,\{1,3\}}+\oomega_{W,\{2,3\}} + \psi_{1}\psi_{2} - \psi_{1}\psi_{3} - \psi_{2}\psi_{3} = 0 \\
& \oomega_{s,\{1,2,4\}}-\oomega_{s,\{1,3,4\}}+\oomega_{s,\{2,3,4\}} + \frac{1}{3}\psi_{1}\psi_{2}\oomega_4 - \frac{1}{3}\psi_{1}\psi_{3}\oomega_4 - \frac{1}{3}\psi_{2}\psi_{3}\oomega_4 =0
\end{align*}
for all $s=p,q,r$.
Notice that the relation in degree two does not imply the relations in degree three.
\end{example}

\section{Integral cohomology}\label{sec:integer}

\begin{proposition}
	The forms $\omega_{W,A}$ are integral forms.
\end{proposition}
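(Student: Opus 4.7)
My plan is to reduce to a unimodular covering, where integrality is classical, and then to transport the property back along the covering.

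First I would invoke Proposition~\ref{ex:sepind} to pick a covering $f\colon U\to T$ that separates $A$ and such that $\A_U$ is unimodular. Thanks to Remark~\ref{rem:fstaromega}, the definition of $\omega_{W,A}=\omega^f_{W,A}$ then reads
\[
f^*\omega_{W,A}\;=\;\sum_{L\in\pi_0(f^{-1}(W))}\omega^U_A(q_L),
\]
where $q_L\in L$ is any chosen point; here we use that in the unimodular case every layer of $\A_U$ above $W$ meets a given fibre $f^{-1}(p)$ in the same number of points, so that the normalising factor $|L\cap f^{-1}(p)|^{-1}$ from Definition~\ref{def:oomegaf} cancels the multiplicity with which each layer $L$ contributes to $\sum_{q\in f^{-1}(p)}\omega^U_A(q)$.

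Next I would observe that on $U$ each form $\omega_i^U(q)=\frac{1}{2\pi\sqrt{-1}}\dd\log\bigl(1-e^{\widehat{\chi}_i/a_i-\widehat{\chi}_i/a_i(q)}\bigr)$ is the classical logarithmic $1$-form associated with a single primitive character defining $H_i^U(q)$. Such a form is closed, has a single simple pole of residue $1$ along $H_i^U(q)$ and is holomorphic elsewhere, so it represents the Poincar\'e dual of $[H_i^U(q)]$, an integer cohomology class on $M(\A_U)$. Since cup products of integer classes are integer, each $\omega^U_A(q_L)$, and therefore also $f^*\omega_{W,A}$, lies in $H^{|A|}(M(\A_U),\Z)$.

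The last step is the descent from $U$ to $T$. By \cite{dAntonioDelucchiJEMS} the integer cohomology $H^*(M(\A),\Z)$ is torsion-free, so it injects into $H^*(M(\A),\C)$; in particular, checking integrality amounts to identifying a class with an element of this integer lattice. Using the Leray filtration $\FF_\bullet$ of Definition~\ref{def:filtrazioneleray} together with the decomposition~\eqref{eq:decompositiongraded}, the image of $[\omega_{W,A}]$ in $\gr_{|A|}$ can be computed by the analogue of Lemma~\ref{lem:graduato} for $\omega$ in place of $\oomega$ (dropping the factor $2^{|A|}$): it equals $1\otimes e_A$, an integer generator of $H^0(W,\Z)\otimes H^{|A|}(M(\A[W]),\Z)$. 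The proof then concludes by induction on $|A|$: the base case $|A|=0$ is trivial, and in the inductive step the integrality of the top graded piece together with the integrality of $f^*\omega_{W,A}$ forces the lower-filtration correction to $[\omega_{W,A}]$ to be an integer combination of classes $[\omega_{W',A'}]$ with $|A'|<|A|$, which are integer by induction. The hardest part will be this descent, since a priori a rational class on $T$ whose pullback to $U$ is integer need not itself be integer; the extra control is supplied by torsion-freeness, by the integrality of the graded pieces in \eqref{eq:decompositiongraded}, and by the explicit identification of the top graded piece via Lemma~\ref{lem:graduato}.
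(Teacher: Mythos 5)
Your first part — lifting to a unimodular covering $f\colon U\to T$ separating $A$, observing that the normalising factor $|L\cap f^{-1}(p)|^{-1}$ turns the sum over $f^{-1}(p)$ into a sum over layers $L$ above $W$, and concluding that $f^*\omega_{W,A}=\sum_L\omega_A^U(q_L)$ is a sum of products of classical integer logarithmic $1$-forms — is correct and exactly what the paper does. The gap is in the descent. You yourself note that a rational class with integer pullback along a finite cover need not be integer (indeed, if $f$ has degree $d$ then $f_*f^*=d\cdot\mathrm{id}$, so one only obtains a $\frac{1}{d}$-integer conclusion from the pullback alone). Your proposed remedy — torsion-freeness, integrality of the graded pieces of \eqref{eq:decompositiongraded}, the $\omega$-analogue of Lemma~\ref{lem:graduato}, and an induction on $|A|$ — does not actually close this gap: after subtracting an integer class $\alpha$ with the correct image $1\otimes e_A$ in $\gr_{|A|}$, you are left needing to show that $\beta:=[\omega_{W,A}]-\alpha\in\FF_{|A|-1}$ is integer, and for this you only know that $f^*\beta$ is integer and that $\gr_{|A|-1}$ is torsion-free. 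This is the same problem one filtration step down; the induction does not terminate in a way that resolves it, and nothing in the listed ingredients supplies the divisibility by the covering degree that is needed.

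The paper's argument avoids this descent entirely, by two moves you are missing. First, it treats the case where $W$ is a point, so $|A|=n$ and $\omega_{W,A}$ is a top-degree form; then, since $H^n(M(\A);\Z)$ is torsion-free, integrality is equivalent to having integer periods, and one computes directly that
\[
\int_S\omega_{W,A}=\frac{1}{\deg f}\int_{f^{-1}(S)}\sum_{q\in f^{-1}(p)}\omega^U_A(q)=\int_{f^{-1}(S)}\omega^U_A(q_0)\in\Z,
\]
because all the integrals $\int_{f^{-1}(S)}\omega^U_A(q)$ coincide (deck transitivity) and $|f^{-1}(p)|=\deg f$ exactly cancels the prefactor when $W$ is a point. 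Second, for $W$ of positive dimension the paper does not try to go \emph{up} a cover and then descend; it goes \emph{down} a torus quotient $\pi_{W_0}\colon T\to T'=T/W_0$ along the identity component $W_0$ of $W$, identifies $\omega_{W,A}=\pi_{W_0}^*(\omega'_{W',A})$ with $W'$ a point in $T'$, and uses that pullback along $\pi_{W_0}$ preserves integrality. If you want to keep your filtration-based plan you will need to explain precisely how the required divisibility is obtained at each inductive step, or else adopt the paper's two-step reduction.
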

\begin{proof}
	We first prove our statement in the case when $W$ is a point, hence $\vert A \vert =n$ and $\omega_{W,A}$ is a $n$-form. 
	
	We will prove that for any integral cycle $S \in H_{n}(M(\A); \Z)$ the integral \[\int_S \omega_{W,A}\] is an integer number. From the Universal Coefficients Theorem this implies that $\omega_{W,A}$ is an integral form.
	
	Now, let $f: U \to T$ be any covering that separates $A$ and such that the arrangement $\A_U$ is unimodular. For any cycle $S \in H_{n}(M(\A); \Z) $ we have
	\begin{align*}
		\int_S \omega_{W,A} & = \frac{1}{\deg f} \int_{f^{-1}(S)} f^*(\omega_{W,A})  \\
		& = \frac{1}{\deg f} \int_{f^{-1}(S)}\sum_{q \in f^{-1}(W) } \omega^U_A(q)
	\end{align*}
	where the last equality follows from Remark \ref{rem:fstaromega}.
	
	Now we can observe that the integral
	\[
	\int_{f^{-1}(S)} \omega^U_A(q)
	\]
	does not depend on the point $q \in f^{-1}(W)$. Moreover, since $W$ is a point, $\deg f = | f^{-1}(W)|$. We thus have
	\[
	\int_S \omega_{W,A} = \int_{f^{-1}(S)} \omega^U_A(q)
	\]
	for any point $q \in f^{-1}(W)$.
	Since the arrangement $\A_U$ is unimodular we have (see \eqref{eq:prodotti_omega}) $\omega^U_A(q) = \prod_{i \in A} \omega_i^U(q).$  By definition \eqref{eq:def_omega_U} each factor $\omega_i^U(q)$ is an integer form.	
	Hence integrality of $\omega^U_A(q)$ implies integrality of $\omega_{W,A}$.

	In the general case let $W_0$ be the translate of $W$ containing the identity of $T$. We can consider the projection $\pi_{W_0}: T \to T'$, where $T' = T/{W_0}$. The $W_0$-invariant characters $\chi_i$ for $i \in A$ induce characters $\chi'_i$ of $T'$, defining hypertori $H_i'= \pi_{W_0}(H_i) \subseteq T'$. Let $W'= \pi_{W_0}(W)$ be the component of $\bigcap_{i \in A} H_i'$ corresponding to $W$ and consider the associated form $\omega'_{W',A}$ on $T'$. Then $\omega_{W,A} = \pi^*_{W_0}(\omega'_{W',A})$ and so integrality of $\omega_{W,A}$ follows from integrality of $\omega'_{W',A}$, which is granted because $W'$ has dimension $0$.
\end{proof}

\begin{proposition}
	For any independent set $A\sqcup B \subset E$ and for any layer $W$ in $\cap_{i \in A}H_i$ the form $\frac{m(A)}{m(A \sqcup B)}\eta_{W,A,B}$ is integral.
\end{proposition}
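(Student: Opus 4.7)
The plan is to parallel the previous proposition: show that every integer period of $\tfrac{m(A)}{m(A\sqcup B)}\eta_{W,A,B}$ is an integer, then conclude via the Universal Coefficients Theorem. Writing $X:=A\sqcup B$, the natural choice is the covering $f\colon U\to T$ built in the proof of Proposition~\ref{ex:sepind} for the independent set $X$; it has $a_i=m(X)$ for every $i\in X$ and $\deg f = m(X)^{|X|-1}$. Since $\{\chi_i/m(X)\}_{i\in X}$ can be completed to a basis of $\Lambda(X)$, the subcollection $\{\chi_i/m(X)\}_{i\in A}$ is primitive; hence each $\bigcap_{i\in A}H_i^U(q_0)$ is a connected subtorus of $U$ and $f$ also separates $A$, so the formulas of Definition~\ref{def:oomegaf} apply with this $f$.

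A count along the lines of Lemma~\ref{lem:WAE}, dividing $\deg f$ by the number $m(X)^{|A|}/m(A)$ of connected components of $f^{-1}(W)$, yields
\[
N(A,W) := \bigl|\textstyle\bigcap_{i\in A}H_i^U(q_0)\cap f^{-1}(p)\bigr| = m(A)\,m(X)^{|B|-1}
\]
for any $p\in W$. Combining this with $f^*\psi_B = m(X)^{|B|}\psi_B^U$ and the analogue of Definition~\ref{def:oomegaf} for $\omega_{W,A}$ gives
\[
f^*(\eta_{W,A,B}) = (-1)^{\ell(A,B)}\,\frac{m(X)}{m(A)}\sum_{q\in f^{-1}(p)}\omega_A^U(q)\,\psi_B^U.
\]

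For any integer cycle $S\in H_{|A|+|B|}(M(\A);\Z)$ I would then observe that $\int_{f^{-1}(S)}\omega_A^U(q)\psi_B^U$ is independent of $q\in f^{-1}(p)$: the forms $\psi_i^U$ are translation-invariant and $\tau^*\omega_i^U(q)=\omega_i^U(\tau^{-1}q)$ for every deck transformation $\tau$ (as computed just before Definition~\ref{def:oomegaf}), and $\tau$ preserves $f^{-1}(S)$, so the change-of-variables formula gives the claim. Summing over the $\deg f$ preimages of $p$ therefore cancels the factor $1/\deg f$ in $\int_S \eta_{W,A,B}=(\deg f)^{-1}\int_{f^{-1}(S)}f^*\eta_{W,A,B}$, leaving
\[
\int_S \tfrac{m(A)}{m(X)}\,\eta_{W,A,B} \;=\; (-1)^{\ell(A,B)}\int_{f^{-1}(S)}\omega_A^U(q_0)\,\psi_B^U,
\]
which is an integer: both $\omega_i^U(q_0)$ and $\psi_i^U$ are logarithmic forms attached to the primitive characters $\chi_i/m(X)$ of $U$ and hence define integral classes on $M(\A_U)$, so their product does too. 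The Universal Coefficients Theorem then yields the claim exactly as in the previous proposition. The delicate point is identifying $N(A,W)$ so that the normalization produced by the pull-back is exactly $m(X)/m(A)$: this is arranged precisely by separating all of $X$ at once, and no reduction to $\dim W=0$ is needed since the Galois averaging already cancels $\deg f$.
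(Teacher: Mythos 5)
Your proof is correct, and it takes a genuinely different route from the paper's. The paper argues by pure linear algebra on lattices: it picks a basis $\{b_1,\dots,b_{|A|}\}$ of $\Lambda^A$, completes it to a basis of $\Lambda^{A\sqcup B}$, and notes that the change-of-basis matrix $M$ expressing $\psi_j$ in terms of $v_i = \tfrac{1}{2\pi\sqrt{-1}}\dd\log(e^{b_i})$ is block upper triangular; since $\omega_{W,A}\,v_i = 0$ for $i\le |A|$ (because each such $v_i$ is a rational combination of $\psi_j$ with $j\in A$), only the lower-right block $M_3$ contributes, and $\det(M_3) = m(A\sqcup B)/m(A)$ gives the normalization immediately, with integrality then inherited from $\omega_{W,A}$ (the previous proposition) and the $v_i$. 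You instead extend the period argument of the previous proposition: build the covering that separates $X = A\sqcup B$, check it also separates $A$, compute the fiber count $N(A,W)$ so that the pull-back acquires exactly the factor $m(X)/m(A)$, and use Galois equivariance to average away $\deg f$, leaving an honest integer period on $M(\A_U)$. Both are valid. The paper's proof is shorter and more algebraic, and it is the argument that identifies the constant $m(A\sqcup B)/m(A)$ as a determinant of an integer matrix (which also shows, as a bonus, that $m(A)\mid m(A\sqcup B)$). Your proof is more self-contained — it re-derives integrality from scratch rather than citing the preceding proposition — and it makes the origin of the normalizing factor geometrically transparent. One small clarification worth adding to your argument: the verification that $f$ also separates $A$ should include the observation that $f$ surjects onto each component $W$ of $\cap_{i\in A}H_i$, so every $W$ is hit by the image of some $\cap_{i\in A}H_i^U(q_i)$; this follows because the restriction of the finite covering $f$ over $W$ is still a finite covering, hence surjective. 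Also, the Universal Coefficients step relies on $H_*(M(\A);\Z)$ being torsion-free, a fact the paper invokes from the minimality result of d'Antonio--Delucchi; it is worth flagging that dependency explicitly.
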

\begin{proof}
	Let $\mathfrak{A} = \{b_1, \ldots, b_{|A|} \}$ be a basis of $\Gamma^A$. We complete it to a basis $\mathfrak{A} \cup \mathfrak{B} = \{b_1, \ldots, b_{|A|+|B|}\}$ of $\Gamma^{A \sqcup B}$. We can define the forms \[v_j:= \frac{1}{2 \pi \sqrt{-1}}\dd \log(e^{b_j}).\]
	Hence we can consider the square matrix  $M=(m_{ij})$ such that for every $j \in A \sqcup B$ we have that \[\psi_j = \sum_{i=1}^{|A|+|B|} m_{ij} v_j.\]
	The matrix $M$ is a
	block matrix of the form
	\[
	M = \left(
	\begin{matrix}
	M_1 & M_2 \\0 & M_3
	\end{matrix}
	\right)
	\]
	with $M_1$ a $|A| \times |A|$ matrix and $M_3$ a $|B| \times |B|$ matrix.
	For $j > |A|$ we have that $\omega_{W,A} \psi_j = \omega_{W,A} \sum_{i=|A|+1}^{|A|+|B|} m_{ij} v_j$, i.e., using only entries of $M_3$.
	Hence 
	\[\eta_{W,A,B} = \pm \omega_{W,A} \prod_{j = |A|+1}^{|A|+|B|} \psi_j = \pm \det(M_3) \omega_{W,A}  \prod_{j = |A|+1}^{|A|+|B|}  v_j.
	\]
	Since $\det(M_3) = \det(M)/\det(M_1) = m(A \sqcup B)/m(A)$
	we have that  $\frac{m(A)}{m(A \sqcup B)}\eta_{W,A,B}$ is an integral form.
\end{proof}

Recall the filtration $\FF$ of $ H^*(M(\A))$ introduced in \Cref{def:filtrazioneleray}:
\[ \FF_i H^*(M(\A);\Z)  := \bigoplus_{j \leq i} H^j(M(\A);\Z) \cdot H^*(T;\Z). \]
From Definition \ref{def:omega_oomega_psi} we have that
\[
[\oomega_i] = 2[\omega_i] \mbox{ in } \FF_1/\FF_0 H^*(M(\A); \Z)
\]
and
\begin{equation}\label{eq:omegaoomega}
	[\oomega_{W,A}] = 2^{|A|}[\omega_{W,A}] \mbox{  in } \FF_{|A|}/\FF_{|A|-1} H^*(M(\A); \Z)
\end{equation}

\begin{definition}
	The $\Z$-algebra  $R \subset \Omega^*(M(\A))$ is the subalgebra generated by the closed forms  $\omega_{W,A} \alpha$, where $W$ runs among the layers of $\cap_{i \in A} H_i$ for $A$ independent and $\alpha \in H^*(T; \Z)$.
\end{definition}

\begin{theorem}\label{thm:main_integral}
	Let $\A$ be an essential toric arrangement. The integral cohomology ring of $M(\A)$ is isomorphic to the algebra $R$:
	\[R \simeq H^*(M(\A); \Z).\]
	In particular the space $M(\A)$ is formal.
\end{theorem}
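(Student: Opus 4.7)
The plan is to study the tautological map $\phi\colon R \to H^*(M(\A); \Z)$ sending a closed form to its de Rham cohomology class. Every element of $R$ is closed since $R$ is a subalgebra generated by closed forms, and the target lies in integral cohomology by the two integrality propositions above. Once $\phi$ is shown to be an isomorphism, integral formality follows automatically: $R$ then sits inside $\Omega^*(M(\A))$ as a sub-dga with zero differential quasi-isomorphic to the full de Rham complex, which is the very definition of formality.

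For surjectivity I would use the Leray filtration $\FF$ of \Cref{def:filtrazioneleray}. The map $\phi$ preserves $\FF$ and hence induces $\gr\phi$ between the associated graded pieces. The graded of the target is described by \eqref{eq:decompositiongraded} as
\[
\gr_k H^*(M(\A);\Z) = \bigoplus_{\operatorname{codim}W = k} H^*(W;\Z) \otimes H^k(M(\A[W]);\Z).
\]
Combining \eqref{eq:omegaoomega} with \Cref{lem:graduato}, one sees that $[\omega_{W,A}]$ maps to $1\otimes e_A \in H^0(W;\Z)\otimes H^{|A|}(M(\A[W]);\Z)$, and more generally $[\omega_{W,A}\alpha]$ maps to $\alpha|_W \otimes e_A$. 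The classical integral Orlik-Solomon algebra $H^*(M(\A[W]);\Z)$ is a free $\Z$-module with basis indexed by nbc-sets, which are in particular independent; therefore in each degree $k$ the monomials $e_A$ with $A$ independent of size $k$ already span the module. Moreover, since $W$ is a translate of the intersection of kernels of primitive characters, its underlying sublattice of $\pi_1(T)$ is saturated, whence the restriction $H^*(T;\Z)\to H^*(W;\Z)$ is surjective. It follows that $\gr\phi$ surjects onto each summand of \eqref{eq:decompositiongraded}; a standard induction on the filtration degree gives surjectivity of $\phi$.

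For injectivity I would pass to $\phi\otimes\Q$. Using $\oomega_i = 2\omega_i - \psi_i$ and the pushforward descriptions of \Cref{s:coverings}, the form $\oomega_{W,A} - 2^{|A|}\omega_{W,A}$ lies in $\FF_{|A|-1}$ at the level of forms (not merely of cohomology classes); inductively on $|A|$ this shows that every $\oomega_{W,A}$ belongs to $R\otimes\Q$. Hence $R\otimes\Q$ contains the rational subalgebra generated by the $\oomega_{W,A}$ and the $\psi_i$, which by \Cref{thm:main_rational} is carried isomorphically by $\phi\otimes\Q$ onto $H^*(M(\A);\Q)$. Combined with surjectivity of $\phi$ and the torsion-freeness of both $R$ (as a sub-$\Z$-module of a $\C$-vector space) and $H^*(M(\A);\Z)$ (by \cite{dAntonioDelucchiJEMS}), a rank comparison forces $\phi$ to be an isomorphism. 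The main obstacle will be the form-level identity $\oomega_{W,A} - 2^{|A|}\omega_{W,A} \in \FF_{|A|-1}$ in the non-unimodular case: this requires expanding the separating-cover definition of $\oomega_{W,A}$ and tracking the telescoping of lower-filtration contributions, rather than just appealing to the class-level equation \eqref{eq:omegaoomega}.
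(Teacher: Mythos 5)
Your proposal is essentially correct and follows the same strategy as the paper's proof: both rely on the finite Leray filtration $\FF$, the graded decomposition \eqref{eq:decompositiongraded}, the rational isomorphism of \Cref{thm:main_rational}, and torsion-freeness of the integral Orlik--Solomon algebra. The organizations differ a bit. The paper first notes that $i\colon R\to H^*(M(\A);\Z)$ is injective (this follows because the form-level relations imply that the $\Q$-subalgebra of forms generated by the $\oeta_{W,A,B}$ maps injectively, indeed isomorphically, to $H^*(M(\A);\Q)$); it then proves $\gr(i)$ surjective using the identity $2^{|A|}[\omega_{W,A}]=[\oomega_{W,A}]=2^{|A|}(1\otimes e_A)_W$ and cancels $2^{|A|}$ in the torsion-free graded module. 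You instead prove surjectivity first (by the same graded analysis) and get injectivity via a $\Q$-rank comparison.

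The one genuine gap you flag yourself is real but minor, and the paper resolves it: for your injectivity step you need $R\otimes\Q$ to coincide with the $\Q$-subalgebra generated by the $\oomega_{W,A}$ and the $\psi_i$, and you propose to see this through the form-level containment $\oomega_{W,A}-2^{|A|}\omega_{W,A}\in\FF_{|A|-1}$ together with an induction. This is exactly the content of \Cref{prop:relazioni_bar_senzabar} (stated immediately after the theorem), which expands $\oeta_{W,A,B}$ as a $\Q$-linear combination of $\eta_{L,A\setminus C,B\cup C}$ and, being triangular, can be inverted. The paper's argument sidesteps the need to carry out that inversion inside the proof by cancelling the factor $2^{|A|}$ at the graded level and using strictness of the filtered injection $i$, which is slightly cleaner; but your route is sound once you supply that expansion.
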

\begin{proof}
	Since the relations given in the presentation of \Cref{thm:main_rational} are equalities between differential forms, the map  $i:R \hookrightarrow H^*(M(\A); \Z)$ sending each form to its cohomology class is an injective map of filtered modules. 
	In particular it induces an homomorphism
	\[
	\gr(i):\gr(R) \to \gr (H^*(M(\A); \Z)) 
	\]
	of graded modules. We claim that the map  $\gr(i)$ is an isomorphism. Since 
	the strictly filtered map $i$ is injective, $\gr(i)$ is injective too. 
	
	We will prove that $\gr(i)$ is also surjective.
	As seen in \Cref{eq:decompositiongraded}, the graded algebra decomposes as a direct sum
	\[
	\gr_k (H^*(M(\A)))
	= \bigoplus_{
		\substack{W\in \mathcal L(\A)\\
			\operatorname{codim}(W) = k
	}}
	H^*(W) \otimes H^k(M(\A[W]));\]
	moreover the summand $H^*(W) \otimes H^k(M(\A[W]))$ is generated, as a $H^*(T;\Z)$-module, by the elements 
	$
	1 \otimes e_A
	$ for $A$ independent set such that $W$ is a connected component of $\bigcap_{i \in A} H_i$. From \Cref{eq:omegaoomega} and \Cref{lem:graduato} we have
	\[
	2^{|A|} [\omega_{W,A}] = [\oomega_{W,A}] = 2^{|A|}(1 \otimes e_A)_W
	\]
	where the inclusion of $H^*(W) \otimes H^k(M(\A[W]))$ in $\gr_k (H^*(M(\A)))$ is understood.

	Since the integral cohomology ring of the complement of an hyperplane arrangement is torsion free \cite{OrlikSolomonOriginal}, it follows  that the algebra $\gr_k (H^*(M(\A)))$ is torsion free.
	
	For every layer $W$ and every set of indices $A$  the element \[(1 \otimes e_A)_W \in \gr_k (H^*(M(\A)))\] is the image of $\omega_{W,A}$ and hence $\gr(i)$ is surjective. 
	Since $\gr(i)$ is an isomorphism, the claim follows.
\end{proof}

\begin{proposition}\label{prop:relazioni_bar_senzabar}
	The generators $\eta_{W,A,B}$ can be expressed in terms of the generators of the ring $R = H^*(M(\A); \Z)$ as follows:
	\[
	\oeta_{W,A,B} = \sum_{C \subseteq A}  (-1)^{|C|}2^{|A \setminus C|}\frac{m(A \setminus C)}{m(A)} \eta_{L,A \setminus C, B \cup C}
	\]
	where $L$ is the unique connected component of $\cap_{i \in A \setminus C} H_i$ such that $W \subset L$.
\end{proposition}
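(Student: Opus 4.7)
The plan is to pull both sides back to a well-chosen covering and use the pointwise identity $\oomega_i^U(q) = 2\omega_i^U(q) - \psi_i^U$ to expand the product. Concretely, I would choose a finite covering $f\colon U\to T$ that separates $A$ and such that the lifted arrangement $\A_U$ is unimodular (for instance, by refining the construction of \Cref{ex:sepind} if needed). Since $\pi_U^*$ is injective, it suffices to verify the claimed identity after pullback to $M(\A_U)$.

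On the cover, \Cref{rem:fstaromega} expresses $\pi_U^*(\oomega_{W,A})$ as a normalized sum of the products $\oomega_A^U(q) = \prod_{i\in A}\oomega_i^U(q)$ over $q\in f^{-1}(p)$, where $p$ is any point of $W$. Expanding each factor as $\oomega_i^U(q) = 2\omega_i^U(q)-\psi_i^U$ and reordering the resulting factors yields
\[
\oomega_A^U(q) = \sum_{C\subseteq A}(-1)^{|C|}\,2^{|A\setminus C|}\,(-1)^{\ell(A\setminus C,C)}\,\omega_{A\setminus C}^U(q)\,\psi_C^U.
\]
Multiplying by $\pi_U^*(\psi_B) = \bigl(\prod_{i\in B}a_i\bigr)\psi_B^U$ gives a decomposition of $\pi_U^*(\oeta_{W,A,B})$ indexed by $C\subseteq A$. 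I would then identify each $C$-summand with the pullback of the corresponding term on the right-hand side, applying \Cref{rem:fstaromega} to similarly express $\pi_U^*(\omega_{L(C),A\setminus C})$, where $L(C)$ is the unique component of $\cap_{i\in A\setminus C}H_i$ containing $W$.

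Matching the two $C$-summands term by term splits into two checks. The sign check amounts to the identity
\[
(-1)^{\ell(A,B)+\ell(A\setminus C,C)+\ell(C,B)} = (-1)^{\ell(A\setminus C,B\cup C)},
\]
which is immediate from the additivity relations $\ell(A,B)=\ell(A\setminus C,B)+\ell(C,B)$ and $\ell(A\setminus C,B\cup C)=\ell(A\setminus C,B)+\ell(A\setminus C,C)$.

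The main obstacle is the coefficient check, which reduces to proving
\[
\frac{N_{L(C),A\setminus C}}{N_{W,A}} = \frac{m(A\setminus C)}{m(A)}\prod_{i\in C}a_i,
\]
where $N_{W',A'}$ denotes the size of $L'\cap f^{-1}(p')$ for any component $L'$ of $f^{-1}(W')$ and any $p'\in W'$. I would establish this via the auxiliary formula $N_{W',A'} = \deg(f)\cdot m(A')\big/\prod_{i\in A'}a_i$, which follows by counting the components of $f^{-1}(\cap_{i\in A'}H_i)$ in two ways: unimodularity of $\A_U$ ensures that each nonempty intersection of chosen components $\cap_{i\in A'}H_i^U(q_i)$ is connected, giving $\prod_{i\in A'}a_i$ components distributed equally under the deck action among the $m(A')$ layers of $\cap_{i\in A'}H_i$, and dividing $\deg(f)$ by the number of components of $f^{-1}(W')$ gives the stated formula. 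Substituting this into the ratio produces exactly the desired factor, which completes the term-by-term matching.
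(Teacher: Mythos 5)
Your argument is essentially the paper's own proof: both pull back via a covering that separates $A$, both expand $\oomega_A^U(q)=\prod_{i\in A}(2\omega_i^U(q)-\psi_i^U)$ over subsets $C\subseteq A$, and both match the $C$-indexed terms against the pullbacks of $\eta_{L,A\setminus C,B\cup C}$. The only real difference is bookkeeping: the paper invokes \Cref{lem:pull_back_oeta} (hence \Cref{lem:WAE}) to package the coefficient $\tfrac{m(A\cup B)}{m(A)}$ once and for all, while you re-derive the ratio $N_{L(C),A\setminus C}/N_{W,A}=\tfrac{m(A\setminus C)}{m(A)}\prod_{i\in C}a_i$ from the fiber-counting formula $N_{W',A'}=\deg(f)\,m(A')/\prod_{i\in A'}a_i$. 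Your sign identity
\[
(-1)^{\ell(A,B)+\ell(A\setminus C,C)+\ell(C,B)}=(-1)^{\ell(A\setminus C,B\cup C)}
\]
is correct and is implicit in the paper's computation.

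One small imprecision: you ask for a covering under which the whole lifted arrangement $\A_U$ is unimodular. As the paper itself warns in the remark following \Cref{thm:RelCirc}, such a global covering need not exist. Fortunately, you do not need it: your counting argument for $N_{W',A'}$ only concerns the hypertori indexed by subsets of $A$, and the covering of \Cref{ex:sepind} (with $a_i=m(A)$ for $i\in A$) already has the property that every choice of components from the $f^{-1}(H_i)$, $i\in A'$, intersects in a single connected component for every $A'\subseteq A$, because the characters $\frac{\chi_i}{m(A)}$, $i\in A$, span a direct summand of $\Lambda(A)$. So simply replace ``$\A_U$ unimodular'' by ``the restriction of $\A_U$ to $A$ is unimodular,'' which the covering of \Cref{ex:sepind} guarantees, and the rest of your argument goes through verbatim.
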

\begin{proof}	
	Take any covering $f: U \to T$ that separates $A$, e.~g. the one given in \Cref{ex:sepind}. From \Cref{def:omega_oomega_psi}, 
	\Cref{def:oomegaf}, 
	\Cref{lem:pull_back_oeta} and \Cref{def:omega_WA} it follows that
	\begin{align*}
		f^*(\oeta_{W,A,B}) & = (-1)^{\ell(A,B)} \frac{m(A \cup B)}{m(A)} \sum_{q \in f^{-1}(p)} \oomega_{A}^U(q) \psi^U_B = \\
		& =  (-1)^{\ell(A,B)} \frac{m(A \cup B)}{m(A)} \sum_{q \in f^{-1}(p)} \prod_{i \in A} (2 \omega_i^U(q) - \psi_i^U) \psi^U_B =
		\\
		& =  \frac{m(A \cup B)}{m(A)} \sum_{
			\substack{
				q \in f^{-1}(p)\\
				C \subseteq A
			}
		} 
		(-1)^{\ell(A,B) +\ell(A \setminus C,C)+|C|}2^{|A \setminus C|} \omega_{A \setminus C}^U(q) \psi_{C}^U \psi_B^U=\\
		&= 
		f^*\left(
		\sum_{C \subseteq A}
		(-1)^{|C|} 2^{|A \setminus C|}
		\frac{m(A \setminus C)}{m(A)}
		\eta_{L,A \setminus C, B \cup C}
		\right)
	\end{align*}
	where $L$ is the unique connected component of $\cap_{i \in A \setminus C} H_i$ containing $W$. The equality follows from the injectivity of the pull-back map.
\end{proof}

\begin{example}
	For the arrangement $\exA$ the previous relation gives the following presentations for the integral cohomology. We can take as generators the forms
	\begin{align*}
		\omega_0=& \frac{1}{2 \pi \sqrt{-1}}\dlog(1-x^3y),& \psi_0= &\frac{1}{2 \pi \sqrt{-1}} \dlog(x^3y),\\
		\omega_1=& \frac{1}{2 \pi \sqrt{-1}}\dlog(1-y),&
		\psi_1= & \frac{1}{2 \pi \sqrt{-1}} \dlog(y),\\
		\omega_2=& \frac{1}{2 \pi \sqrt{-1}}\dlog(1-x),&  \psi_2= & \frac{1}{2 \pi \sqrt{-1}} \dlog(x)
	\end{align*}
	and
	\begin{align*}
		\omega_{p, \{0,1\}} = & \frac{-1}{4\pi^2}
		\frac{x^2 y+x +1}{\left(y-1\right) \left(x^3y-1\right)} \dd x \dd y,\\
		\omega_{q, \{0,1\}} = & \frac{-\zeta_3}{4\pi^2}\frac{\zeta_3^2x^2 y+\zeta_3x +1}{\left(y-1\right) \left(x^3y-1\right)} \dd x \dd y,\\
		\omega_{r, \{0,1\}} = & \frac{-\zeta_3^2}{4\pi^2}\frac{\zeta_3x^2 y+\zeta_3^2x +1}{\left(y-1\right) \left(x^3y-1\right)} \dd x \dd y
	\end{align*}
	and we have the equivalent for the relations obtained for the rational cohomology:
	\begin{align*}
		\omega_i \psi_i =& 0 \qquad \forall \, i, \\
		\omega_0 \omega_1 =& \omega_{p, \{0,1\}} + \omega_{q, \{0,1\}} + \omega_{r, \{0,1\}},\\
		-\psi_0 + \psi_1 + 3 \psi_2 =& 0,\\
		\omega_{p, \{0,1\}} - \omega_0 \omega_2 + \omega_1 \omega_2 = &
		-\omega_0 \psi_2.
	\end{align*}
	As an example of application of \Cref{prop:relazioni_bar_senzabar} we can write the relation
	\[
	\oomega_{p,\{0,1\}} = 4 \omega_{p, \{0,1\}} - \frac{2}{3} (\psi_0 \omega_1 + \omega_0 \psi_1) + \frac{1}{3} \psi_0 \psi_1,
	\]
	that can be also checked directly using the formulas above and the formulas in \Cref{es:formulebar}.
\end{example}

\bibliography{biblio_tos}{}
\bibliographystyle{amsalpha}
\end{document}